\newcommand{\sslash}{\mathbin{/\mkern-6mu/}}
\newcommand{\bd}{\frac{\de}{(N+1)L^N}}
\newcommand{\N}{\mathbb{N}}
\newcommand{\Z}{\mathbb{Z}}
\newcommand{\R}{\mathbb{R}}
\newcommand{\C}{\mathbb{C}}
\renewcommand{\H}{\mathbb{H}}
\newcommand{\I}{\mathcal{I}}
\newcommand{\U}{\mathcal{U}}
\newcommand{\p}{P}
\newcommand{\s}{S}
\newcommand{\D}{\mathcal{D}}
\newcommand{\De}{\Delta}
\newcommand{\Ga}{\Gamma}
\newcommand{\La}{\Lambda}
\newcommand{\al}{\alpha}
\newcommand{\be}{\beta}
\newcommand{\de}{\delta}
\newcommand{\ep}{\epsilon}
\newcommand{\vS}{\varSigma}
\newcommand{\ga}{\gamma}
\newcommand{\la}{\lambda}
\newcommand{\si}{\sigma}
\newcommand{\ad}{a_{\mathrm{mod}}}
\newcommand{\sid}{{\sigma_{\mathrm{mod}}}}
\newcommand{\tad}{{\tau_{\mathrm{mod}}}}
\newcommand{\ef}{\mathcal E}
\newcommand{\Homeo}{\mathrm{LHomeo}}
\newcommand{\Diff}{\mathrm{Diff}}
\newcommand{\Hom}{\mathrm{Hom}}
\newcommand{\Isom}{\mathrm{Isom}}
\newcommand{\PGL}{\mathrm{PGL}}
\newcommand{\PSL}{\mathrm{PSL}}
\newcommand{\PSO}{\mathrm{PSO}}
\newcommand{\GL}{\mathrm{GL}}
\newcommand{\SL}{\mathrm{SL}}
\newcommand{\SO}{\mathrm{SO}}
\newcommand{\Aff}{\mathrm{Aff}}
\newcommand{\Hit}{\mathrm{Hit}}
\newcommand{\Ba}{\mathrm{Bar}}
\newcommand{\Lip}{\mathrm{Lip}}
\newcommand{\Flag}{\mathrm{Flag}}
\newcommand{\intr}{\mathrm{int}\,}
\newcommand{\Code}{\mathrm{Code}}
\newcommand{\Ray}{\mathrm{Ray}}
\newcommand{\End}{\mathrm{End}}
\newcommand{\pa}{\partial_\infty}
\newcommand{\id}{e}
\newcommand{\ov}{\overline}
\newcommand{\wt}{\widetilde}
\newcommand{\diam}{\mathrm{diam}}
\newcommand{\card}{\mathrm{card}}
\newcommand{\ost}{\mathrm{ost}}
\newcommand{\st}{\mathrm{st}}
\numberwithin{equation}{section}
\theoremstyle{plain}
\newtheorem{theorem}[equation]{Theorem}
\newtheorem{corollary}[equation]{Corollary}
\newtheorem{lemma}[equation]{Lemma}
\newtheorem{proposition}[equation]{Proposition}
\newtheorem*{claim*}{Claim}
\newtheorem*{theorem*}{Theorem}
\theoremstyle{definition}
\newtheorem{definition}[equation]{Definition}
\newtheorem{example}[equation]{Example}
\newtheorem{question}[equation]{Question}
\newtheorem{conjecture}[equation]{Conjecture}
\theoremstyle{remark}
\newtheorem{remark}[equation]{Remark}
\title{Structural stability of meandering-hyperbolic group actions}
\author{Michael Kapovich, Sungwoon Kim and Jaejeong Lee}
\date{}
\begin{document}

\maketitle

\begin{abstract}
In his 1985 paper Sullivan sketched a proof of his structural stability theorem for differentiable group actions satisfying certain expansion-hyperbolicity axioms. In this paper we relax Sullivan's axioms and introduce a notion of \emph{meandering hyperbolicity} for group actions on geodesic metric spaces. This generalization is substantial enough to encompass actions of certain non-hyperbolic groups, such as actions of \emph{uniform lattices} in semisimple Lie groups on flag manifolds. At the same time, our notion is sufficiently robust and we prove that meandering-hyperbolic actions are still structurally stable. We also prove some basic results on meandering-hyperbolic actions and give other examples of such actions.
\end{abstract}

\tableofcontents

\section{Introduction}

In \cite{Sul} Sullivan stated his structural stability theorem for group actions satisfying certain expansion-hyperbolicity axioms in the $C^1$-setting. Its main application was to the stability for convex-cocompact Kleinian groups, but this particular result of Sullivan appears to have been relatively unknown (see Remark~\ref{rem:sul}) presumably due to the lack of a detailed proof.

The goal of this paper is two-fold. First, we generalize Sullivan's result in two directions and provide a detailed proof along the way. We work with locally bi-Lipschitz group actions on geodesic metric spaces. We then weaken Sullivan's axioms to what we call \emph{meandering hyperbolicity}. This condition still implies the structural stability in the sense of Sullivan even though the groups need \emph{not} be word-hyperbolic. Second, we establish some basic properties of meandering-hyperbolic actions and explore various examples of such actions, most unexpected among which are actions of \emph{uniform lattices} in semisimple Lie groups on flag manifolds. As for word-hyperbolic groups, other interesting examples include Anosov actions on flag manifolds as well as actions with invariant subsets non-homeomorphic to Gromov boundaries.

We now explain Sullivan's structural stability theorem in more detail and exhibit the key idea of our generalization.

\begin{theorem*}[\cite{Sul}*{\S9. Theorem~II}]
Consider a group action $\Ga\to\Diff^1(M)$ on a Riemannian manifold $M$ with a compact invariant subset $\La\subset M$. If the action satisfies the expansion-hyperbolicity axioms, then it is structurally stable in the sense of $C^1$-dynamics.
\end{theorem*}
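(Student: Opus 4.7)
The plan is to construct the equivariant homeomorphism $\phi\colon\La\to\La'$ via a symbolic coding and shadowing argument that is robust under $C^1$-small perturbation, in the spirit of classical proofs of structural stability of hyperbolic invariant sets for a single diffeomorphism. The strategy has three parts: encode points of $\La$ by combinatorial itineraries read off from an expanding open cover; show that the same itineraries are realized by unique points of $M$ under any sufficiently close perturbation; and verify that the resulting correspondence is an equivariant homeomorphism onto a $\Ga'$-invariant compact set.

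First I would unpack the expansion-hyperbolicity axioms into working data: a finite open cover $\{U_i\}$ of $\La$ and, for each index $i$, a group element $\ga_i\in\Ga$ whose action on a neighborhood of $U_i$ is uniformly expanding with rate $\la>1$ in the Riemannian metric. Given $x\in\La$, compactness and expansion together produce an itinerary $(i_0,i_1,\ldots)$ with $(\ga_{i_{n-1}}\cdots\ga_{i_0})x\in U_{i_n}$ for all $n\ge 1$. The corresponding sequence of local inverses of the $\ga_{i_n}$ is a uniform contraction, and $x$ is recovered as the unique fixed point of the associated nested intersection of closed balls.

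Next I would push this coding through to the perturbed action. Uniform expansion on a compact set with a definite constant is an open condition in $\Diff^1(M)$, so for $\rho$ sufficiently $C^1$-close to the identity embedding, each $\rho(\ga_i)$ is still expanding on a slightly shrunk neighborhood of $U_i$ with rate close to $\la$. For $x\in\La$ with itinerary $(i_n)$, I define $\phi(x)$ to be the unique point of $M$ whose $\rho$-itinerary is $(i_n)$, obtained as the fixed point of the corresponding nested contractions in the perturbed system; a Banach fixed-point argument produces a point $C^0$-close to $x$ at a rate controlled by the size of the perturbation. Continuity of $\phi$ follows from uniformity of the contraction rates (it is a uniform limit of continuous maps on $\La$), equivariance from shift-equivariance of the coding, and injectivity from the hyperbolicity clause of the axioms, which forces the itineraries of any two distinct points of $\La$ eventually to separate them at a definite scale. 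The image $\La':=\phi(\La)$ is then a compact $\Ga'$-invariant subset of $M$.

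The main obstacle I expect is the mismatch between the combinatorial coding and the underlying geometry: the open cover is not a Markov partition, so itineraries are neither unique nor subject to a clean admissibility condition, and one must check that the fixed-point construction depends only on $x$ itself and not on the particular choice of itinerary. This forces one to tune the expansion constant, the diameters of the sets in the cover, their Lebesgue number relative to $\La$, and the size of the admissible $C^1$-perturbation to be mutually compatible. Upgrading the continuous equivariant bijection $\phi\colon\La\to\La'$ to a homeomorphism close to the identity is a further matter of verifying openness and proximity, and it is precisely this circle of quantitative compatibilities that the paper's \emph{meandering} relaxation of Sullivan's axioms is designed to accommodate robustly.
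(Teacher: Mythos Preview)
Your proposal is correct and follows essentially the same approach as the paper: encode points of $\La$ by itineraries with respect to an expanding cover, define $\phi(x)$ as the unique point in the nested intersection of the same cover sets under the perturbed action, and verify equivariance, continuity, and injectivity. You have also correctly identified well-definedness of $\phi$ (independence of the chosen itinerary) as the crux, which is precisely where the paper invokes the hyperbolicity/meandering-hyperbolicity condition; the only minor divergence is that the paper deduces injectivity not directly from separation of itineraries but from equivariance together with the a priori bound $d(x,\phi(x))<\de/2$ and the expansivity constant of the original action.
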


Here the \emph{expansion} axiom means that for all $x\in\La$ there exists $g\in\Ga$ such that $\|g'(x)\|>1$ with respect to the Riemannian metric. We generalize this condition accordingly for locally bi-Lipschitz actions on geodesic metric spaces (Definition~\ref{def:expansion}). For each point $x\in\La$ the expansion condition enables us to choose a sequence (called a \emph{code}) $\al$ in a finite generating set of $\Ga$, along with the corresponding \emph{ray} $c^\al:\N_0\to\Ga$ in (the Cayley graph of) the group $\Ga$ (Definitions~\ref{def:code} and \ref{def:ray}).

Sullivan's hyperbolicity axiom, which we refer to as \emph{S-hyperbolicity} (with the letter S standing for Sullivan), is an additional requirement to the expansion axiom, stating that any two rays associated to a point $x\in\La$ are within Hausdorff distance $N$ from each other for some uniform constant $N>0$ (Definition~\ref{def:shyp}), which is a form of the \emph{fellow-traveling condition}. Under this condition, Sullivan's theorem asserts that the action $\rho:\Ga\to\Diff^1(M)$ is \emph{structurally stable in the sense of $C^1$-dynamics}, that is, for every action $\rho'$ sufficiently $C^1$-close to $\rho$, there exists a $\rho'$-invariant compact subset $\La'\subset M$ and an equivariant homeomorphism $\phi:\La\to\La'$. The S-hyperbolicity condition plays a critical role in defining the image $\phi(x)$ unambiguously, regardless of the choice made for codes $\al$.

The key point in our generalization is the simple observation that such a fellow-traveling property of rays in the S-hyperbolicity condition is too strong a requirement and that much more relaxed equivalence relation between rays would still suffice for the existence of the map $\phi$ (Section~\ref{sec:well}). Namely, given two rays for a point in $\La$, we require that they are Hausdorff-close only on infinite subsets of each. Furthermore, we allow interpolation of such a relation by other rays. The equivalence relation generated in this way is the main feature of our generalization of S-hyperbolicity, which we call the \emph{meandering hyperbolicity} condition (Definition~\ref{def:mhyp}). See Remark~\ref{rem:N-equiv}(b) for our reasoning behind the nomenclature.

With this relaxed definition we generalize Sullivan's structural stability theorem as follows. Let $\Homeo(M)$ denote the group of locally bi-Lipschitz homeomorphisms of $M$.

\begin{theorem}[Theorem~\ref{thm:main}(1)]\label{thm:main1}
Consider an action $\Ga\to\Homeo(M)$ of a finitely generated group $\Ga$ on a proper geodesic metric space $M$ with a compact invariant subset $\La\subset M$, no point of which is isolated in $M$. If the action is meandering-hyperbolic, then it is structurally stable in the sense of Lipschitz dynamics.
\end{theorem}

\noindent See Theorem~\ref{thm:main} for the full statement. We say that an action $\rho:\Ga\to\Homeo(M)$ is \emph{structurally stable in the sense of Lipschitz dynamics} if, for every action $\rho':\Ga\to\Homeo(M)$ sufficiently close to $\rho$ with respect to the compact-open Lipschitz topology (see Section~\ref{sec:hyp->stability}), there exists a $\rho'$-invariant subset $\La'\subset M$ along with a topological conjugacy $\phi:\La\to\La'$.

This generalization of Sullivan's theorem is general enough to encompass actions of certain \emph{non-hyperbolic} groups as we exhibit below. Its proof will take the entire Section~\ref{sec:proof}. We mostly follow Sullivan's idea of proof, filling in the details he sometimes left out, except in Section~\ref{sec:well}, where the meandering hyperbolicity condition comes in and plays a crucial role, and in Sections~\ref{sec:expand} and \ref{sec:uhyp}, which are not covered in his paper.

\medskip
We investigate various examples of expanding and meandering-hyperbolic actions. Most significantly, in Section~\ref{sec:ulattice}, we establish the following result regarding \emph{uniform lattices} of semisimple Lie groups:

\begin{theorem}[Theorem~\ref{thm:ulattice} and Corollary~\ref{cor:ulattice}]\label{thm:main2}
Suppose that $G$ is a semisimple Lie group, $P< G$ is a parabolic subgroup and $\Ga< G$ is a uniform lattice. Then
\begin{enumerate}[label=\textup{(\arabic*)},nosep,leftmargin=*]
\item
the $\Ga$-action on the flag manifold $G/P$ is (uniformly) meandering-hyperbolic;
\item
consequently, the action is structurally stable in the sense of Lipschitz dynamics.
\end{enumerate}
\end{theorem}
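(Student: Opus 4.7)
The plan is to focus on part~(1), since part~(2) is an immediate consequence of Theorem~\ref{thm:main1}. The task is thus to verify the axioms of Definition~\ref{def:mhyp} for the $\Ga$-action on $\La := G/P$, with the whole flag manifold playing the role of the invariant compact ``limit set'' (no point of which is isolated). Throughout I would exploit two structural features: the contraction--expansion dynamics of regular semisimple elements of $G$ on $G/P$, and the cocompactness of $\Ga$ in $G$, which allows one to replace any element of $G$ by an element of $\Ga$ up to a bounded error in a compact factor.

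\textbf{Expansion.} For each $\xi \in G/P$ I would produce an element $\ga_\xi \in \Ga$ that expands a uniform neighborhood of $\xi$, as follows. Pick $a \in A$ lying sufficiently deep in the positive Weyl chamber of type corresponding to $P$ so that $a$ expands points near its repelling flag by an arbitrarily large uniform factor. Choose $k \in K$ so that the conjugate $g_\xi := k a k^{-1}$ has repelling flag equal to $\xi$, and hence expands a neighborhood of $\xi$. By cocompactness, $G = \Ga \cdot C$ for some compact $C \subset G$; write $g_\xi = \ga_\xi c_\xi$. Since elements of $C$ act on $G/P$ by uniformly bi-Lipschitz maps, $\ga_\xi$ itself expands a uniform neighborhood of $c_\xi \cdot \xi$. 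Compactness of $G/P$ then yields a finite expansion cover.

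\textbf{Hyperbolicity.} The substantive axiom is the meandering condition. Because $\Ga$ is uniform, the orbit map $\Ga \to G$ is a quasi-isometric embedding, so any word of length $n$ in a fixed finite generating set of $\Ga$ is represented by a group element whose Cartan projection $\mu(\cdot)$ has norm $\asymp n$. I would then use the standard contraction estimate via the $KAK$ decomposition: an element $g \in G$ with $\mu(g)$ deep in the Weyl chamber contracts the complement of a small neighborhood of its repelling flag into a small neighborhood of its attracting flag at an exponential rate. Following Sullivan's model, iterated backward compositions of the expanding elements $\ga_\xi^{-1}$ produce a coding tree of nested ``cells'' shrinking to points of $\La$; the meandering hyperbolicity axiom then amounts to verifying that two distinct ends of the tree are separated at a uniform exponential rate, up to a bounded transverse ``meander'' in the Weyl-chamber directions orthogonal to the singled-out simple roots.

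\textbf{The main obstacle.} The principal difficulty is precisely what makes the theorem non-trivial: in higher rank, $\Ga$ is not word-hyperbolic, and geodesics in the symmetric space $G/K$ genuinely drift across the Weyl chamber, so no fellow-traveller principle of the sort underpinning S-hyperbolicity is available. This is exactly what forces the relaxation from S- to meandering hyperbolicity, and the bulk of the work will be a quantitative control of this transverse drift so as to fit the parameters of Definition~\ref{def:mhyp}. Uniformity of the meandering constants (as asserted in Theorem~\ref{thm:ulattice}) should then follow from the fact that all the estimates above depend only on finitely many data on a fundamental domain for $\Ga$ in $G$, together with uniform bi-Lipschitz control on the compact factor $C$.
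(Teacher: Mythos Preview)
Your expansion argument is close to the paper's (which uses \cite{KLP17}*{Theorem~2.41} together with $D$-density of the orbit $\Ga x$ in $X=G/K$). The meandering-hyperbolicity part, however, has a genuine gap. What must be shown is that for each fixed $\tau$, any two coding rays $c^\al,c^\be\in\Ray_\tau(\D,\eta)$ are connected by a finite chain of interpolating coding rays in the sense of Definition~\ref{def:N-equiv}. Your sketch proposes no mechanism for producing such interpolating rays; ``quantitative control of transverse drift'' is not a construction, and in rank $\ge2$ two coding rays for the same $\tau$ can diverge unboundedly in the Cayley graph, so no fellow-traveller bound of the kind you gesture at is available.

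The paper's mechanism is geometric. First, coding rays project to uniform \emph{Morse quasigeodesics} in $X$ with common endpoint $\tau$ (Lemma~\ref{lem:CodingMorse}), via the uniform $\tad$-regularity forced by the exponential expansion of Lemma~\ref{lem:nest} together with \cite{KLP17}*{Theorem~2.41}. Second, given two Morse quasigeodesic rays with the same endpoint $\tau$, one constructs a single $\tad$-Finsler geodesic ray $r$ inside the Weyl cone $V(x,\st(\tau))$ that alternately meets both (Lemma~\ref{lem:interpolation}). Finally, one approximates $r$ by lattice elements via $D$-density and verifies that the resulting sequence is itself a coding ray for $\tau$ with respect to a \emph{refined} expansion datum $\D_\eta\succ\D$, obtained by enlarging $\vS$ with a controlled finite set of long $\Theta''$-regular elements; this is precisely where the refinement $\D_0\prec\D$ in Definition~\ref{def:mhyp} is genuinely used. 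A single interpolation step then suffices: $c^\al\approx^0 c^\ga\approx^0 c^\be$. Neither the Morse property of coding rays nor the Finsler-geodesic interpolation appears in your outline, and these are the heart of the argument.
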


\noindent While stability of the action is consistent with various other rigidity properties of uniform lattices, the hyperbolic nature of the actions of higher rank uniform lattices on flag manifolds is a new phenomenon: higher rank uniform lattices are traditionally regarded as having no hyperbolic features. The main tools in proving Theorem~\ref{thm:main2}(1) are Morse quasigeodesics in higher rank symmetric spaces and asymptotic properties of \emph{regular} sequences in $G$, established in earlier papers by Kapovich, Leeb and Porti \cites{KL18b, KLP17, KLP18}. 

We hope that other interesting non-hyperbolic group actions satisfy meandering hyperbolicity as well. For example: 

\begin{conjecture}
Let $\Ga$ be a group of automorphisms of a $\mathrm{CAT}(0)$ cube complex $X$, acting on $X$ properly discontinuously and cocompactly. Then the action of $\Ga$ on the \emph{Roller boundary} $\partial_R X$ of $X$ is meandering-hyperbolic for a suitable metric on $\partial_R X$. 
\end{conjecture}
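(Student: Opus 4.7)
The plan is to proceed by analogy with the proof of Theorem~\ref{thm:main2} for uniform lattices on flag manifolds, replacing Morse quasigeodesics in a higher-rank symmetric space with combinatorial geodesics in $X$ and using hyperplane combinatorics in place of regular asymptotic directions. First I would fix a base vertex $v_0 \in X^{(0)}$ and equip $\partial_R X$ with a visual-type metric of the form
\[
d(\xi, \eta) = \sum_{\mathfrak h \in \mathcal{H}(\xi|\eta)} e^{-d_X(v_0,\, \mathfrak h)},
\]
where $\mathcal{H}(\xi|\eta)$ is the (always finite for distinct Roller points in each cubical direction, but possibly infinite in general) set of hyperplanes separating $\xi$ from $\eta$. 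One checks that this is a metric compatible with the Roller topology, that $\Ga$ acts by bi-Lipschitz homeomorphisms, and that a different base vertex yields a Lipschitz-equivalent metric. After averaging over a $\Ga$-orbit of base vertices, one obtains a $\Ga$-equivariant bi-Lipschitz metric, up to the quasi-isometric fuzziness that meandering hyperbolicity is designed to tolerate.

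Next I would construct an equivariant finite family of shadows covering $\partial_R X$. For each halfspace $\mathfrak h \subset X$, let $U_{\mathfrak h} \subset \partial_R X$ be the set of Roller ultrafilters containing $\mathfrak h$; these clopen sets form a subbasis, and by cocompactness the shadows with $d_X(v_0, \mathfrak h) \le R_0$ fall into finitely many $\Ga$-orbits. Expansion at a point $\xi \in \partial_R X$ would be witnessed by a \emph{rank-one element} $\ga \in \Ga$ whose contracting axis points toward $\xi$; existence of such elements under mild irreducibility is guaranteed by Caprace--Sageev. The dual halfspace chain along the axis of $\ga$ gives a nested sequence $\mathfrak h_1 \supsetneq \mathfrak h_2 \supsetneq \cdots$ such that $\ga$ maps $U_{\mathfrak h_n}$ strictly inside $U_{\mathfrak h_{n+k}}$; counting separating hyperplanes shows that $\ga$ expands the metric $d$ on a shadow of $\xi$ by a definite factor bounded below in terms of the translation length.

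For the coding/meandering condition I would form the graph $\mathcal{G}$ whose vertices are the chosen shadows $U_{\mathfrak h}$ and whose edges connect pairs with $\ga \cdot U_{\mathfrak h} \supset U_{\mathfrak h'}$ for some short word $\ga$; infinite admissible paths correspond to points of $\partial_R X$. Meandering hyperbolicity reduces to two quantitative estimates: that admissible paths converge exponentially to unique boundary points, and that the associated holonomy maps contract neighborhoods with an explicit Lipschitz constant. Both follow from the key combinatorial fact that deep nested chains of halfspaces produce exponentially many separated hyperplanes between their interiors; this is the cube-complex analogue of the regularity-versus-distance estimates used in the flag-manifold case.

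The main obstacle is the presence of non-rank-one directions, which are unavoidable whenever $\Ga$ contains a free abelian subgroup of rank $\ge 2$ (already for the standard $\Z^2$-cubulation of $\R^2$). Along such directions Sullivan's strict expansion axiom fails, and the meandering estimates degenerate because nested halfspace chains need not be \emph{uniformly} divergent across factors. Overcoming this very likely requires the Caprace--Sageev product decomposition $X \cong X_1 \times \cdots \times X_k$, treating each irreducible factor separately and assembling meandering hyperbolicity from the factors; this reassembly step is genuinely subtle because the Roller compactification of a product is the product of Roller compactifications only up to the ``singular strata'' at which one coordinate escapes to the boundary before another. Establishing that meandering hyperbolicity is stable under such products, and that the singular strata admit a compatible coding, is where I expect the bulk of the new work to lie.
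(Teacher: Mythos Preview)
The statement you are addressing is a \emph{conjecture} in the paper, not a theorem: the authors state it as an open problem immediately after Theorem~\ref{thm:main2} and offer no proof or proof sketch. There is therefore nothing in the paper to compare your proposal against.

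Your proposal is also not a proof but a research outline, and you acknowledge as much in your final paragraph: the expansion condition fails along non-rank-one directions (the $\Z^2$ example already breaks Definition~\ref{def:expansion}), and you defer the entire difficulty to an unproven product-stability statement for meandering hyperbolicity together with an unspecified treatment of the singular strata in the Roller boundary of a product. Until those are supplied, this is a plausible plan of attack on an open problem, not a proof; in particular, the claim that ``meandering hyperbolicity reduces to two quantitative estimates'' is premature, since you have not produced an expansion datum in the sense of Definition~\ref{def:expansion} when rank-one elements do not witness expansion at every point of $\partial_R X$.
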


Even for word-hyperbolic groups $\Ga$, meandering-hyperbolic actions $\Ga\to\Homeo(M)$ provide interesting results and examples. In this setting, we find a relation between the $\Ga$-invariant set $\La$ and the Gromov boundary $\pa\Ga$ of $\Ga$ as follows.

\begin{theorem}[Definition~\ref{def:pi} and Theorem~\ref{thm:pi}]\label{thm:codingmap}
Let $\Ga$ be a non-elementary word-hyperbolic group. If $\Ga\to\Homeo(M)$ is a meandering-hyperbolic action with a compact invariant subset $\La$, then there exists an equivariant continuous surjective \emph{postal map} $\pi:\La\to\pa\Ga$ to the Gromov boundary of $\Ga$; the map $\pi$ restricts to a quasi-open map on each minimal non-empty closed $\Ga$-invariant subset $\La_\mu\subset\La$.
\end{theorem}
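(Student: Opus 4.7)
The plan is to code each $x\in\La$ by a quasigeodesic ray in the Cayley graph of $\Ga$ and to define $\pi(x)$ as its limit in $\pa\Ga$. Meandering hyperbolicity supplies a finite family $\{(U_i,g_i)\}_{i\in I}$ with $U_i\subset M$ open, $\La\subset\bigcup_iU_i$, and each $g_i\in\Ga$ acting as a uniform expansion on $U_i\cap\La$ onto a set of definite diameter. For $x\in\La$, an \emph{itinerary} is a sequence $(i_0,i_1,\dots)$ such that, setting $w_n:=g_{i_{n-1}}\cdots g_{i_0}$, one has $w_nx\in U_{i_n}$ for every $n$. Iterating the expansion shows that itineraries exist for every $x\in\La$ and that the inverse elements $h_n:=w_n^{-1}\in\Ga$ contract a fixed-radius ball around $w_nx$ to a neighborhood of $x$ whose diameter tends to $0$ exponentially.

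The key technical step is to show that $(h_n)_{n\ge 0}$ traces a quasigeodesic ray in the Cayley graph of $\Ga$, with constants depending only on the action. This is precisely where meandering hyperbolicity, as opposed to mere expansion, is needed: its additional quantitative clauses provide uniform transversality between contracting and expanding behaviour that survives arbitrarily long compositions, preventing word-length cancellations and forcing $|h_n|_\Ga$ to grow linearly in $n$. Granted this, word-hyperbolicity of $\Ga$ gives a unique boundary limit $\pi(x):=\lim_{n\to\infty}h_n\in\pa\Ga$.

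Well-definedness, equivariance, and continuity then follow from standard hyperbolic-geometry arguments. Any two itineraries of the same $x$ produce quasigeodesic rays whose $n$-th vertices both send a common small neighborhood of $x$ into a set of shrinking diameter; the Morse lemma in $\Ga$ forces them to fellow-travel with a common endpoint, so $\pi(x)$ is independent of the itinerary. Equivariance $\pi(\ga x)=\ga\cdot\pi(x)$ is automatic, since an itinerary for $\ga x$ is obtained from one for $x$ by a bounded left-modification that changes the quasigeodesic only near its basepoint. Continuity follows from the observation that points close to $x$ admit itineraries agreeing with one for $x$ on a long initial segment, so their coding rays share a long initial segment and their limits lie in a small shadow in $\pa\Ga$.

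For the quasi-openness claim on a minimal invariant subset $\La_\mu\subset\La$, fix a non-empty open $V\subset\La_\mu$. The image $\pi(\La_\mu)$ is non-empty, closed, and $\Ga$-invariant in $\pa\Ga$; since $\Ga$ is non-elementary its boundary action is minimal, hence $\pi(\La_\mu)=\pa\Ga$. By minimality of the action on $\La_\mu$, finitely many translates $\ga_1V,\dots,\ga_NV$ cover $\La_\mu$, and therefore $\ga_1\pi(V),\dots,\ga_N\pi(V)$ cover $\pa\Ga$; Baire's theorem applied to the closures forces some (hence each, by $\Ga$-equivariance) of them to be somewhere dense, which is the quasi-openness of $\pi|_{\La_\mu}$. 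The main obstacle throughout is the second step: upgrading dynamical expansion on $\La$ to metric non-cancellation in the Cayley graph of $\Ga$ is exactly the job that the meandering-hyperbolicity axioms were designed to perform.
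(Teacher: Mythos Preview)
Your overall architecture---code points by itineraries, obtain quasigeodesic rays, take their boundary limits, and use minimality plus a Baire argument for quasi-openness---matches the paper. The continuity and quasi-openness paragraphs are essentially the paper's arguments.

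However, you have swapped the roles of the two hypotheses. The quasigeodesic property of the coding ray follows from the \emph{expansion} datum alone, not from meandering hyperbolicity: if $r_{ij}=(c^\al_i)^{-1}c^\al_j$, then $\rho(r_{ij})$ is $\la^{i-j}$-expanding on a nonempty set near $x$ and simultaneously $L^{|r_{ij}|_\vS}$-Lipschitz there, which forces $|r_{ij}|_\vS\ge\frac{\log\la}{\log L}(i-j)$. No transversality clause is needed; this is Corollary~3.10 in the paper.

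Where meandering hyperbolicity is genuinely used is exactly the step you treat as routine: well-definedness of $\pi(x)$. Your argument there is circular. The Morse lemma says that two quasigeodesic rays \emph{with the same endpoint} fellow-travel; it does not say that two quasigeodesic rays starting at the identity must have the same endpoint. The dynamical observation that both $h_n$ and $h'_n$ contract a neighborhood of $x$ does not imply they are close in the Cayley graph: the images $(c^\al_n)^{-1}(x)=p_{n+1}$ and $(c^\be_n)^{-1}(x)=q_{n+1}$ can lie anywhere in $\La$, and there is no a~priori bound on $d_\vS(c^\al_n,c^\be_n)$. Meandering hyperbolicity is precisely the axiom asserting that any two coding rays for $x$ are $(\D,\de;N)$-equivalent; hyperbolicity of $\Ga$ then upgrades this equivalence to fellow-traveling, so the rays define a common point in $\pa\Ga$. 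Without invoking this, the map $\pi$ is not known to be well-defined.
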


\noindent Here, a map between topological spaces is said to be \emph{nowhere constant} (resp. \emph{quasi-open}) if the image of every non-empty open subset is not a singleton (resp. has non-empty interior).

We then explore examples of meandering-hyperbolic actions of word-hyperbolic groups where the postal maps $\pi$ above are increasingly more complicated. Simple examples are convex-cocompact Kleinian groups and, more generally, Anosov subgroups (see Section~\ref{sec:anosov}), for which the invariant subsets $\La$ are equivariantly homeomorphic to the Gromov boundary $\pa\Ga$ (via the postal map $\pi$). In contrast, there are examples where the postal map $\pi$ can be a covering map (Examples~\ref{ex:E0} and \ref{ex:E1}), can be open but fail to be a local homeomorphism (Example~\ref{ex:branch}), and can even fail to be an open map (Example~\ref{ex:blowup}). 

Conversely to Theorem~\ref{thm:codingmap}, we also prove:

\begin{theorem}[Theorem~\ref{thm:e=>h}]\label{thm:main3}
Let $\Ga$ be a non-elementary word-hyperbolic group. Suppose that $\Ga\to\Homeo(M)$ is an action expanding at $\La$, for which there exists an equivariant continuous nowhere constant map $f:\La\to \pa\Ga$. Then the $\Ga$-action is S-hyperbolic (and thus, meandering-hyperbolic) at $\La$ and the map $f$ equals the postal map $\pi$ (of the previous theorem). 
\end{theorem}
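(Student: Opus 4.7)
The plan is to pull back the convergence dynamics of $\Ga$ on $\pa\Ga$ through $f$, combine with the expansion hypothesis to verify the S-hyperbolicity axioms of Definition~\ref{def:shyp}, and finally identify $f$ with $\pi$ by matching attractor characterizations. Since $\Ga$ is non-elementary word-hyperbolic, its action on $\pa\Ga$ is a uniform convergence action: for any sequence $g_n\to\infty$ in $\Ga$ there exist, after passing to a subsequence, points $\xi^\pm\in\pa\Ga$ such that $g_n\to\xi^+$ locally uniformly on $\pa\Ga\setminus\{\xi^-\}$. Equivariance $g_n\circ f=f\circ g_n$ then implies that for every compact $K\subset\La\setminus f^{-1}(\xi^-)$, the image $f(g_n K)\subset\pa\Ga$ shrinks in diameter to $0$ and accumulates on $\xi^+$.

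The crucial next step is to upgrade this $f$-image shrinking to genuine convergence $g_n K\to f^{-1}(\xi^+)$ inside $\La$. Suppose for contradiction that some $y\in\La\setminus f^{-1}(\xi^+)$ is an accumulation point of $g_n K$. By the expansion hypothesis, some $h\in\Ga$ expands a small ball around $y$ by a definite factor; replacing $g_n$ by $h g_n$ along a subsequence and iterating the expansion produces points in $g_n K$ whose pairwise $\La$-distances are uniformly bounded below, while their $f$-images remain in shrinking neighborhoods of $\xi^+$. The resulting accumulation set in $\La$ then has nonempty interior mapping to $\{\xi^+\}$ under $f$, contradicting nowhere-constancy.

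With the attractor-repeller picture in hand, the remaining axioms of S-hyperbolicity follow by quantitative bookkeeping: the classical uniform control of the pair $(\xi^+,\xi^-)$ in terms of the word length of $g_n$ transports through $f$ (which is uniformly continuous on the compact set $\La$) and combines with the global expansion constant to yield the bounds demanded by Definition~\ref{def:shyp}. The identification $f=\pi$ then follows from the construction of $\pi$ in Definition~\ref{def:pi}: by definition $\pi(x)$ is the attractor in $\pa\Ga$ determined by the same class of sequences just analyzed, and the pullback argument above shows this attractor equals $f(x)$.

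The hardest step is the second one. Because $f$ may be neither injective nor open nor finite-to-one, one cannot invert $f$ to transfer dynamics from $\pa\Ga$ back to $\La$; nowhere-constancy together with the expansion axiom is exactly enough to prevent the ``fibrewise spreading'' of $g_n K$ that would be invisible to $f$ alone. Without either hypothesis, $f\circ g_n$ could shrink while $g_n$ itself does not, so both assumptions are essential to the pullback argument.
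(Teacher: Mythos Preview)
Your proposal has a genuine gap: it never engages with the objects that Definition~\ref{def:shyp} is actually about. S-hyperbolicity is the statement that for every $x\in\La$ all \emph{coding rays} $c^\al\in\Ray_x(\D,\de)$ pairwise $N$-fellow-travel in the Cayley graph $(\Ga,d_\vS)$. These are specific sequences in $\Ga$ built from the expansion datum (Definitions~\ref{def:code} and~\ref{def:ray}); the definition says nothing about attractor/repeller pairs or convergence dynamics on $\La$. Your outline establishes (or attempts to establish) a convergence-type picture for the $\Ga$-action on $\La$ by pulling back through $f$, and then asserts that ``the remaining axioms of S-hyperbolicity follow by quantitative bookkeeping.'' But there are no such remaining axioms---the single condition to check is fellow-traveling of the coding rays, and nothing in your argument touches them. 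Likewise, $\pi(x)$ is defined (Definition~\ref{def:pi}) as the ideal endpoint of a coding ray $c^\al$, not as an attractor of an arbitrary sequence $g_n\to\infty$, so your identification $f=\pi$ is not justified either.

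The paper's proof proceeds entirely through the coding rays. By Corollary~\ref{cor:qg} each $c^\al$ is already a uniform $(A,C)$-quasigeodesic in $\Ga$; by Theorem~\ref{thm:minimal}(1) a subsequence $(c^\al_{i_j})$ converges to $x$ uniformly on some ball $B_{\eta/2}(q)\subset\La$. Equivariance of $f$ then gives $c^\al_{i_j}\to f(x)$ pointwise on the set $S=f(B_{\eta/2}(q))\subset\pa\Ga$, and nowhere-constancy of $f$ guarantees $\card(S)\ge2$. Lemma~\ref{lem:conv} (which packages the convergence action of $\Ga$ on $\pa\Ga$ together with the Morse lemma) then forces $c^\al(\N_0)$ to lie within a uniform distance $D$ of the geodesic ray $e\,f(x)$. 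Hence any two coding rays for $x$ are $2D$-fellow-travelers, and $f(x)=\pi(x)$. Your convergence-action pullback to $\La$, even if it could be made rigorous (the ``iterating the expansion'' step is quite vague as written), is neither necessary nor sufficient for this conclusion; the argument must live in $\Ga$, not in $\La$.
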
 

We note that in the case of real-analytic expanding $\Gamma$-actions on the circle, a sharper result is proven by Deroin in \cite{Deroin}. Without assuming hyperbolicity of $\Ga$ (he only assumes finite generation and \emph{local discreteness} of the action), Deroin proves that the action comes from a lift of a Fuchsian group action under a finite covering map $S^1\to S^1$. Compare Example~\ref{ex:E0}.

\medskip
Building upon the work in \cite{KLP17} and Theorem~\ref{thm:main3}, we give a (yet another) new characterization of Anosov subgroups in terms of expanding actions. See Theorem~\ref{thm:equiv}. This characterization shows, among other things, that the action of any Anosov group on its flag-limit set in the partial flag manifold is S-hyperbolic. Thus, thanks to Theorem~\ref{thm:main1}, we obtain the stability of Anosov groups in a broader context than those in \cite{GW}*{Theorem 5.13} and \cite{KLP14}*{Theorems 1.11 and 7.36}. Based on this we also obtain an alternative proof for the openness of the Anosov property in the representation variety (Corollary~\ref{cor:stability}). 

We remark that Bochi, Potrie and Sambarino \cite{BPS} gave a purely dynamical proof of the structural stability of Anosov representations, which shares many similarities with the concepts in the present paper. For example, one may compare the expansion subsets with the notion of multicones in \cite{BPS}*{Section~5}. The existence of codings appears in \cite{BPS}*{Lemma 3.20} and the expansion property in \cite{BPS}*{Lemma 3.21}. (We thank the referee for pointing out these similarities to us.) On the other hand, in the setting of S-hyperbolic actions, such as Anosov actions, all the ideas in the proof of Theorem~\ref{thm:main1} go back to Sullivan's original paper \cite{Sul}.

After the initial version of this paper was completed, Mann and Manning posted an interesting preprint \cite{MM} related to the main theme of our paper: they prove topological stability of word-hyperbolic group actions on their boundaries under the assumption that the latter are topological spheres. Unlike our work, they only assume $C^0$ rather than Lipschitz perturbation.

\paragraph{Acknowledgments.}
We thank Bernhard Leeb and Inkang Kim for their interest and encouragement. We are grateful to the referee for reading the manuscript extremely carefully and providing many valuable comments that helped us to clarify and streamline the exposition. We incorporated our result on uniform lattices in the present paper as per the referee's suggestion as well.

M.~Kapovich was partly supported by the NSF grant DMS-16-04241, by KIAS (the Korea Institute for Advanced Study) through the KIAS scholar program, and by a Simons Foundation Fellowship, grant number 391602.
S.~Kim was supported by the grants NRF-2015R1D1A1A09058742 and NRF-2018R1D1A1B07043321.
J.~Lee thanks Jeju National University for its hospitality during his visit; he was supported by the grants NRF-2014R1A2A2A01005574, NRF-2017R1A2A2A05001002, and NRF-2019R1F1A1047703.

\section{Notation and preliminaries}\label{sec:prelim}

The identity element of an abstract group will be denoted by $e$. We will use the following notation for the sets of non-negative integers and natural numbers: 
\[
\N_0=\{0,1,2,\ldots\}\quad\textup{and}\quad\N=\{1,2,3,\ldots\}.
\]

We will follow the Bourbaki convention that neighborhoods of a point $a$ (resp. a subset $A$) in a topological space $X$ need not be open but are only required to contain an open subset which, in turn, contains $a$ (resp. $A$). In particular, a topological space $X$ is locally compact if and only if every point in $X$ admits a neighborhood basis consisting of compact subsets of $X$.

A topological space is called \emph{perfect} if it has no isolated points and has cardinality $\ge 2$.

A map between topological spaces is \emph{nowhere constant} if the image of every open subset is not a singleton. A map is said to be \emph{open} if it sends open sets to open sets. A map $f:X\to Y$ is \emph{open at a point $x\in X$} if it sends every neighborhood of $x$ to a neighborhood of $f(x)$. We let $O_f$ denote the subset of $X$ consisting of points where $f$ is open. Thus, a map $f$ is open if and only if $O_f= X$.

A map $f: X\to Y$ is said to be \emph{quasi-open} (or \emph{quasi-interior}) if for every subset $A\subset X$ with non-empty interior, the image $f(A)$ has non-empty interior in $Y$. If $f:X\to Y$ is a continuous map between locally compact metrizable spaces then it is quasi-open if and only if the subset $O_f\subset X$ is comeagre (that is, its complement is a countable union of nowhere dense subsets). For instance, the map $\R\to\R, x\mapsto x^2$, is quasi-open but not open. A more interesting example of a (non-open) quasi-open map is a \emph{Cantor function} $f:C\to[0,1]$, which is a continuous surjective monotonic function from a Cantor set $C\subset\R$. It has the property that $x_1<x_2$ implies $f(x_1)< f(x_2)$ unless $x_1, x_2$ are boundary points of a component of $\R-C$. Thus, $C-O_f$ is the countable subset consisting of boundary points of components of $\R-C$.

\medskip 
Let $(X,d)$ be a metric space. Given $x\in X$ and $r>0$, the open (resp. closed) $r$-ball centered at $x$ is denoted by $B_r(x)$ (resp. $\ov{B}_r(x)$). Given a subset $\La\subset X$, its open (resp. closed) $r$-neighborhood is denoted by $N_r(\La)$ (resp. $\ov{N}_r(\La)$). A \emph{Lebesgue number} of an open cover $\U$ of $\La$ is defined to be a number $\de>0$ such that, for every $x\in\La$, the $\de$-ball $B_\de(x)$ is contained in some member of $\U$; we denote
\[
\de_\U=\sup\{\de\mid\de\textup{ is a Lebesgue number of }\U\}.
\]
For a subset $U\subset X$ and $r>0$ we define 
\begin{align}\label{eqn:ur}
U^r= \{x\in U\mid B_{r}(x)\subset U\}\subset U. 
\end{align}
A sequence of subsets $W_k\subset X$ is said to be \emph{exponentially shrinking} if the diameters of these subsets converge to zero exponentially fast, that is, there exist constants $A,C>0$ such that
\[
\diam(W_k)\le A\, e^{-C k} 
\]
for all $k$.

If $X$ is a Riemannian manifold and $\Phi$ is a diffeomorphism of $X$, the \emph{expansion factor} of $\Phi$ at $x\in X$ is defined as
\begin{align}\label{eqn:ep} 
\ef(\Phi,x)=\inf_{0\neq v\in T_x X}\frac{\|D_x\Phi(v)\|}{\|v\|}.
\end{align}

\medskip
We now present some dynamical and geometric preliminaries to be used later. For more details we refer the readers to \cite{BH} and \cite{DK}.

\subsection{Topological dynamics} \label{sec:dynamics} 

A continuous action $\Ga\times Z\to Z$ of a topological group on a topological space is \emph{minimal} if $Z$ contains no proper closed $\Ga$-invariant subsets or, equivalently, if every $\Ga$-orbit is dense in $Z$. A point $z\in Z$ is a \emph{wandering point} for an action $\Ga\times Z\to Z$ if there exists a neighborhood $U$ of $z$ such that $g U\cap U=\emptyset$ for all but finitely many $g\in\Ga$. If the space $Z$ is metrizable, then a point $z\in Z$ is \emph{not} a wandering point if and only if there exist a sequence $(g_k)$ of distinct elements in $\Ga$ and a sequence $(z_k)$ in $Z$ converging to $z$ such that $g_kz_k\to z$. For further discussion of dynamical relations between points under group actions, see \cite{KL18a}*{\S4.3}.

A continuous action $\Ga\times Z\to Z$ of a discrete group $\Ga$ on a compact metrizable topological space $Z$ is a \emph{convergence action} if the product action of $\Ga$ on $Z^3$ restricts to a properly discontinuous action on
\[
T(Z)=\{(z_1, z_2, z_3)\in Z^3\mid \card\{z_1, z_2, z_3\}=3\}. 
\]
Equivalently, a continuous action of a discrete group is a convergence action if every sequence $(g_k)$ contains a subsequence $(g_{k_j})$ which is constant or converges to a point $z_+\in Z$ uniformly on compacts in $Z-\{z_-\}$ for some $z_-\in Z$; see \cite{Bowditch}*{Proposition 7.1}. In this situation, the inverse sequence $(g_{k_j}^{-1})$ converges to $z_-$ uniformly on compacts in $Z-\{z_+\}$. The set of such limit points $z_+$ is the \emph{limit set} $\La$ of the action of $\Ga$; this is a closed $\Ga$-invariant subset of $Z$. Observe that a convergence action need not be faithful but it necessarily has finite kernel, provided that $T(Z)\ne \emptyset$. A convergence action on $Z$ is called \emph{uniform} if it is cocompact on $T(Z)$.

\begin{remark}\label{rem:consubgroups} 
Since proper discontinuity of an action is preserved under taking subgroups, if the action of $\Ga$ on $Z$ is a convergence action, then its restriction to any subgroup $\Ga_0<\Ga$ is still a convergence action. 
\end{remark}

Item (1) of the following theorem can be found in \cite{Tukia}*{Theorem 2S}; for item (2) see \cite{Tukia98}*{Theorem 1A} for instance.

\begin{theorem}\label{thm:ne-con-act}
Suppose $\Ga\times Z\to Z$ is a convergence action with limit set $\La$ such that $\card(\La)\ge3$. Then
\begin{enumerate}[label=\textup{(\arabic*)},nosep,leftmargin=*]
\item
$\La$ is perfect and the action is minimal on $\La$.
\item
If the action is uniform and $Z$ is perfect, then $Z=\La$.
\end{enumerate}
\end{theorem}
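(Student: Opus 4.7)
The plan is to use the uniform subsequence convergence property together with the properly discontinuous action of $\Ga$ on the triple space $T(Z)$. For minimality in part (1), fix $z,w\in\La$ and try to produce a sequence $\ga_n\in\Ga$ with $\ga_n w\to z$. Since $z\in\La$, there is a sequence $(g_i)$ of distinct elements with $g_i\to z$ uniformly on compacts of $Z-\{z_-\}$ for some $z_-\in Z$. If $w\ne z_-$ this is immediate; the delicate case is $w=z_-$. Using $\card(\La)\ge 3$ I pick $u\in\La\setminus\{z,w\}$ and a second convergence sequence $(h_j)$ attracting to $u$ with some repellor $u'$. When $u'\ne w$ the sequence $h_jw\to u$ shows $u\in\overline{\Ga w}$, and then the diagonal $g_ih_{j(i)}w$ converges to $z$ because $u\ne z_-$. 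The residual subcase $u'=w$ is handled by replacing $(h_j)$ with a suitable composition such as $(g_i h_j)$ for an appropriate $i=i(j)$, so as to produce a convergence sequence with attractor still close to $z$ but repellor different from $w$.

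Once minimality is established, perfectness of $\La$ follows by contradiction: if some point of $\La$ were isolated, minimality would force every point of $\La$ to be isolated, hence $\La$ to be finite; the action $\Ga\to\mathrm{Sym}(\La)$ would then have infinite kernel $K$, and any triple of distinct points in $\La$ (available since $\card(\La)\ge 3$) would be a point of $T(Z)$ fixed pointwise by $K$, contradicting proper discontinuity of $\Ga$ on $T(Z)$.

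For part (2), suppose for contradiction $z\in Z\setminus\La$. Perfectness of $Z$ lets me choose distinct sequences $z_n,z'_n\to z$ with $z,z_n,z'_n$ pairwise distinct, so $t_n=(z,z_n,z'_n)\in T(Z)$ converges in $Z^3$ to $(z,z,z)$ and exits every compact subset of $T(Z)$. Uniformity supplies $\ga_n\in\Ga$ with $\ga_n t_n$ contained in some fixed compact $K\subset T(Z)$; since $(\ga_n)$ cannot be relatively compact in $\Ga$, after extraction $\ga_n\to z_+\in\La$ uniformly on compacts of $Z-\{z_-\}$, where $z_-\in\La$ is the attracting point of $(\ga_n^{-1})$. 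Because $z\notin\La$ while $z_-\in\La$, we have $z\ne z_-$, so for large $n$ all three of $z,z_n,z'_n$ lie in a small compact neighborhood of $z$ disjoint from $z_-$; uniform convergence then yields $\ga_n t_n\to(z_+,z_+,z_+)\notin T(Z)$, contradicting $\ga_n t_n\in K\subset T(Z)$.

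The chief obstacle in both arguments is identical: the point one cares about may coincide with the repellor of every naturally produced convergence sequence, and one uses the cardinality hypothesis (respectively the perfectness of $Z$, together with the observation that the repellor necessarily lies in $\La$) to generate an auxiliary sequence whose attractor/repellor pair avoids the problematic point.
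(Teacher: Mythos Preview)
The paper does not prove this theorem itself; it cites \cite{Tukia}*{Theorem 2S} for part (1) and \cite{Tukia98}*{Theorem 1A} for part (2). I will therefore assess your argument on its own.

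Your proof of part (2) is correct and is the standard one. The key observation, which you make, is that the repellor $z_-$ of the extracted subsequence lies in $\La$ (being the attractor of $(\ga_n^{-1})$), hence differs from $z\notin\La$; the triple $t_n$ then eventually lies in a fixed compact away from $z_-$ and is swept to the diagonal by $\ga_n$. Your deduction of perfectness from minimality in part (1) is also fine.

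There is, however, a genuine gap in the minimality argument. The clean reduction is this: if $z\notin\ov{\Ga w}$, then for \emph{any} convergence sequence with attractor $z$, every point of the closed invariant set $\ov{\Ga w}$ must coincide with its repellor; hence $\ov{\Ga w}$ is a singleton and $w$ is a \emph{global fixed point} of $\Ga$. Your residual subcase $w=z_-=u'$ is exactly this situation, and your proposed fix does not escape it. For a diagonal sequence $(g_{i(j)}h_j)$ with $i(j)\to\infty$: if $v\ne w$ then $h_j(v)\to u\ne w$, whence $g_{i(j)}h_j(v)\to z$, so the repellor is \emph{still} $w$. Fixing $j$ and letting $i\to\infty$ instead gives repellor $h_j^{-1}(w)$, which helps only when some $h_j$ (more generally, some element of $\Ga$) moves $w$; but if $w$ is a global fixed point, every element fixes $w$ and no composition whatsoever can produce a sequence that both moves $w$ and has repellor off $w$. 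What you are missing is precisely the (non-obvious) fact that a convergence group with $\card(\La)\ge3$ admits no global fixed point. In Tukia's treatment this passes through the structure of loxodromic elements (two loxodromics in a convergence group have either equal or disjoint fixed-point sets, and loxodromic fixed points are dense in $\La$ once $\card(\La)\ge3$); you need this or an equivalent ingredient to close the argument.
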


\subsection{Coarse geometry}\label{sec:coarse_geometry}

A metric space $(X,d)$ is \emph{proper} if the closed ball $\ov{B}_r(x)$ is compact for every $x\in X$ and every $r>0$. Note that proper metric spaces are complete. A metric space $(X,d)$ is called a \emph{geodesic space} if every pair of points $x,y \in X$ can be joined by a geodesic segment $xy$, that is, an isometric embedding of an interval into $X$ joining $x$ to $y$.

\begin{definition}[Quasi-geodesic]
Let $I$ be an interval of $\R$ (or its intersection with $\Z$) and $(X,d)$ a metric space. A map $c:I\to X$ is called an \emph{$(A,C)$-quasigeodesic} with constants $A\geq 1$ and $C\geq 0$ if for all $t,t'\in I$,
\[
\frac{1}{A}|t-t'|-C\le d(c(t),c(t'))\le A|t-t'|+C.
\]
\end{definition}

\begin{definition}[Hyperbolic space]
Let $\de\ge 0$. A geodesic space $X$ is said to be \emph{$\de$-hyperbolic} if for any geodesic triangle in $X$, each side of the triangle is contained in the closed $\de$-neighborhood of the union of the other two sides. A geodesic space is said to be \emph{hyperbolic} if it is $\de$-hyperbolic for some $\de\ge 0$.
\end{definition}

Let $X$ be a proper $\de$-hyperbolic space. Two geodesic rays $\R_{\ge0}\to X$ are said to be \emph{asymptotic} if the Hausdorff distance between their images is finite. Being asymptotic is an equivalence relation on the set of geodesic rays. The set of equivalence classes of geodesic rays in $X$ is called the \emph{visual boundary} of $X$ and denoted by $\pa X$. In view of the Morse lemma for hyperbolic spaces (see \cite{BH}*{Theorem III.H.1.7} or \cite{DK}*{Lemma 11.105} for example), one can also define $\pa X$ as the set of equivalence classes of quasigeodesic rays $\R_{\ge0}\to X$. We will use the notation $x\xi$ for a geodesic ray in $X$ emanating from $x$ and representing the point $\xi\in \pa X$.

Fix $r>2\de$ and let $c_0:\R_{\ge0}\to X$ be a geodesic ray representing $\xi \in \pa X$ with $c_0(0)=x$. A topology on $\pa X$ is given by setting the basis of neighborhoods of $\xi$ to be the collection $\{V_k(\xi)\}_{k\in \N}$, where $V_k(\xi)$ is the set of equivalence classes of geodesic rays $c$ such that $c(0)=x$ and $d(c(k),c_0(k))<r$. This topology extends to the \emph{visual compactification} of $X$
\[
\ov{X}:= X\cup \pa X,
\]
which is a compact metrizable space. We refer to \cite{BH}*{III.H.3.6} for details.

Let $x,y,z \in X$. The \emph{Gromov product} of $y$ and $z$ with respect to $x$ is defined by
\[
(y \cdot z)_x := \frac{1}{2}(d(x,y)+d(x,z)-d(y,z)).
\]
The Gromov product is extended to $X\cup\pa X$ by
\[
(y\cdot z)_x:=\sup \liminf_{k,j\to \infty} (y_k\cdot z_j)_x,
\]
where the supremum is taken over all sequences $(y_k)$ and $(z_j)$ in $X$ such that $\lim y_k=y$ and $\lim z_j=z$.

\begin{definition}[Visual metric]\label{vmetric}
Let $X$ be a hyperbolic space with base point $x\in X$. A metric $d_a$ on $\pa X$ is called a \emph{visual metric} with parameter $a>1$ if there exist constants $k_1, k_2>0$ such that
\[
k_1a^{-(\xi\cdot \xi')_x} \leq d_a(\xi,\xi') \leq k_2a^{-(\xi\cdot \xi')_x}
\]
for all $\xi,\xi'\in \pa X$.
\end{definition}

\noindent For every $a>1$ sufficiently close to $1$, a proper hyperbolic space admits a visual metric $d_a$ which induces the same topology as the topology on $\pa X$ described above. We refer to \cite{BH}*{Chapter III.H.3} for more details on constructing visual metrics.

\medskip
In the rest of the section we discuss hyperbolic groups and their relation to convergence actions.

\begin{definition}[Hyperbolic group]
A finitely generated group $\Ga$ is \emph{word-hyperbolic} (or simply \emph{hyperbolic}) if its Cayley graph with respect to a finite generating set of $\Ga$ is a hyperbolic metric space. A hyperbolic group is called \emph{elementary} if it contains a cyclic subgroup of finite index and \emph{non-elementary} otherwise.
\end{definition}

\noindent The \emph{Gromov boundary} $\pa\Ga$ of a hyperbolic group $\Ga$ is defined as the visual boundary of a Cayley graph $X$ of $\Ga$. The closure of $\Ga\subset X$ in the visual compactification $\ov{X}$ equals $\Ga\cup \pa \Ga$ and is denoted $\ov\Ga$; it is the \emph{visual compactification of $\Ga$}.

Every hyperbolic group $\Ga$ acts on its visual compactification $\ov{\Ga}$ by homeomorphisms. This action is a convergence action; see \cite{Tukia}*{Theorem 3.A} and \cite{Freden}. If a sequence $(c_k)$ in $\Ga$ represents a quasigeodesic ray within bounded distance from a geodesic ray $g\xi$ ($g\in\Ga$, $\xi\in\pa\Ga$), then this sequence, regarded as a sequence of maps $\ov\Ga\to\ov\Ga$, converges to $\xi$ uniformly on compacts in $\ov\Ga-\{\xi'\}$ for some $\xi'\in\pa\Ga$. We will use the following refinement of this property later in the proof of Theorem~\ref{thm:e=>h}:

\begin{lemma}\label{lem:conv}
Let $\Ga$ be a word-hyperbolic group. Suppose that $c:\N_0\to\Ga$, $k\mapsto c_k$, is an $(A,C)$-quasigeodesic ray in $\Ga$ such that
\begin{itemize}[nosep]
\item
the word length of $c_0$ is $\le 1$, and
\item
there exists a subsequence $(c_{k_j})$ converging to a point $\xi\in\pa\Ga$ pointwise on a subset $S\subset\pa\Ga$ with $\card(S)\ge2$.
\end{itemize}
Then the image $c(\N_0)$ is $D$-Hausdorff close to a geodesic $e\xi$ in the Cayley graph $X$ of $\Ga$, where $D$ depends only on $(A,C)$ and the hyperbolicity constant of $X$.
\end{lemma}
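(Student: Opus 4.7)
The plan is to combine the Morse stability of quasigeodesic rays in hyperbolic spaces with the convergence-action dynamics of $\Ga\curvearrowright\ov\Ga$ recalled just before the lemma, in order to pin down the endpoint of $c$. First, since $c:\N_0\to\Ga\subset X$ is an $(A,C)$-quasigeodesic ray in the $\de$-hyperbolic Cayley graph $X$, the Morse lemma yields a constant $D_0=D_0(A,C,\de)$ and a geodesic ray $r:\R_{\ge0}\to X$ with $r(0)=c_0$ whose image is within Hausdorff distance $D_0$ of $c(\N_0)$. Let $\eta\in\pa\Ga$ denote the endpoint of $r$. Because $d(e,c_0)\le 1$, the standard fellow-traveler estimate between asymptotic geodesic rays from nearby basepoints shows that $r$ is Hausdorff close, at a distance controlled by $\de$ and $D_0$, to the geodesic ray $e\eta$. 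Setting $D$ to be the resulting constant, this already gives the Hausdorff bound claimed in the lemma; it only remains to identify $\eta$ with $\xi$.

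To do so, I invoke the convergence-action property. Pass to a sub-subsequence $(c_{i_{j_k}})$ which, viewed as a sequence of homeomorphisms of $\ov\Ga$, converges to some $z_+\in\ov\Ga$ uniformly on compacts in $\ov\Ga-\{z_-\}$ for some $z_-\in\ov\Ga$. Because the quasigeodesic lower bound forces $c_{i_{j_k}}\to\infty$ in $\Ga$, one checks that both $z_\pm$ must in fact lie in $\pa\Ga$: any group-element input $w\ne z_-$ gives $c_{i_{j_k}}\cdot w\to z_+$ with $d(e,c_{i_{j_k}}\cdot w)\to\infty$, and symmetrically for the inverse sequence. Since $\card(S)\ge 2$ and $\{z_-\}$ is a single point, we may pick $w\in S\setminus\{z_-\}$; then $c_{i_{j_k}}\cdot w\to z_+$ from the uniform convergence on compacts, while $c_{i_{j_k}}\cdot w\to\xi$ by the hypothesis on $S$. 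Hence $z_+=\xi$.

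Finally, evaluate the sub-subsequence at the basepoint $e\in\Ga\subset\ov\Ga$. Since $z_-\in\pa\Ga$ and $e\notin\pa\Ga$, we have $e\ne z_-$, and so $c_{i_{j_k}}=c_{i_{j_k}}\cdot e\to z_+=\xi$ as points of $\ov\Ga$. On the other hand, the Morse step shows that the full sequence $(c_i)$ converges to $\eta$ in $\ov\Ga$, so every subsequence does as well. Comparing the two limits forces $\eta=\xi$, so $c(\N_0)$ is $D$-Hausdorff close to the geodesic ray $e\xi$ as claimed. The only delicate point in this outline is verifying that the repelling point $z_-$ lies in $\pa\Ga$ rather than in $\Ga$ itself, which would trivialise the evaluation at $e$; this is ensured by the fact that $c_{i_{j_k}}\to\infty$, which is forced by the quasigeodesic inequality.
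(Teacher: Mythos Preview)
Your proof is correct and follows essentially the same approach as the paper: apply the Morse lemma to get a geodesic ray $e\eta$ close to $c(\N_0)$, then use the convergence-action dynamics of $\Ga$ on $\ov\Ga$ together with the hypothesis $\card(S)\ge 2$ to identify $\eta=\xi$. The only cosmetic difference is that the paper invokes directly the property (stated just before the lemma) that a quasigeodesic asymptotic to $\mu$ converges to $\mu$ as a sequence of maps on $\ov\Ga-\{\mu'\}$, and compares the two limits on a point of $S\setminus\{\mu'\}$; you instead pass to a convergence sub-subsequence and compare limits by evaluating at the basepoint $e$, which amounts to the same thing.
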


\begin{proof}
Since the word length of $c_0$ is $\le 1$, the extended Morse lemma for hyperbolic groups in \cite{DK}*{Lemma 11.105} states that there is a geodesic ray $\id\mu$ ($\mu\in\pa\Ga$) starting at the identity $\id\in\Ga$ such that the Hausdorff distance between the image $c(\N_0)$ and the ray $\id\mu$ in $X$ is bounded above by a uniform constant $D>0$ depending only on $(A,C)$ and $X$.

By the above property, the sequence $(c_k)$ converges to $\mu$ uniformly on compacts in $\ov\Ga-\{\mu'\}$ for some $\mu'\in\pa\Ga$. On the other hand, since $\card(S)\ge2$, there is a point $\nu\in S$ distinct from $\mu'$ such that the subsequence $(c_{k_j})$ converges to $\xi$ on $\{\nu\}\subset\pa\Ga$. Therefore, we must have $\mu=\xi$ and $\id\mu=\id\xi$.
\end{proof}

Furthermore, the action of $\Ga$ on $\pa\Ga$ is a uniform convergence action. In particular, if $\Ga$ is non-elementary then this action has finite kernel (the unique maximal finite normal subgroup of $\Ga$), is minimal, and $\pa\Ga$ is a perfect topological space; compare Theorem~\ref{thm:ne-con-act}. We refer to \cite{DK}*{Lemma 11.130} for more details.

Conversely, Bowditch \cite{Bowditch} gave a topological characterization of hyperbolic groups and their Gromov boundaries as uniform convergence actions $\Ga\times Z\to Z$ of discrete groups on perfect metrizable topological spaces:

\begin{theorem}[Bowditch]\label{thm:bowditch} 
Suppose that $Z$ is a compact perfect metrizable space of cardinality $\ge2$ and $\Ga\times Z\to Z$ is a continuous action of a discrete group, which is a uniform convergence action. Then $\Ga$ is a non-elementary hyperbolic group and $Z$ is equivariantly homeomorphic to the Gromov boundary $\pa\Ga$.
\end{theorem}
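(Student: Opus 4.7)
The plan is to follow Bowditch's original argument in \cite{Bowditch}. Since $Z$ is perfect with $\card(Z)\ge 2$, it is uncountable, so $\card(Z)\ge 3$ and $T(Z)$ is non-empty. By the uniform convergence hypothesis, $\Ga$ acts properly discontinuously and cocompactly on the locally compact metrizable space $T(Z)$. A Švarc--Milnor-style argument applied to this action shows that the kernel of the homomorphism $\Ga\to\Homeo(Z)$ is finite and that the quotient is finitely generated; moreover, by Theorem~\ref{thm:ne-con-act} the action on $Z$ is minimal, so $Z$ coincides with the limit set of $\Ga$.

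The core task is to reconstruct a hyperbolic metric space on which $\Ga$ acts geometrically and whose boundary is canonically $Z$. Following Bowditch, I would encode the dynamical data as a $\Ga$-invariant \emph{annulus system} on $Z$: a collection of ordered pairs $(A,A^*)$ of disjoint closed subsets satisfying crossing and nesting axioms abstracted from how quasigeodesics separate the Gromov boundary of a hyperbolic space. Concretely, fixing a compact fundamental domain $K\subset T(Z)$, one associates to each $g\in\Ga$ an annulus that records how $g$ and $g^{-1}$ concentrate sequences near a pair of points $z_{\pm}\in Z$, using the basic convergence-action dichotomy described before Theorem~\ref{thm:ne-con-act}. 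One then builds a graph $X$ whose vertices are $\Ga$-translates of $K$, with edges encoding linking of annuli, on which $\Ga$ acts properly discontinuously and cocompactly.

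The main obstacle, and the technical heart of Bowditch's paper, is to verify that $X$ is Gromov-hyperbolic and that its visual boundary is $\Ga$-equivariantly homeomorphic to $Z$. This requires a careful combinatorial induction on chains of nested annuli, recovering the key features of hyperbolic geometry (thin triangles, a Morse-type lemma, the visual topology on $\pa X$) from only the crossing/nesting axioms, with no geometric input on $Z$ beyond the purely dynamical one. Once this is accomplished, the Švarc--Milnor lemma applied to the action $\Ga\times X\to X$ yields that $\Ga$ is word-hyperbolic and quasi-isometric to $X$; the boundary identification then gives $\pa\Ga\cong Z$ equivariantly, and non-elementariness is automatic from $\card(Z)\ge 3$.
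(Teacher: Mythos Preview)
The paper does not supply a proof of this statement: it is quoted as Bowditch's theorem and attributed to \cite{Bowditch}, with no argument given. So there is nothing in the paper to compare your sketch against.

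As a sketch of Bowditch's own proof your outline is in the right spirit (annulus systems, recovering hyperbolicity from purely dynamical/combinatorial axioms, then \v{S}varc--Milnor), but one detail is off. Bowditch does not build a Cayley-type graph on $\Ga$-translates of a compact fundamental domain $K\subset T(Z)$. Instead he uses the annulus system to define a crossratio on $Z$, from which he gets a $\Ga$-invariant hyperbolic quasimetric directly on the triple space $T(Z)$; hyperbolicity and the boundary identification are then established for $T(Z)$ itself, and \v{S}varc--Milnor is applied to the cocompact proper $\Ga$-action on $(T(Z),\text{this quasimetric})$. Your description of ``a graph $X$ whose vertices are $\Ga$-translates of $K$, with edges encoding linking of annuli'' conflates this with a different construction and would need to be replaced by the quasimetric-on-$T(Z)$ picture if you actually wanted to reproduce the argument.
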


\subsection{Symmetric spaces}\label{sec:sspace}

We collect several definitions regarding the geometry of symmetric spaces that are necessary for our presentation in Sections~\ref{sec:ulattice}, \ref{sec:wordhyp} and \ref{sec:examples}. For more details, readers may refer to \cite{BGS}, \cite{Eber} or \cite{KLP17}*{Section 2}.

Throughout Sections~\ref{sec:sspace} and \ref{sec:anosov}, let $G$ be a semisimple Lie group and $X$ the associated symmetric space.

The visual boundary $\pa X$ of $X$ has a topological spherical building structure, the Tits building associated to $X$. Let $\ad$ denote the model apartment of this building and $W$ the Weyl group acting isometrically on $\ad$. We will fix a chamber $\sid\subset\ad$ and call it the \emph{spherical model Weyl chamber}; it is a fundamental domain for the $W$-action on $\ad$. Let $w_0\in W$ denote the unique element sending $\sid$ to the opposite chamber $-\sid$. Then the \emph{opposition involution} $\iota$ of the model chamber $\sid$ is defined as $\iota=-w_0$.

We denote by $\De:=\De_{\mathrm{mod}}$ the \emph{model Weyl chamber} corresponding to $\sid$, a fundamental domain for the $W$-action on the model (maximal) flat $F_{\mathrm{mod}}$ of $X$. We will use the notation $d_\De$ for the $G$-invariant 
\emph{$\De$-valued distance function} on $X$.

Consider the induced action of $G$ on $\pa X$. Every orbit intersects every chamber exactly once, so there is a natural identification $\pa X/G\cong\sid$. The projection $\theta:\pa X\to\pa X/G$ is called the \emph{type} map. Let $\tad$ be a face of $\sid$. We will assume, in what follows, that the simplex $\tad$ is $\iota$-invariant. The \emph{$\tad$-flag manifold} $\Flag(\tad)$ is the space of simplices of type $\tad$ in $\pa X$. It has a structure of a compact smooth manifold and can be identified with the quotient space $G/P_\tau$, where $P_\tau<G$ is the stabilizer subgroup of a simplex $\tau\subset\pa X$ of type $\theta(\tau)=\tad$. Two simplices $\tau_1$ and $\tau_2$ in $\pa X$ are said to be \emph{antipodal} if they are opposite in an apartment containing both of them; their types are related by $\theta(\tau_2)=\iota\theta(\tau_1)$. A subset $E$ of $\Flag(\tad)$ is said to be \emph{antipodal} if any two distinct elements of $E$ are antipodal. A map into $\Flag(\tad)$ is \emph{antipodal} if it is injective and has antipodal image. We shall always assume that $\Flag(\tad)$ is equipped with an auxiliary Riemannian metric.

Let $\tau\subset\pa X$ be a simplex of type $\tad$. The \emph{star} $\st(\tau)\subset\pa X$ is the union of all chambers containing $\tau$. We denote by $\partial\,\st(\tau)$ the union of all simplices in $\st(\tau)$ which do not contain $\tau$, and define $\ost(\tau):=\st(\tau)-\partial\,\st(\tau)$. We denote by $W_{\tad}$ the $W$-stabilizer of $\tad$. A subset $\Theta\subset\sid$ is \emph{$W_\tad$-convex} if its symmetrization $W_\tad\Theta\subset\ad$ is convex; for such a subset $\Theta$ we define $\st_\Theta(\tau):=\st(\tau)\cap\theta^{-1}(\Theta)$.

For $x\in X$ and $S\subset\pa X$ the \emph{Weyl cone} $V(x,S)\subset X$ is defined to be the union of rays $x\xi$ ($\xi\in S$). For a flat $f\subset X$ the \emph{parallel set} $P(f)$ is the union of all flats $f'$ such that $\pa f'=\pa f$. For a pair of antipodal simplices $\tau_1,\tau_2\subset\pa X$ we define $P(\tau_1,\tau_2)$ to be the parallel set of the unique minimal flat whose ideal boundary contains $\{\tau_1,\tau_2\}$; equivalently, it is the union of all geodesics in $X$ which are forward/backward asymptotic to points in $\tau_1$ and $\tau_2$. 

\begin{definition}[Finsler geodesic]
A \emph{$\tad$-Finsler geodesic} is a continuous path $c:I\to X$ contained in a parallel set $P(\tau_1,\tau_2)$ for $\tau_1,\tau_2\in\Flag(\tad)$ such that $c(t_2)\in V(c(t_1),\st(\tau_2))$ for all subintervals $[t_1,t_2]\subset I$. It is \emph{uniformly $\tad$-regular} if $c(t_2)\in V(c(t_1),\st_\Theta(\tau_2))$ for some $W_\tad$-convex compact subset $\Theta\subset\mathrm{int}_\tad(\sid)$.
\end{definition}

Let $xy$ be a \emph{$\tad$-regular} oriented geodesic segment in $X$ and $x\xi$ the geodesic ray extending $xy$. We denote $\tau(xy)$ the unique simplex $\tau\in\Flag(\tad)$ such that $\xi\in\ost(\tau)$. Let $\Theta\subset\mathrm{int}_\tad(\sid)$. Then $xy$ is \emph{$\Theta$-regular} if $\theta(\xi)\in\Theta$. If $x_1x_2$ is $\Theta$-regular, we define its \emph{$\Theta$-diamond} by
\[
\Diamond_\Theta(x_1,x_2)=V(x_1,\st_\Theta(\tau(x_1x_2)))\cap V(x_2,\st_\Theta(\tau(x_2x_1))).
\]

\begin{definition}[Morse quasigeodesic]
A (discrete) quasigeodesic $q:I\cap\Z\to X$ is \emph{$(\Theta,R)$-Morse} if for every subinterval $[t_1,t_2]\subset I$ the subpath $q|_{[t_1,t_2]\cap\Z}$ is contained in the $R$-neighborhood of the diamond $\Diamond_\Theta(x_1,x_2)$ such that $d(x_i,q(t_i))\le R$ for $i=1,2$. 
\end{definition}

It is shown in \cite{KLP17}*{Theorem 5.53} that $(\Theta,R)$-Morse quasigeodesic rays (resp. lines) are uniformly Hausdorff close to a uniformly $\tad$-regular Finsler geodesic rays (resp. lines).

\subsection{Anosov subgroups}\label{sec:anosov}

We recall the definitions of \emph{Anosov} and \emph{non-uniformly Anosov} subgroups given by Kapovich, Leeb and Porti in \cite{KLP17}*{Definitions 5.43 and 5.62}. Many other equivalent characterizations of the Anosov subgroups are established in \cite{KLP17}*{Theorem 5.47} and their equivalence with the original definitions given by Labourie \cite{Lab} and Guichard and Wienhard \cite{GW} is proven in \cite{KLP17}*{Section 5.11}.

\begin{definition}\label{def:anosov}
Let $\tad\subset\sid$ be an $\iota$-invariant face. A subgroup $\Ga$ of a semisimple Lie group $G$ is \emph{$\tad$-boundary embedded} if
\begin{enumerate}[label=(\alph*),nosep,leftmargin=*]
\item
it is a hyperbolic group;
\item
there is an antipodal $\Ga$-equivariant continuous map (called a \emph{boundary embedding})
\[
\psi:\pa\Ga\to\Flag(\tad).
\]
\end{enumerate}
A $\tad$-boundary embedded subgroup $\Ga<G$ with a boundary embedding $\psi$ is \emph{$\tad$-Anosov} if
\begin{enumerate}[label=(\alph*),nosep,leftmargin=*,start=3]
\item
for every $\xi\in\pa\Ga$ and for every geodesic ray $r:\N_0\to\Ga$ starting at $\id\in\Ga$ and asymptotic to $\xi$, the expansion factor (see \eqref{eqn:ep}) satisfies
\begin{align*}
\ef(r(k)^{-1},\psi(\xi))\ge A\, e^{C k}
\end{align*}
for $k\in\N_0$ with constants $A,C>0$ independent of the point $\xi$ and the ray $r$.
\end{enumerate}
A $\tad$-boundary embedded subgroup $\Ga<G$ with a boundary embedding $\psi$ is \emph{non-uniformly $\tad$-Anosov} if
\begin{enumerate}[label=(\alph*),nosep,leftmargin=*,start=4]
\item
for every $\xi\in\pa\Ga$ and for every geodesic ray $r:\N_0\to\Ga$ starting at $\id\in\Ga$ and asymptotic to $\xi$, the expansion factor satisfies
\begin{align*}
\sup_{k\in\N_0}\,\ef(r(k)^{-1},\psi(\xi))=+\infty.
\end{align*}
\end{enumerate}
A $\tad$-boundary embedded subgroup $\Ga<G$ is said to be \emph{non-elementary} if it is a non-elementary hyperbolic group.
\end{definition}

\begin{remark}\phantomsection
\begin{enumerate}[label=(\alph*),nosep,leftmargin=*]
\item
In fact, for $\tad$-boundary embedded subgroups, the conditions (c) and (d) are equivalent; see \cite{KLP17}*{Theorem 5.47}. For the purpose of this paper, the definition of non-uniformly $\tad$-Anosov subgroups will suffice.
\item
A $\tad$-Anosov subgroup $\Ga<G$ may have other boundary embeddings $\varphi:\pa\Ga\to\Flag(\tad)$ besides the map $\psi$ which appears in the definition; see \cite{KLP14}*{Example 6.20}. However, the boundary embedding $\psi$ as in the conditions (c) and (d) is unique; its image is the \emph{$\tad$-limit set} of $\Ga$ in $\Flag(\tad)$,
\[
\psi(\pa\Ga)= \La_\Ga(\tad)\subset\Flag(\tad).
\]
We thus will refer to the map $\psi$ as the \emph{asymptotic embedding} for $\Ga< G$.
\end{enumerate}
\end{remark}

\section{Expansion and meandering hyperbolicity}\label{sec:statement}

Throughout Sections~\ref{sec:statement} and \ref{sec:proof}, we let $(M,d)$ be a proper geodesic metric space and suppose that a discrete group $\Ga$ acts continuously on $M$ with a non-empty invariant compact subset $\La\subset M$, no point of which is isolated in $M$.

\begin{remark}
Note that we do not assume faithfulness of the action of $\Ga$ on $\La$ and even on $M$.
\end{remark}

\noindent In this situation, one considers two conditions on $\rho$ called the \emph{expansion} and \emph{meandering hyperbolicity} conditions, respectively. The meandering hyperbolicity condition will require the expansion condition.

In the present section, we define the expansion condition, and draw the key Lemma~\ref{lem:nest} as well as its various consequences (Sections~\ref{sec:expansion} and \ref{sec:encoding}). We then define the meandering hyperbolicity condition (Section~\ref{sec:hyperbolicity}).

\subsection{The expansion condition}\label{sec:expansion}

In order to define the expansion condition we need a little preparation.

Let $f$ be a homeomorphism of $M$. Given $\la>1$ and $U\subset M$, we say $f$ is \emph{$(\la,U)$-expanding} (or \emph{$\la$-expanding on $U$}) if
\[
d(f(x),f(y))\ge\la\cdot d(x,y)
\]
for all $x,y\in U$. In this case, we also say $U$ is a \emph{$(\la,f)$-expanding subset}. Note that $f$ is $(\la,U)$-expanding if and only if $f^{-1}$ is \emph{$(\la^{-1},f(U))$-contracting}, that is,
\[
d(f^{-1}(x),f^{-1}(y))\le\frac{1}{\la}\cdot d(x,y)
\]
for all $x,y\in f(U)$.

Given $\de>0$, a $(\la,U)$-expanding homeomorphism $f$ is said to be \emph{$(\la,U;\de)$-expanding} if
\[
B_{\la\eta}(f(x))\subset f(B_\eta(x))\;\textup{ whenever }B_\eta(x)\subset U\textup{ and }\eta\le\de.
\]
Clearly, if $f$ is $(\la,U;\de)$-expanding then it is also $(\la,U;\de')$-expanding for every $\de'\le\de$. 

\begin{lemma}
If $M$ is a geodesic metric space, then every $(\la,U)$-expanding homeomorphism is also $(\la,U;\de)$-expanding for every $\de$.
\end{lemma}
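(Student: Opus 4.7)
The plan is to exploit the geodesic structure of $M$ to run a continuity argument: given $z$ in the target ball, connect $f(x)$ to $z$ by a geodesic in $M$, pull it back by $f^{-1}$, and use the expansion inequality along the pulled-back path for as long as it stays inside $B_\eta(x)\subset U$. The expansion estimate itself will then prevent the path from ever leaving that ball before time $L = d(f(x),z)$, which gives the desired containment.

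In detail, suppose $B_\eta(x)\subset U$ and fix $z \in B_{\lambda\eta}(f(x))$. Set $L := d(f(x),z) < \lambda\eta$, choose a unit-speed geodesic $\gamma\colon [0,L]\to M$ with $\gamma(0)=f(x)$, $\gamma(L)=z$, and let $\alpha := f^{-1}\circ\gamma$; note $\alpha(0)=x$ and $\alpha(L)=f^{-1}(z)=:y$. Introduce
\[
T := \sup\bigl\{\,t\in[0,L] : d(x,\alpha(s))<\eta\textup{ for every }s\in[0,t]\,\bigr\},
\]
which is positive by continuity of $\alpha$ and the fact that $\alpha(0)=x$.

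For every $s\in[0,T)$ we have $\alpha(s)\in B_\eta(x)\subset U$, and also $x\in U$, so the $(\lambda,U)$-expansion hypothesis applies to the pair $(x,\alpha(s))$:
\[
s \;=\; d(f(x),\gamma(s)) \;=\; d(f(x),f(\alpha(s))) \;\geq\; \lambda\cdot d(x,\alpha(s)).
\]
Hence $d(x,\alpha(s))\le s/\lambda$ on $[0,T)$, and by continuity $d(x,\alpha(T))\le T/\lambda$. If $T<L$, then $d(x,\alpha(T))\le T/\lambda<L/\lambda<\eta$, so by continuity of $s\mapsto d(x,\alpha(s))$ there is $\varepsilon>0$ with $d(x,\alpha(s))<\eta$ throughout $[0,T+\varepsilon]$, contradicting the maximality of $T$. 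Therefore $T=L$ and $d(x,y)\le L/\lambda<\eta$, so $y\in B_\eta(x)$ and $z=f(y)\in f(B_\eta(x))$, as required.

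I do not foresee a genuine obstacle here — all of the content is in having \emph{some} continuous path joining $f(x)$ to $z$ to which the expansion inequality can be applied pointwise, and geodesy supplies exactly that. The only subtlety to watch is that the expansion estimate requires \emph{both} arguments to lie in $U$, which is why the continuity/sup argument is needed rather than a one-shot application to the endpoints $x$ and $y$; without control on the path, $y$ itself is not a priori known to belong to $U$. Note also that the hypothesis $\eta\le\delta$ plays no role in the argument, which is consistent with the conclusion holding for every $\delta$.
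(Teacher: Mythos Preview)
Your proof is correct and takes essentially the same approach as the paper: both connect $f(x)$ to the target point by a geodesic in $M$ and use the expansion inequality to control the pullback. The paper phrases this as a contradiction (take $y\in B_{\la\eta}(f(x))\setminus f(B_\eta(x))$, find the first point $z$ where the geodesic leaves $f(B_\eta(x))$, and observe $d(f(x),z)\ge\la\,d(x,f^{-1}(z))=\la\eta$), whereas your $\sup$ argument is a direct unpacking of the same boundary-crossing idea; in fact your version is slightly cleaner in that the expansion inequality is only applied to pairs lying strictly inside $B_\eta(x)\subset U$, sidestepping any appeal to continuity of the inequality on $\ov U$.
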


\begin{proof}
Suppose to the contrary that there exists a point
\[
y\in B_{\la\eta}(f(x))\setminus f(B_\eta(x)).
\]
Let $c$ be a geodesic path in $M$ connecting $f(x)$ and $y$; the length of this geodesic is less than $\la\eta$. 
Since $f(B_\eta(x))$ is open in $M$, the path $c$ crosses the boundary of $f(B_\eta(x))$ 
at a point $z$. Because $f$ is $\la$-expanding on the set $U$ containing $B_\eta(x)$ and 
$f^{-1}(z)\in \partial B_\eta(x)$, we have
\[
d(f(x), y)\ge d(f(x), z)\ge \la d(x, f^{-1}(z))=\la \eta.  
\]
This contradicts the assumption that $y\in B_{\la\eta}(f(x))$, that is, $d(f(x), y)< \la\eta$. 
\end{proof}

The implication in the lemma does not hold for general metric spaces. As a simple example, consider a compact metric space $M$ of diameter $D$, which is not a singleton, but contains an isolated point $x$. Consider $f=\mathrm{Id}_M$ and take a neighborhood $U$ of $x$ and $\eta>0$ such that $\{x\}=B_\eta(x)\subset U=\{x\}$. Then $f$ is $(\la,U)$-expanding for {\em any} $\la>1$. Thus, taking $\la$ such that $\la\eta\ge D$, we see that
\[
M=B_{\la\eta}(f(x))\subsetneq f(B_\eta(x))=B_\eta(x)=\{x\}.
\]

However, we note the following fact:

\begin{lemma}\label{lem:smallU}
Suppose that $f$ is $(\la,U)$-expanding, where $U$ is a bounded open subset of $M$. Then for every $\de>0$ there exists $\de'= \de'_U>0$ such that $f$ is $(\la,U';\de')$-expanding with $U':=\intr U^\de \subset U$.
\end{lemma}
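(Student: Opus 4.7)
The plan is to use properness of $M$ together with the global homeomorphism hypothesis on $f$ to reduce the required ball inclusion to a simple preimage calculation. First I would observe that $\ov{U'}$ is compact and sits inside $U$: if $x\in U^\de$ then $d(x,M\setminus U)\ge\de$, so $U^\de$ (hence $U'$) is contained in the closed set $\{x:d(x,M\setminus U)\ge\de\}\subset U$, and being bounded in the proper space $M$ its closure is compact. Since $f$ is a homeomorphism of $M$, the image $f(U)$ is open and $f(\ov{U'})\subset f(U)$ is compact, so $2r:=d(f(\ov{U'}),\,M\setminus f(U))>0$. I would then set $\de':=r/\la$.

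To verify $(\la,U';\de')$-expansion, pick any $x$ with $B_\eta(x)\subset U'$ and $\eta\le\de'$. Since $x\in U'$, the ball $B_{\la\eta}(f(x))$ is contained in $B_{2r}(f(x))\subset f(U)$. Hence any $y\in B_{\la\eta}(f(x))$ equals $f(z)$ for a unique $z\in U$, and the $(\la,U)$-expansion inequality gives
\[
\la\cdot d(x,z)\le d(f(x),f(z))=d(f(x),y)<\la\eta,
\]
so $z\in B_\eta(x)$ and therefore $y\in f(B_\eta(x))$. Since $U'\subset U$, the expansion inequality on $U'$ itself is inherited for free, completing the argument.

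The main subtlety, which motivates this route, is that the geodesic argument from the previous lemma is unavailable in a general metric space (as the isolated-point counterexample just recorded shows), so one cannot deduce $B_{\la\eta}(f(x))\subset f(B_\eta(x))$ from expansion alone. The device is to use compactness of $\ov{U'}$ inside the open set $U$, combined with $f$ being a \emph{global} homeomorphism of $M$, to force small balls about $f(x)$ to stay inside $f(U)$, where $f$ can be inverted and the expansion inequality applied directly.
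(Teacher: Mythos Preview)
Your argument is correct and essentially the same as the paper's. Both proofs use properness of $M$ to get a compact set (you take $\ov{U'}$, the paper takes $U^\de$ itself, which is already closed), compute the positive distance from its $f$-image to $M\setminus f(U)$, and then use that $f^{-1}$ is $(1/\la)$-contracting on $f(U)$ to pull $B_{\la\eta}(f(x))$ back into $B_\eta(x)$; the only cosmetic differences are your harmless factor of $2$ and the paper's extra $\min$ with $\de$ in defining $\de'$.
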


\begin{proof} 
Since $f(U^\de)$ is compact, we have $\ep:=d(f(U^\de), M-f(U))>0$. Now we let $\de':=\la^{-1}\cdot\min\{\de,\ep\}$. If $\eta\le\de'$ and $B_{\eta}(x)\subset U'$, in particular, $f(x)\in f(U^\de)$, then we have $B_{\la\eta}(f(x))\subset B_\ep(f(x))\subset f(U)$ and hence $f^{-1}[B_{\la\eta}(f(x))]\subset B_{\eta}(x)$, since $f^{-1}$ is $(1/\la,f(U))$-contracting. Therefore, we conclude that $B_{\la\eta}(f(x))\subset f(B_{\eta}(x))$.
\end{proof}

\medskip 
We are now ready to define the expansion condition. Let $\Homeo(M)$ denote the group of locally bi-Lipschitz homeomorphisms of $M$, that is, homeomorphisms of $M$ whose restrictions to compact subsets are bi-Lipschitz.

\begin{definition}[Expansion]\label{def:expansion}
Let $\rho:\Ga\to\Homeo(M)$ be an action with a non-empty compact invariant subset $\La$, no point of which is isolated in $M$. The action $\rho$ is said to be \emph{expanding at $\La$} if there exist
\begin{itemize}[parsep=0pt,itemsep=0pt]
\item
a finite index set $\I$,
\item
a cover $\U=\{U_i\subset M \mid i\in\I\}$ of $\La$ by open (and possibly empty) subsets $U_i$,
\item
a map $s:\I\to\Ga$, $i\mapsto s_i$,
\item 
and positive real numbers $L\ge\la>1$ and $\de\le\de_\U$
\end{itemize}
such that, for every $i\in\I$, the map $\rho(s_i^{-1})$ is
\begin{enumerate}[label=(\roman*),parsep=0pt,itemsep=0pt]
\item $L$-Lipschitz on $N_{\de}(\La)$, and
\item $(\la,U_\al;\de)$-expanding,
\end{enumerate}
and that the image $\vS:=\{s_i \mid i\in\I\}\subset\Ga$ of the map $s$ is a symmetric generating set of the group $\Ga$.

In this case, the datum $\D:=(\I,\U,\vS,\allowbreak\de,L,\la)$ (or, occasionally, any subset thereof) will be referred to as an \emph{expansion datum} of $\rho$.
\end{definition}

If $\rho:\Ga\to\Homeo(M)$ is an expanding action, let $|\cdot|_\vS$ (resp. $d_\vS$) denote the word length (resp. the word metric) on the group $\Ga$ with respect to the generating set $\vS$ from Definition~\ref{def:expansion}. Then the $L$-Lipschitz property (i) implies that
\begin{align}\label{eqn:lip}
\textup{the map }\rho(g)\textup{ is }L^k\textup{-Lipschitz on } N_{\de/L^{k-1}}(\La)
\end{align}
for every $g\in\Ga$ with $|g|_\vS=k\in\N$.

\begin{remark}\phantomsection\label{rem:expansion}
A few more remarks are in order.
\begin{enumerate}[label=(\alph*),nosep,leftmargin=*]
\item
Expanding actions appear naturally in the context of Anosov actions on flag manifolds \cite{KLP17}*{Definition 3.1} and hyperbolic group actions on their Gromov boundaries equipped with visual metrics \cite{Coo}. See Section~\ref{sec:wordhyp} for further discussion.
\item
The symmetry of the generating set $\vS$ means that $s\in\vS$ if and only if $s^{-1}\in\vS$. This implies that $\rho(s_i)$ is $L$-bi-Lipschitz on $N_{\de/L}(\La)$ for all $i\in\I$.
\item
Suppose that $M$ is a Riemannian manifold and the action $\rho$ is by $C^1$-diffeomorphisms. Then, $\rho$ is expanding provided that for every $x\in \La$ there exists $g\in\Ga$ such that $\ef(\rho(g),x)>1$ (see \eqref{eqn:ep}). Indeed, by compactness of $\La$, there exists a finite cover $\U=\{U_i \mid i\in\I\}$ of $\La$ and a collection $\vS$ of group elements $s_i\in\Ga$ such that each $\rho(s_i^{-1})$ is $(\la,U_i;\de_\U)$-expanding. By adding, if necessary, extra generators to $\vS$ with empty expanding subsets, we obtain the required symmetric generating set of $\Ga$.
\item
If $U_i=\emptyset$ for some $i\in\I$ then the condition (ii) is vacuous for this $i$. Otherwise, it implies that the inverse $\rho(s_i)$ is $(\la^{-1},\rho(s_i^{-1})[U_i])$-contracting.
\item
The condition (ii) can be relaxed to the mere $(\la,U_i)$-expanding condition. Namely, we may first modify the cover $\U=\{U_i\mid i \in\I\}$ so that $U_i$ are all bounded. Then, in view of Lemma~\ref{lem:smallU}, we can modify it further to $\U'=\{U'_i \mid  i \in \I \}$, where $U'_i:=\intr U^\de_i$ as in the lemma. For each $i\in\I$ we also let $\de'_{U_i}$ denote the number $\de'_{U}$ given by the lemma, and set
\[
\de':= \min\{\de_{\U'},\de'_{U_i}\mid i\in\I\}.
\]
After such modification $\U'$ is still an open cover of $\La$ and the maps $\rho(s_i^{-1})$ are $(\la,U'_i;\de')$-expanding.
\item
The map $s:\I\to\vS\subset\Ga$ is not necessarily injective: the $\rho$-image of an element of $\Ga$ can have several expansion subsets. See Examples~\ref{ex:E}, \ref{ex:E0} and \ref{ex:E1}.
\item
Clearly, the properties (i) and (ii) also hold on the closures $\ov{N}_\de(\La)$ and $\ov{U_i}$, respectively.
\end{enumerate} 
\end{remark}

It is possible that an action $\rho:\Ga\to\Homeo(M)$ is expanding with different expansion data. In such a case we define a relation on the expansion data as follows.

\begin{definition}\label{def:refine}  Let $\rho:\Ga\to\Homeo(M)$ be expanding at $\La$. Let $\D_0=(\I_0,\U_0,\vS_0,\allowbreak\de_0,L_0,\la_0)$ and $\D_1=(\I_1,\U_1,\vS_1,\allowbreak\de_1,L_1,\la_1)$ be its expansion data, where $\U_k=\{U_{k,i} \mid i\in\I_k\}$ and $\vS_k=\{s_{k,i} \mid i\in\I_k\}$ for $k=0,1$. We write $\U_0 \prec \U_1$ (resp. $\vS_0 \prec \vS_1$) if $\I_0\subset\I_1$ and $U_{0,i} \subset U_{1,i}$ (resp. $s_{0,i}=s_{1,i}$) for every $i\in \I_0$. Furthermore, we write $\D_0\prec\D_1$ and say $\D_1$ is a \emph{refinement} of $\D_0$ if  
\[
\I_0\subset\I_1,\; \U_0\prec\U_1,\; \vS_0\prec\vS_1,\; \de_0>\de_1,\; L_0\le L_1,\; \la_0\ge\la_1.
\]
\end{definition}

Note the strict inequality $\de_0>\de_1$, which will be pertinent to Section~\ref{sec:cont}. 

\begin{definition}\label{def:t-refine}
For any expansion datum $\D$ we can define a refinement by replacing $\de$ with a strictly smaller positive number and leaving other objects intact. We say such a refinement is \emph{trivial}.
\end{definition}

\subsection{Toy examples of expanding actions}\label{sec:toy}

The most basic example of an expanding action is a cyclic hyperbolic group of M\"{o}bius transformations acting on the unit circle $S^1$. Namely, we consider the Poincar\'e (conformal) disk model of $\H^2$ in $\C=\R^2$ and endow $M=S^1=\pa\H^2$ with the induced Euclidean metric. Let $\ga\in \Isom(\H^2)$ be a hyperbolic element and $\Ga= \langle \ga \rangle \cong \Z$. The \emph{limit set} of the group $\Ga$ is $\La=\{\ga_-, \ga_+\}$, with $\ga_+$ the attracting and $\ga_-$ the repelling fixed points of $\ga$ in $M$. Expanding subsets $U_\al, U_\be$ for $\ga, \ga^{-1}$ are sufficiently small arcs containing $\ga_-, \ga_+$, respectively.

Explicit expanding subsets can be found by considering the isometric circles $I_{\ga}$ and $I_{\ga^{-1}}$ of $\ga$ and $\ga^{-1}$, respectively. See Figure~\ref{fig:toy}(Left). (For the definition of isometric circles (or spheres) and their relation to Ford and Dirichlet fundamental domains, we refer to \cite{Maskit}*{IV.G}.) The arc of $I_\ga$ in $\H^2$ is a complete geodesic which is the perpendicular bisector of the points $o$ and $\ga^{-1}(o)$, where $o$ denotes the Euclidean center of the Poincar\'e disk. Then we obtain a $(\la,\ga)$-expanding subset $U_\al$ (with $\la>1$) by cutting down slightly the open arc of $S^1=\pa\H^2$ inside $I_\ga$. See also the discussion in the beginning of Section~\ref{sec:ex1}.

\begin{figure}[ht]
\labellist
\pinlabel {$o$} at 386 156
\pinlabel {$\ga$} at 374 256
\pinlabel {\scriptsize $\ga(o)$} at 280 196
\pinlabel {\scriptsize $\ga^{-1}(o)$} at 480 196
\pinlabel {$I_\ga$} at 434 400
\pinlabel {$I_{\ga^{-1}}$} at 330 400
\pinlabel {$U_\al$} at 560 200
\pinlabel {$U_\be$} at 196 200
\pinlabel {$\la_-$} at 520 290
\pinlabel {$\la_+$} at 234 290
\pinlabel {$\wt{U}_{\al_1}$} at 1170 390
\pinlabel {$\wt{U}_{\be_1}$} at 1060 390
\pinlabel {$\wt{U}_{\al_2}$} at 950 200
\pinlabel {$\wt{U}_{\be_2}$} at 1004 104
\pinlabel {$\wt{U}_{\al_3}$} at 1206 104
\pinlabel {$\wt{U}_{\be_3}$} at 1270 200
\endlabellist
\centering
\includegraphics[width=0.9\textwidth]{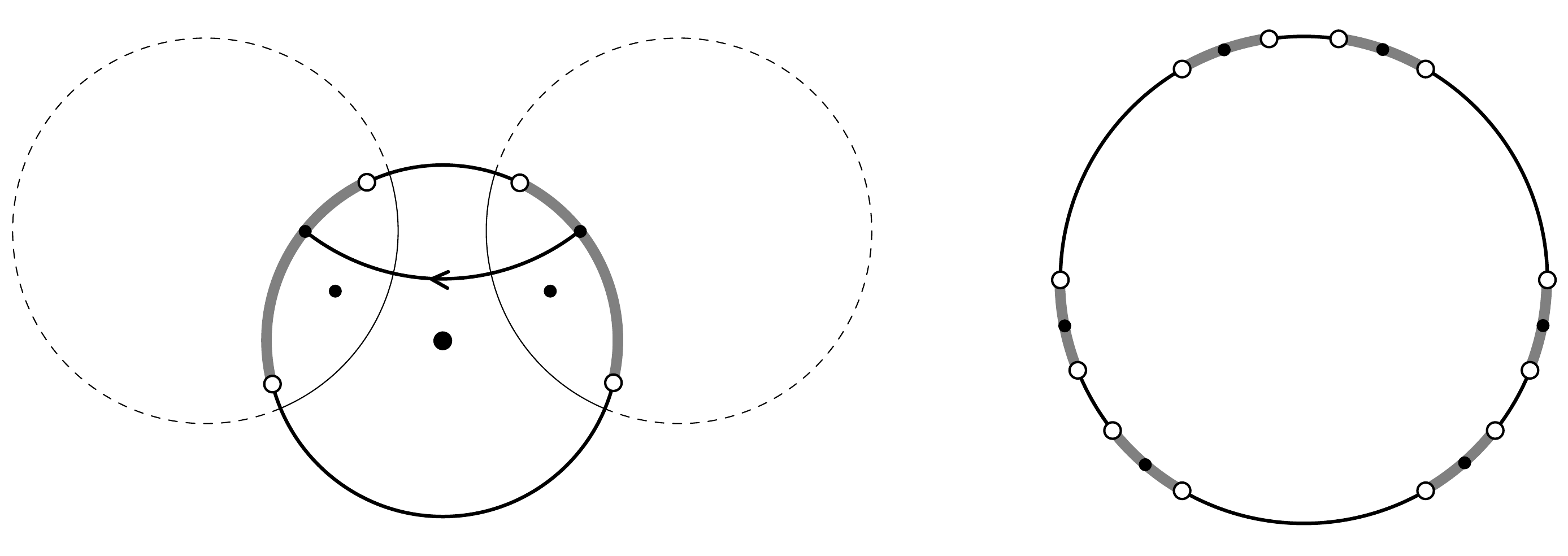}
\caption{Expanding arcs for $\ga$ and $\ga^{-1}$ are colored gray in both examples.\\(Left) A hyperbolic transformation $\ga$ of $\H^2$. (Right) A covering of degree $3$.}
\label{fig:toy}
\end{figure}

\begin{example}[$k$-fold non-trivial covering]\label{ex:E} 
A more interesting example is obtained by taking a degree $k >1$ covering $p: S^1\to S^1$ of the preceding example. See Figure~\ref{fig:toy}(Right) for the case of $k=3$. The preimage of $\La=\{\ga_-, \ga_+\}$ consists of $2k$ points and we can lift $\ga$ to a diffeomorphism $\wt\ga: S^1\to S^1$ fixing all these points. Let $\wt\rho: \Ga=\langle \ga \rangle \cong\Z\to\Diff(S^1)$ be the homomorphism sending the generator $\ga$ of $\Ga$ to $\wt\ga$. The preimages $p^{-1}(U_\al)$, $p^{-1}(U_\be)$ break into connected components
\[
\wt{U}_{\al_i},\;\wt{U}_{\be_i}\quad(i=1,\ldots,k)
\]
and the mappings $\wt\ga$ and $\wt\ga^{-1}$ act as expanding maps on each of these components. Therefore, we set
\[
\I= \{\al_1,\ldots,\al_k, \be_1,\ldots,\be_k\}
\]
and define the map $s:\I\to \vS$,
\[
\al_i\mapsto s_{\al_i}= \ga,\quad \be_i\mapsto s_{\be_i}= \ga^{-1}
\]
from this index set to the generating set $\vS=\{\ga, \ga^{-1}\}$ of $\Ga$. Then $\wt{U}_{\al_i}$, $\wt{U}_{\be_i}$ will be expanding subsets for the actions $\wt\rho(s_{\al_i})$, $\wt\rho(s_{\be_i})$ on $S^1$. (Note that the entire preimage $p^{-1}(U_\al)$ (resp. $p^{-1}(U_\be)$) is \emph{not} an expanding subset for the action $\wt\rho(s_{\al_i})$ (resp. $\wt\rho(s_{\be_i})$).)

The same construction works for surface group actions; see Example~\ref{ex:E0}.
\end{example}

A trivial example where $U_i=\emptyset$ for an index $i\in\I$ is the action of a cyclic group $\Ga= \langle \ga \rangle \cong \Z$ generated by a loxodromic transformation $\ga(z)=mz$, $|m|>1$, on $M=\C$ (with the standard Euclidean metric) and $\La=\{0\}$. Any open subset of $M$ containing $0$ is an expanding subset for $\ga$, while the expanding subset for $\ga^{-1}$ is empty.

\subsection{Expansion enables encoding}\label{sec:encoding}

Given an expanding action $\rho:\Ga\to\Homeo(M)$ with an invariant subset $\La$ the expansion condition (Definition~\ref{def:expansion}) enables us to encode points of $\La$ by sequences in the finite index set $\I$. Roughly speaking, for each $x\in\La$, we keep zooming in toward $x$ using the ``microscope,'' namely, the expanding maps $\rho(s_{\al(k)}^{-1})$. This is the heuristic Sullivan himself used in his talk according to F. Bonahon. (We thank Bonahon for telling us his personal anecdote.)

\begin{definition}[Codes]\label{def:code}
Suppose an action $\rho:\Ga\to\Homeo(M)$ is expanding at $\La$ with a datum $\D=(\I,\U,\vS,\allowbreak\de,L,\la)$. Let $x\in\La$ and fix a real number $\eta$ such that $0<\eta\le\de$. Given a sequence $\al:\N_0\to\I,\;k\mapsto\al(k)$, define a sequence $p:\N_0\to\La,\;k\mapsto p_k$ inductively by
\begin{align*}
p_0&=x,\\
p_{k+1}&=\rho(s_{\al(k)}^{-1})(p_k).
\end{align*}
The sequence $\al$ (or the pair $(\al,p)$ of sequences) is called a \emph{$(\D,\eta)$-code} for $x$ if
\[B_\eta(p_k)\subset U_{\al(k)}\]
for every $k\in \N$. (When $\D$ is understood we simply say an \emph{$\eta$-code} in order to emphasize the use of $\eta$-balls). A $(\D,\eta)$-code $\al$ for $x$ is said to be \emph{special} if $\al(0)$ satisfies $B_\eta(x)\subset U_{\al(0)}$. We denote by $\Code_x(\D,\eta)$ the set of all $(\D,\eta)$-codes for $x\in\La$.
\end{definition}

Every $\eta$-ball centered at a point on $\La$ is contained in some member of $\U$ since $\eta\le \de \le \de_{\U}$. This enables us to construct codes. A code gives rise to a family of $\eta$-balls centered at points in $\La$ each of which is a $\la$-expanding domain for the action on $M$ of some element of $\Ga$.

\begin{remark}\phantomsection\label{rem:code}
\begin{enumerate}[label=(\alph*),nosep,leftmargin=*]
\item
That we do not require $\al(0)$ to satisfy $B_\eta(x)\subset U_{\al(0)}$ in general is Sullivan's trick, which will be useful to prove equivariance of the conjugacy $\phi$ in Section~\ref{sec:equiv}.
\item
The requirement $B_\eta(p_k)\subset U_{\al(k)}$ implies $U_{\al(k)}\neq\emptyset$ for $k\in\N$. Thus only for the initial value $\al(0)$ of codes $\al$ can we possibly have $U_{\al(0)}=\emptyset$.
\item
Since $\de\le\de_{\U}$, we have $\Code_x(\D,\eta)\neq\emptyset$ for all $x\in\La$ and $0<\eta\le\de$, and special codes always exist for any point. Moreover, if $\eta'\le\eta\le\de$, then $\Code_x(\D,\eta)\subset\Code_x(\D,\eta')$. Indeed, if $(\al,p)$ is an $\eta$-code for $x$, then $B_{\eta'}(p_k)\subset B_\eta(p_k)\subset U_{\al(k)}$ for all $k\in\N$, which implies that $(\al,p)$ is an $\eta'$-code for $x$ as well.
\item
If $\D_0$ and $\D_1$ are expansion data of $\rho$ and $\D_0\prec \D_1$, then for any $\eta\le\de_1\le\de_0$, a $(\D_0,\eta)$-code $\al:\N_0\to\I_0\subset\I_1$ for $x$ may be regarded as a $(\D_1,\eta)$-code. Slightly abusing the notation we write this relation as $\Code_x(\D_0,\eta)\subset\Code_x(\D_1,\eta)$.
\end{enumerate}
\end{remark}

\begin{definition}[Rays]\label{def:ray}
Let $\rho:\Ga\to\Homeo(M)$ be expanding at $\La$ with the datum $\D$. Given a $(\D,\eta)$-code $\al$ for $x\in\La$, the \emph{$(\D,\eta)$-ray} (or simply \emph{$\eta$-ray}) associated to $\al$ is a sequence $c^\al:\N_0\to\Ga$ defined by
\begin{align*}
c^\al_k=s_{\al(0)}s_{\al(1)}\cdots s_{\al(k)}.
\end{align*}
We set $\Ray_x(\D,\eta):=\{c^\al\mid\al\in\Code_x(\D,\eta)\}$ 
and interpret $c$ as a map
\[
c:\Code_x(\D,\eta)\to\Ray_x(\D,\eta),\quad\al\mapsto c^\al.
\] 
\end{definition}

\begin{remark}\phantomsection\label{rem:ray}
\begin{enumerate}[label=(\alph*),nosep,leftmargin=*]
\item
The initial point $c^\al_0= s_{\al(0)}$ of $c^\al$ is an element of $\vS$.
\item 
From the definition of codes, it is easy to check that $\rho(c^\al_k)(p_{k+1})=p_0=x$ for every $k\in \N_0$.
\item
Every ray $c^\al$ defines an edge-path in the Cayley graph $(\Ga,d_\vS)$ of $\Ga$ (with respect to the generating set $\vS$). Note that each word $s_{\al(0)}^{-1}c^\al_k=s_{\al(1)}\cdots s_{\al(k)}$ $(k>1)$ is reduced, since an appearance of $s^{-1}s$ for $s\in\vS$ would imply that the composite map $\rho(s^{-1})\rho(s)$, which is the identity, is $\la^2$-expanding on some non-empty open subset of $M$.
\item
If $(\vS_0,\de_0)=\D_0\prec\D_1=(\vS_1,\de_1)$ are expansion data of $\rho$ and $\eta\le\de_1\le\de_0$, then we have $\Ray_x(\D_0,\eta)\subset\Ray_x(\D_1,\eta)$ as in Remark~\ref{rem:code}(d).
\end{enumerate}
\end{remark}

Now we will see that, for every $x\in\La$, each $\eta$-code for $x$ gives rise to a nested sequence of neighborhoods of 
$x$ whose diameters tend to $0$ exponentially fast.

\begin{lemma}\label{lem:nest}
Let $\rho:\Ga\to\Homeo(M)$ be expanding at $\La$ with a datum $\D=(\I,\U,\vS,\allowbreak\de,L,\la)$ and let $0<\eta\le\de$. If $(\al,p)$ is a $(\D,\eta)$-code for $x\in\La$, then the sequence of neighborhoods of $x$
\begin{align*}
x\in\rho(c^\al_k)[B_\eta(p_{k+1})]\quad(k\in\N_0)
\end{align*}
is nested and exponentially shrinking. More precisely, we have
\begin{enumerate}[label=\textup{(\roman*)},parsep=0ex,itemsep=0.5ex]
\item
$\rho(c^\al_k)[B_\eta(p_{k+1})]\subset\rho(c^\al_{k-1})[B_\eta(p_k)]$ for $k\in\N$, and
\item
$\rho(c^\al_k)[B_\eta(p_{k+1})] \subset B_{L\eta/\la^{k}}(x)$ for all $k\in\N_0$.
\end{enumerate}
Consequently, we have the equality
\[
\{x\}=\bigcap_{k=0}^\infty\rho(c^\al_k)[B_\eta(p_{k+1})].
\]
The map $\rho(c^\al_k)^{-1}$ is $\la^k/L$-expanding on the neighborhood $\rho(c^\al_k)[B_\eta(p_{k+1})]$ of $x$ for all $k\in\N_0$, and the map
\[
\rho(c^\al_k)^{-1}\rho(c^\al_j)=\rho(s_{\al(k)}^{-1}s_{\al(k-1)}^{-1}\cdots s_{\al(j+1)}^{-1})
\]
is $\la^{k-j}$-expanding on $\rho(c^\al_j)^{-1}\big[\rho(c^\al_k)[B_{\eta}(p_{k+1})]\big]$ for all $0\le j< k$.
\end{lemma}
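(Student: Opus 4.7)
The plan is to assemble the lemma entirely from the estimates already carried out in the discussion preceding the statement; no new ideas are required beyond a careful bookkeeping of where each intermediate image lands. Concretely, I would verify in turn: (a) the membership $x\in\rho(c^\al_i)[B_\eta(p_{i+1})]$; (b) the nesting (i); (c) the exponential shrinking (ii); (d) the identification of the intersection as $\{x\}$; and (e) the two expansion-factor statements at the end.

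For (a)--(d), the work is essentially finished. Since $p_0=x$ by construction and $p_{i+1}=\rho(s_{\al(i)}^{-1}\cdots s_{\al(0)}^{-1})(p_0)=\rho(c^\al_i)^{-1}(x)$, we have $x=\rho(c^\al_i)(p_{i+1})\in\rho(c^\al_i)[B_\eta(p_{i+1})]$, and this set is open because $B_\eta(p_{i+1})$ is open and $\rho(c^\al_i)$ is a homeomorphism. Claim (i) is exactly the display \eqref{eqn:nesting}, and claim (ii) is \eqref{eqn:shrink} rewritten using $p_0=x$. The intersection is then forced to equal $\{x\}$ because each set contains $x$, their diameters tend to $0$ by (ii), and $(M,d)$ is Hausdorff.

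The only place that needs genuine thought is (e), the factor-by-factor expansion analysis of
\[
\rho(c^\al_i)^{-1}=\rho(s_{\al(i)}^{-1})\circ\cdots\circ\rho(s_{\al(1)}^{-1})\circ\rho(s_{\al(0)}^{-1}).
\]
The subtle point is that the initial index $\al(0)$ of a code need not satisfy $B_\eta(p_0)\subset U_{\al(0)}$ (Sullivan's trick recorded in Remark \ref{rem:code}), so we cannot invoke the $(\la,U_{\al(0)};\de)$-expansion at the first step. Instead, the $L$-Lipschitz bound of Definition \ref{def:expansion}(i) on $N_\de(\La)\supset B_\eta(p_1)$ yields only a $(1/L)$-expansion for $\rho(s_{\al(0)}^{-1})$ on $\rho(c^\al_i)[B_\eta(p_{i+1})]$. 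For every subsequent factor $\rho(s_{\al(k)}^{-1})$ with $k\ge 1$, however, the nested inclusions \eqref{eqn:nesting} place the intermediate image $\rho(s_{\al(k)}\cdots s_{\al(i)})[B_\eta(p_{i+1})]$ inside $B_\eta(p_k)\subset U_{\al(k)}$, where the full $\la$-expansion is available. Multiplying $i$ such $\la$-factors against the initial $1/L$ gives the advertised $\la^i/L$. The same bookkeeping, now restricted to indices $k\in\{j+1,\ldots,i\}$ (all ``constrained'', hence contributing a genuine $\la$), yields the factor $\la^{i-j}$ for $\rho(c^\al_i)^{-1}\rho(c^\al_j)=\rho(s_{\al(i)}^{-1}\cdots s_{\al(j+1)}^{-1})$ on $\rho(c^\al_j)^{-1}[\rho(c^\al_i)[B_\eta(p_{i+1})]]=\rho(s_{\al(j+1)}\cdots s_{\al(i)})[B_\eta(p_{i+1})]$. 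I do not anticipate any serious obstacle; the main piece of judgement is organizing the argument so that the asymmetric role of Sullivan's trick at the zeroth index is transparent, which explains why the $1/L$ appears in the first expansion bound but not in the relative one.
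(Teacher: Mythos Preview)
Your proposal is correct and matches the paper's approach essentially line for line: the paper proves (i) and (ii) in the running discussion (the displays you cite as \eqref{eqn:nesting} and \eqref{eqn:shrink}) and then simply states the lemma as a summary, leaving the two expansion-factor statements in (e) implicit. Your careful unpacking of (e), in particular the observation that the zeroth step contributes only a $1/L$ via the Lipschitz bound while steps $1,\dots,i$ each contribute a genuine $\la$ because the intermediate images land in $B_\eta(p_k)\subset U_{\al(k)}$, is exactly the justification the paper omits.
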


\begin{figure}[ht]
\labellist
\pinlabel {$p_{k}$} at 58 64
\pinlabel {$p_{k+1}$} at 252 64
\pinlabel {$\rho(s_{\al(k)}^{-1})$} at 130 86
\pinlabel {$\rho(s_{\al(k)})$} at 130 48
\pinlabel {$B_\eta(p_k)$} at 90 28
\pinlabel {$B_\eta(p_{k+1})$} at 240 30
\pinlabel {\scriptsize $\rho(s_{\al(k)})[B_\eta(p_{k+1})]$} at 50 44
\pinlabel {$\rho(s_{\al(k)}^{-1})[B_\eta(p_k)]$} at 304 6
\endlabellist
\centering
\includegraphics[width=0.8\textwidth]{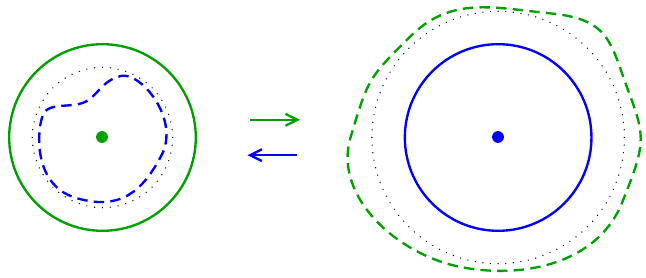}
\caption{Actions of $\rho(s_{\al(k)}^{-1})$ and $\rho(s_{\al(k)})$ for $k\in\N$.}
\label{fig:01}
\end{figure}

\begin{proof}
Suppose $(\al,p)$ is an $\eta$-code for $x\in\La$ and let $k\in\N$. Since $B_\eta(p_k)\subset U_{\al(k)}$ and the homeomorphism $\rho(s_{\al(k)}^{-1})$ is $(\la,U_{\al(k)};\de)$-expanding with $\rho(s_{\al(k)}^{-1})(p_k)=p_{k+1}$, we have by definition
\begin{align}\label{eqn:expand}
B_{\la\eta}(p_{k+1})
\subset \rho(s_{\al(k)}^{-1})[B_\eta(p_k)]
\subset \rho(s_{\al(k)}^{-1})[U_{\al(k)}].
\end{align}
On the other hand, the inverse map $\rho(s_{\al(k)})$ is $(\la^{-1},\rho(s_{\al(k)}^{-1})[U_{\al(k)}])$-contracting as we saw in Remark~\ref{rem:expansion}(d). Since $B_{\eta}(p_{k+1})\subset\rho(s_{\al(k)}^{-1})[U_{\al(k)}]$, we obtain
\begin{align}\label{eqn:contract}
\rho(s_{\al(k)})[B_\eta(p_{k+1})]
\subset B_{\eta/\la}(p_k)
\subset B_\eta(p_k).
\end{align}
See Figure~\ref{fig:01}. By a similar reasoning, we inductively obtain
\begin{align*}
\rho(s_{\al(k-1)}s_{\al(k)})[B_\eta(p_{k+1})]
&\subset B_{\eta/\la^2}(p_{k-1}),\\
&\;\;\vdots \\
\rho(s_{\al(1)}\cdots s_{\al(k-1)}s_{\al(k)})[B_\eta(p_{k+1})]
&\subset B_{\eta/\la^{k}}(p_1).
\end{align*}
Lastly, the map $\rho(s_{\al(0)})$ is $L$-Lipschitz on $B_{\eta/\la^{k}}(p_1)\subset N_\de(\La)$ by Definition~\ref{def:expansion}(i). Thus we see that
\begin{align}\label{eqn:shrink}
\rho(s_{\al(0)}\cdots s_{\al(k-1)}s_{\al(k)})[B_\eta(p_{k+1})]
\subset B_{L\eta/\la^{k}}(p_0).
\end{align}

Since $\rho(c^\al_k)(p_{k+1})=p_0=x$, the inclusion \eqref{eqn:shrink} can be written as
\begin{align*}
x\in\rho(c^\al_k)[B_\eta(p_{k+1})]
\subset B_{L\eta/\la^{k}}(x)
\end{align*}
for all $k\in\N_0$. Moreover, from \eqref{eqn:contract} we have
\begin{align}\label{eqn:nesting}
\begin{aligned}
\rho(c^\al_k)[B_\eta(p_{k+1})]
&=\rho(s_{\al(0)}\cdots s_{\al(k-1)} s_{\al(k)})[B_\eta(p_{k+1})] \\
&\subset \rho(s_{\al(0)}\cdots s_{\al(k-1)})[B_\eta(p_k)]\\
&= \rho(c^\al_{k-1})[B_\eta(p_k)]
\end{aligned}
\end{align}
for all $k\in\N$. From these follows the rest of the lemma.
\end{proof}

A number of corollaries will follow.

\medskip
We say an action $\rho:\Ga\to\mathrm{Homeo}(M)$ with an invariant subset $\La$ has an \emph{expansivity constant $c>0$} if for every distinct pair of points $x,y\in\La$ there exists an element $g\in\Ga$ such that $d(\rho(g)(x),\rho(g)(y))\ge c$. Compare \cite{CP}*{Proposition 2.2.4}.

\begin{corollary}\label{cor:expansivity}
If an action $\rho:\Ga\to\Homeo(M)$ is expanding at $\La$ with a datum $\D=(\de)$, then $\de$ is an expansivity constant of this action.
\end{corollary}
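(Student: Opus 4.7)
The plan is to mimic the classical argument that hyperbolic fixed-point dynamics is expansive: given $x\ne y$ in $\La$, iteratively apply expanding generators to amplify the distance between their images until it first exceeds $\de$. If $d(x,y)\ge\de$ already, take $g=e$, so the substantive case is $d(x,y)<\de$.

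In that case I plan an inductive construction of elements $g_0=\id,g_1,g_2,\ldots\in\Ga$ together with indices $\al(0),\al(1),\ldots\in\I$. At step $k$, if $d(\rho(g_k)x,\rho(g_k)y)\ge\de$ I stop and output $g:=g_k$; otherwise, invariance of $\La$ ensures $\rho(g_k)x\in\La$, and the Lebesgue-number inequality $\de\le\de_\U$ (together with finiteness of $\I$, which lets one pass from a sup to a witness) produces an index $\al(k)\in\I$ with $B_\de(\rho(g_k)x)\subset U_{\al(k)}$; this open ball contains both $\rho(g_k)x$ and $\rho(g_k)y$ by the running hypothesis. Setting $g_{k+1}:=s_{\al(k)}^{-1}g_k$ and invoking the fact that $\rho(s_{\al(k)}^{-1})$ is $(\la,U_{\al(k)})$-expanding (Definition~\ref{def:expansion}(ii)) yields
\[
d(\rho(g_{k+1})x,\rho(g_{k+1})y)\ge\la\cdot d(\rho(g_k)x,\rho(g_k)y),
\]
and by induction $d(\rho(g_k)x,\rho(g_k)y)\ge\la^k d(x,y)$.

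Since $\la>1$ and $d(x,y)>0$, this geometric lower bound exceeds $\de$ after finitely many steps, so the procedure terminates and returns the desired $g$. The only delicate point is verifying at each step that both points sit in a single expanding set of the datum $\D$; this is precisely where the $\Ga$-invariance of $\La$ and the Lebesgue-number property of $\U$ pull their weight in tandem. The main ingredients are thus just the defining inequality of $(\la,U)$-expansion plus these two structural features of $\D$, and no appeal to the finer nested-neighborhood statement of Lemma~\ref{lem:nest} is needed.
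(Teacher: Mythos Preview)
Your proof is correct and follows essentially the same idea as the paper's: iteratively apply expanding generators $s_{\al(k)}^{-1}$ to grow the distance between the orbit points until it exceeds $\de$. The paper packages this iteration as a $\de$-code $(\al,p)$ for $x$ and then invokes Lemma~\ref{lem:nest} to conclude that the nested neighborhoods $\rho(c^\al_i)[B_\de(p_{i+1})]$ eventually exclude $y$; your sequence $g_k=s_{\al(k-1)}^{-1}\cdots s_{\al(0)}^{-1}$ is precisely $(c^\al_{k-1})^{-1}$ for a special $\de$-code, so the two arguments are the same computation viewed from opposite ends. Your version has the minor advantage of being self-contained (no appeal to Lemma~\ref{lem:nest}), while the paper's version illustrates how the code formalism absorbs such arguments once it is in place.
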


\begin{proof}
Let $x,y\in\La$ be distinct and consider a $\de$-code $(\al,p)$ for $x$. By Lemma~\ref{lem:nest}, the sequence of neighborhoods of $x$
\begin{align*}
\rho(c^\al_k)[B_\de(p_{k+1})]\quad (k\in\N_0) 
\end{align*}
is nested and exponentially shrinking. Thus there exists an $n\in\N_0$ such that
\begin{align*}
y&\notin\rho(c^\al_n)[B_\de(p_{n+1})], \textup{ that is, } \rho(c^\al_n)^{-1}(y) \notin B_\de(p_{n+1})=B_\de(\rho(c^\al_n)^{-1}(x)).
\end{align*}
Therefore, $d(\rho(c^\al_n)^{-1}(x),\rho(c^\al_n)^{-1}(y))\ge\de$ as desired.
\end{proof}

The following corollary will be crucial for Definition~\ref{def:pi}, which in turn plays an essential role when we discuss actions of hyperbolic groups in Section~\ref{sec:wordhyp}. The corollary is also the reason why we need the assumption that no point of $\La$ is isolated in $M$.

\begin{corollary}\label{cor:qg}
Let $\rho:\Ga\to\Homeo(M)$ be expanding at $\La$ with a datum $\D=(\vS,\de,L,\la)$. Let $\eta\in(0,\de]$ and $\al$ be an $\eta$-code for $x \in \La$. Then the ray $c^\al\in\Ray_x(\D,\eta)$ is a $(\frac{\log L}{\log \la},0)$-quasigeodesic ray in $(\Ga,d_\vS)$.
\end{corollary}

\begin{proof}
Let $(\al,p)$ be an $\eta$-code for $x\in\La$. Since no point of $\La$ is isolated in $M$, for each $k\in\N_0$ we can choose a point $y_k$ such that
\[
x\neq y_k\in \rho(c^\al_k)[B_{\eta}(p_{k+1})]\cap N_{\de/L^{k}}(\La).
\] 
Let $k,j\in\N_0$ be such that $0\le j<k$ and set $r_{kj}:=(c^\al_k)^{-1}c^\al_j$. By (the last statement of) Lemma~\ref{lem:nest}, the map $\rho(r_{kj})$ is $\la^{k-j}$-expanding on $\rho(c^\al_j)^{-1}\big[\rho(c^\al_k)[B_{\eta}(p_{k+1})]\big]\supset\{\rho(c^\al_j)^{-1}(y_k),\rho(c^\al_j)^{-1}(x)\}$, hence
\[
d(\rho(r_{kj})(y_{kj}),\rho(r_{kj})(x_{j}))
\ge\la^{k-j}d(y_{kj},x_{j}),
\]
where we set $y_{kj}:=\rho(c^\al_j)^{-1}(y_k)$ and $x_{j}:=\rho(c^\al_j)^{-1}(x)$. On the other hand, we have
\[
L^{|r_{kj}|_\vS}d(y_{kj},x_{j})\ge d(\rho(r_{kj})(y_{kj}),\rho(r_{kj})(x_{j}))
\]
from \eqref{eqn:lip}, since $x_{j}\in\La$ while $y_{kj}\in N_{\de/L^{k-j-1}}(\La)$ and $|r_{kj}|_\vS\le k-j$. From these two inequalities we obtain
\[
d_\vS(c^\al_k,c^\al_j)=|r_{kj}|_\vS\ge\frac{\log\la}{\log L}\cdot(k-j).
\]
Therefore, the $\eta$-ray $c^\al$ is a $(\frac{\log L}{\log \la},0)$-quasigeodesic ray.
\end{proof}

Another consequence of the encoding concerns the dynamics of the action of $\Ga$ on $\La$. The action of $\Ga$ on $\La$ need not be minimal in general even if $\Ga$ is a non-elementary hyperbolic group (see Example~\ref{ex:E1}). Nevertheless, the action of $\Ga$ on $\La$ has no \emph{wandering points}:

\begin{theorem}\label{thm:minimal}
Let $\rho:\Ga\to\Homeo(M)$ be expanding at $\La$ with a datum $\D=(\de)$. For $x\in\La$ and $\eta\in(0,\de]$, consider a ray $c^\al\in\Ray_x(\D,\eta)$ associated to an $\eta$-code $(\al,p)$ for $x$. Then
\begin{enumerate}[label=\textup{(\arabic*)},nosep,leftmargin=*]
\item
there exist a subsequence $(g_j)$ of $(c^\al_k)$ and a point $y\in\La$ such that $(\rho(g_j))$ converges to $x$ uniformly on $B_{\eta/2}(y)$;
\item
there exists an infinite sequence $(h_j)$ in $\Ga$ such that
\[
\lim_{j\to \infty} \rho(h_j)(x)=x.
\]
\end{enumerate}
\end{theorem}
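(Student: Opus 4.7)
The idea is to use Lemma~\ref{lem:nest} together with the compactness of $\La$. For part~(1), since $\La$ is compact and the associated sequence $(p_{i+1})_{i\in\N_0}\subset\La$ is infinite, I would extract a convergent subsequence $p_{i_j+1}\to q\in\La$. Setting $g_j:=c^\al_{i_j}$, once $j$ is large enough that $d(p_{i_j+1},q)<\eta/2$, the triangle inequality gives $B_{\eta/2}(q)\subset B_\eta(p_{i_j+1})$. Applying Lemma~\ref{lem:nest}(ii) then yields
\[
\rho(g_j)\bigl[B_{\eta/2}(q)\bigr]\subset\rho(c^\al_{i_j})\bigl[B_\eta(p_{i_j+1})\bigr]\subset B_{L\eta/\la^{i_j}}(x).
\]
Since $L\eta/\la^{i_j}\to 0$, this is precisely uniform convergence of $\rho(g_j)$ to the constant map $\equiv x$ on $B_{\eta/2}(q)$.

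For part~(2), I would exploit the identity \eqref{eqn:x}, which says $p_{i+1}=\rho(c^\al_i)^{-1}(x)$. Fix an index $j_0$ large enough so that $p_{i_{j_0}+1}\in B_{\eta/2}(q)$, and define
\[
h_j:=c^\al_{i_j}\cdot(c^\al_{i_{j_0}})^{-1}\in\Ga\quad\textup{for }j>j_0.
\]
Then $\rho(h_j)(x)=\rho(g_j)(p_{i_{j_0}+1})$, and by the uniform convergence established in (1), this tends to $x$ as $j\to\infty$.

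The one point requiring care is that $(h_j)$ should be a genuinely infinite sequence, i.e.\ have infinitely many distinct values---otherwise the statement is vacuous. This is where Corollary~\ref{cor:qg} enters: the ray $c^\al$ is an $(A,C)$-quasigeodesic in $(\Ga,d_\vS)$, so $d_\vS(c^\al_{i_j},c^\al_{i_{j_0}})\ge A^{-1}(i_j-i_{j_0})-C\to\infty$, forcing the elements $c^\al_{i_j}$ (and hence $h_j=c^\al_{i_j}(c^\al_{i_{j_0}})^{-1}$) to be pairwise distinct once $j$ is large. I expect this distinctness issue to be the only subtle point; the rest of the argument is a direct application of Lemma~\ref{lem:nest} plus a compactness extraction.
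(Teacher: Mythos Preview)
Your proposal is correct and follows essentially the same argument as the paper: extract a convergent subsequence $p_{i_j+1}\to q$ by compactness of $\La$, use $B_{\eta/2}(q)\subset B_\eta(p_{i_j+1})$ together with Lemma~\ref{lem:nest} for part~(1), and then set $h_j=c^\al_{i_j}(c^\al_{i_{j_0}})^{-1}$ and apply \eqref{eqn:x} for part~(2). You are in fact slightly more careful than the paper in explicitly verifying via Corollary~\ref{cor:qg} that the $h_j$ take infinitely many distinct values, a point the paper leaves implicit.
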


\begin{proof}
(1) By the compactness of $\La$, the sequence $p=(p_k)$ contains a subsequence $(p_{k_j+1})$ converging to some point $y\in\La$. The point $y$ is then covered by infinitely many balls $B_{\eta/2}(p_{k_j+1})$, $j\in\N$. By Lemma~\ref{lem:nest} the corresponding elements $g_j:= c^\al_{k_j}\in\Ga$ will send $B_{\eta/2}(y)\subset B_{\eta}(p_{k_j+1})$ to a subset $\rho(g_j)[B_{\eta/2}(y)]$ of diameter at most $L\eta/\la^{k_j}$ containing $x$. From this we conclude that the sequence $(\rho(g_j))$ converges to $x$ uniformly on $B_{\eta/2}(y)$. 

(2) On the other hand, since, in particular, $p_{k_1+1}\in B_{\eta/2}(y)$, we obtain
\[
\lim_{j\to\infty} \rho( c^\al_{k_j} ) (p_{k_1+1})= x. 
\]
Since $\rho(c^\al_{k_1})^{-1}(x)=p_{k_1+1}$, it follows that for $h_j:= c^\al_{k_j} (c^\al_{k_1})^{-1}$ we have
\[
\lim_{j\to\infty} \rho(h_j)(x)=x.\qedhere
\]
\end{proof}

\begin{remark}
The idea of \emph{Markov coding} of limit points of actions of finitely generated groups $\Ga$ by sequences in $\Ga$ is rather standard in \emph{symbolic dynamics} and goes back to Nielsen, Hedlund and Morse; we refer the reader to the paper by Series \cite{Series} for references and historical discussion. In the setting of hyperbolic groups this was introduced in Gromov's paper \cite{Gromov}*{\S.8} and discussed in more detail in the book by Coornaert and Papadopoulos \cite{CP}. Section 8.5.Y of Gromov's paper discusses a relation to Sullivan's stability theorem.
\end{remark}

\subsection{The meandering hyperbolicity condition}\label{sec:hyperbolicity}

We continue the discussion from the previous section. In order to define the meandering hyperbolicity condition we need to introduce an equivalence relation $\sim^N_{(\D,\eta)}$ on the set $\Ray_x(\D,\eta)$ of rays in $\Ga$ associated to $(\D,\eta)$-codes for $x\in \La$; recall Definitions~\ref{def:code} and \ref{def:ray}.

\begin{definition}[$(\D,\eta;N)$-equivalence]\label{def:N-equiv}
Suppose $\rho:\Ga\to\Homeo(M)$ is expanding at $\La$ with a datum $\D=(\vS,\de)$ and let $0<\eta\le\de$.
\begin{enumerate}[label=(\alph*),nosep,leftmargin=*]
\item
For each integer $N\ge0$ we define the relation $\approx^N_{(\D,\eta)}$ on $\Ray_x(\D,\eta)$ by declaring that $c^\al \approx^N_{(\D,\eta)} c^\be$ if there exist infinite subsets $P,\,Q\subset \N_0$ such that the subsets
\[
c^\al(P)=\{c^\al_k \mid k\in P\}\quad\textup{and}\quad c^\be(Q)=\{c^\be_j \mid j\in Q\}
\]
of $(\Ga,d_\vS)$ are within Hausdorff distance $N$ from each other. The  \emph{$(\D,\eta;N)$-equivalence}, denoted  $\sim^N_{(\D,\eta)}$, is the equivalence relation on the set $\Ray_x(\D,\eta)$ generated by the relation $\approx^N_{(\D,\eta)}$. In other words, we write $c^\al\sim^N_{(\D,\eta)} c^\be$ and say $c^\al$ and $c^\be$ are $(\D,\eta;N)$-equivalent if there is a finite chain of ``interpolating" rays $c^\al=c^{\ga_1}, c^{\ga_2},\ldots, c^{\ga_n}=c^\be$ in $\Ray_x(\D,\eta)$ such that
\[
c^{\ga_1} \approx^N_{(\D,\eta)} c^{\ga_2} \approx^N_{(\D,\eta)} \cdots \approx^N_{(\D,\eta)} c^{\ga_n}. 
\]
\item
The $(\D,\eta)$-rays $c^\al$ and $c^\be$ are said to \emph{$(\D,\eta;N)$-fellow-travel} if their images $c^\al(\N_0)$ and $c^\be(\N_0)$ are within Hausdorff distance $N$ from each other.
\end{enumerate}
\end{definition} 

\begin{remark}\phantomsection\label{rem:N-equiv}
\begin{enumerate}[label=(\alph*),nosep,leftmargin=*]
\item
Observe that $(\D,\eta;N)$-fellow-traveling rays are $(\D,\eta;N)$-equivalent.
\item
The notion of $(\D,\eta;N)$-equivalence is much more complex than that of $(\D,\eta;N)$-fellow-traveling for a reason: while fellow-traveling is a natural condition in the context of hyperbolic groups where quasigeodesics with common end-points uniformly fellow-travel, our notion of equivalence is defined this way in order to incorporate non-hyperbolic groups, such as higher rank uniform lattices, into the theory of hyperbolic group actions. In all examples that we have, two rays are  $(\D,\eta;N)$-equivalent if and only if they admit an interpolation by at most one extra ray. However, in general, we see no reason for interpolation via one ray to define an equivalence relation on coding rays. A picture describing the behavior of an interpolating ray can be found in Figure~\ref{fig:int-ray} in Section~\ref{sec:ulattice3}: the red interpolating ray ``meanders'' between the two black rays. This explains the terminology \emph{meandering hyperbolicity}.
\item
If $(\vS_0,\de_0)=\D_0\prec\D=(\vS,\de)$ and $\eta\le\de<\de_0$, 
then we have the inclusion $\Ray_x(\D_0,\eta)\subset\Ray_x(\D,\eta)$ as in Remark~\ref{rem:ray}(d) and can talk about the $(\D,\eta;N)$-equivalence of $(\D_0,\eta)$-rays.
\end{enumerate}
\end{remark}

\medskip
We are now ready to define the meandering hyperbolicity condition. First, we recall the notion of S-hyperbolicity which Sullivan defined in \cite{Sul}.

\begin{definition}[S-hyperbolicity]\label{def:shyp}
Let $\rho:\Ga\to\Homeo(M)$ be expanding at $\La$ with a datum $\D=(\I,\U,\vS,\allowbreak\de,L,\la)$.
\begin{enumerate}[label=(\alph*),nosep,leftmargin=*]
\item
The action $\rho$ is said to be \emph{S-hyperbolic (at $\La$)} if there exists an integer $N\ge1$ such that, for every $x\in\La$, all rays in $\Ray_x(\D,\de)$ are $(\D,\de;N)$-fellow-traveling.
\item
The action $\rho$ is said to be \emph{uniformly S-hyperbolic (at $\La$)} if there is an integer $N \ge1$ such that, for every $\eta\in(0,\de]$ and every $x\in\La$, all rays in $\Ray_x(\D,\eta)$ are $(\D,\eta;N)$-fellow-traveling. 
\end{enumerate}
We shall refer to the pair $(\D;N)$ as a \emph{(uniform) S-hyperbolicity datum} of $\rho$. 
\end{definition}

By weakening the $(\D,\eta;N)$-fellow-traveling condition in the definition of S-hyperbolicity to the $(\D,\eta;N)$-equivalence, as well as considering a refined expansion datum (Definition~\ref{def:refine}), we define the notion of \emph{meandering hyperbolicity} as follows. 

\begin{definition}[Meandering hyperbolicity]\label{def:mhyp}
Let $\rho:\Ga\to\Homeo(M)$ be expanding at $\La$ with the expansion datum $\D_0=(\I_0,\U_0,\vS_0,\allowbreak\de_0,L_0,\la_0)$. 
\begin{enumerate}[label=(\alph*),nosep,leftmargin=*]
\item
The action $\rho$ is said to be \emph{meandering-hyperbolic (at $\La$)} if there exist a refinement $\D=(\I,\U,\vS,\allowbreak\de,L,\la)\succ\D_0$ and an integer $N\ge1$ such that, for every $x\in\La$, all rays in  $\Ray_x(\D_0,\de_0)$ are $(\D,\de;N)$-equivalent (as rays in $\Ray_x(\D,\de)$). 
\item
The action $\rho$ is said to be \emph{uniformly meandering-hyperbolic (at $\La$)} if there are a refinement $\D=(\I,\U,\vS,\allowbreak\de,L,\la)\succ\D_0$ and an integer $N\ge1$ such that for every $\eta\in(0,\de]$ and every $x\in \La$, all rays in $\Ray_x(\D_0,\eta)$ are $(\D,\eta;N)$-equivalent (as rays in $\Ray_x(\D,\eta)$). 
\end{enumerate}
We shall refer to the triple  $(\D_0\prec\D;N)$ as a \emph{(uniform) meandering hyperbolicity datum} of $\rho$.
\end{definition}

\begin{remark}\phantomsection\label{rem:m-hyp}
\begin{enumerate}[label=(\alph*),nosep,leftmargin=*]
\item
Obviously, uniform meandering hyperbolicity implies meandering hyperbolicity.
\item
If an action is S-hyperbolic with a datum $(\D_0;N)$ then, for any trivial refinement $\D$ of $\D_0$ (see Definition~\ref{def:t-refine}), it is meandering-hyperbolic with the datum $(\D_0\prec\D;N)$, since $(\D_0,\eta;N)$-fellow-traveling rays are $(\D,\eta;N)$-equivalent. 
\item
A prime example of a meandering-hyperbolic action which is not S-hyperbolic is given by uniform lattices in higher rank semisimple Lie groups (see Section~\ref{sec:ulattice}). Every other examples of meandering-hyperbolic actions we present in this paper are actually S-hyperbolic (see Sections~\ref{sec:toy}, \ref{sec:wordhyp} and \ref{sec:examples}).
\item
We exhibit expanding actions which fail to be meandering-hyperbolic in Section~\ref{sec:e/=>h}.
\end{enumerate}
\end{remark}

\section{The structural stability theorem}\label{sec:proof}

In Section~\ref{sec:hyp->stability} we present the statement of our structural stability theorem for meandering-hyperbolic actions. Then we devote the rest of the section to its proof.

\subsection{Statement of the theorem: meandering hyperbolicity implies stability}\label{sec:hyp->stability}

Suppose $\rho:\Ga\to\Homeo(M)$ is expanding at $\La$. Following Sullivan's remark (at the very end of his paper \cite{Sul}) we would like to talk about small perturbations of $\rho$ which are still expanding. For this, we only need information on a neighborhood of the compact subset $\La$.

We equip $\Homeo(M)$ with what we call the \emph{compact-open Lipschitz topology}. Given a compact subset $K\subset M$, a positive number $\ep>0$ and $f\in\Homeo(M)$, let $U(f;K,\ep)$ denote the set of all $g\in\Homeo(M)$ that are \emph{$\ep$-close to $f$ on $K$} with respect to the metric $d_{\Lip,K}$ defined by
\begin{align}\label{eqn:dLip}
d_{\Lip,K}(f,g):=
\sup_{x\in K} d(f(x),g(x))+
\sup_{\substack{x,y\in K \\ x\neq y}} \left|\frac{d(f(x),f(y))}{d(x,y)}-\frac{d(g(x),g(y))}{d(x,y)}\right|
<\ep.
\end{align}
We define the compact-open Lipschitz topology on $\Homeo(M)$ as the topology generated by the collection of all such $U(f;K,\ep)$. This topology enables us to control the image as well as the Lipschitz constant on a neighborhood of the compact subset $\La$.

\begin{remark}\label{rem:diff}
As we mentioned in the introduction, Sullivan \cite{Sul}*{\S 9} actually considers the case when $M$ is a Riemannian manifold with the Riemannian distance function $d$ and the actions are by $C^1$-diffeomorphisms. In this case the compact-open Lipschitz topology on $\Diff^1(M)$ is weaker than the compact-open $C^1$-topology.
\end{remark}

Suppose $\rho:\Ga\to\Homeo(M)$ is expanding at $\La$ with a datum $\D=(\I,\vS)$  (Definition~\ref{def:expansion}). Given a compact subset $K\subset M$ and $\ep>0$, we say that a homomorphism $\rho':\Ga\to\Homeo(M)$ is a \emph{$(K,\ep)$-perturbation of $\rho$} if $\rho'(s_i)$ is $\ep$-close to $\rho(s_i)$ on $K$ for all $i\in\I$. The set of all $(K,\ep)$-perturbations of $\rho$ will be denoted by
\begin{align}\label{eqn:pert}
U(\rho;K,\ep).
\end{align}
Accordingly, we topologize $\mathrm{Hom}(\Ga,\Homeo(M))$ via the topology of ``algebraic convergence" by identifying it, via the map $\rho\mapsto (\rho(s_i))_{i\in\I}$, with a subset of $[\Homeo(M)]^\I$ equipped with the subspace topology. Then the subset $U(\rho;K,\ep)$ of $\mathrm{Hom}(\Ga,\Homeo(M))$ is an open neighborhood of $\rho$. Note that, when the ambient space $M$ itself is compact, we can set $K=M$ and simply talk about $\ep$-perturbations.

Now we are able to state our structural stability theorem for meandering-hyperbolic actions.

\begin{theorem}\label{thm:main}
If an action $\rho:\Ga\to\Homeo(M)$ is meandering-hyperbolic at a compact invariant subset $\La\subset M$ with a datum $(\D_0\prec\D;N)$, then the following hold.
\begin{enumerate}[label=\textup{(\arabic*)},leftmargin=*,parsep=0ex,itemsep=0.5ex]
\item
{
\setlength{\abovedisplayskip}{0.5ex}
\setlength{\belowdisplayskip}{0.5ex}
\setlength{\abovedisplayshortskip}{0ex}
\setlength{\belowdisplayshortskip}{0ex}
The action $\rho$ is structurally stable in the sense of Lipschitz dynamics. In other words, there exist a compact set $K\supset\La$ and a constant $\ep=\ep(\D,N)>0$ such that for every
\[
\rho'\in U(\rho;K,\ep)
\]
there exist a $\rho'$-invariant compact subset $\La'\subset M$ and an equivariant homeomorphism
\[
\phi:\La\to\La',
\]
that is, $\rho'(g)\circ\phi=\phi\circ\rho(g)$ on $\La$ for all $g\in\Ga$.
}
\item
The map $U(\rho;K,\ep)\to C^0(\La, M)$, $\rho'\mapsto\phi$ is continuous at $\rho$.
\item
Every action $\rho'\in U(\rho;K,\ep)$ is expanding at $\La'$.
\end{enumerate}
If an action $\rho:\Ga\to\Homeo(M)$ is uniformly meandering-hyperbolic then, in addition to the preceding statements, the following is true as well.
\begin{enumerate}[label=\textup{(\arabic*)},start=4,leftmargin=*,parsep=0pt,itemsep=0pt]
\item Every action $\rho'\in U(\rho;K,\ep)$ is again uniformly meandering-hyperbolic.
\end{enumerate}
\end{theorem}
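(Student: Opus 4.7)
The plan is to follow Sullivan's ``coding and pullback'' approach. Under $\rho$, each $x\in\La$ is the unique common point of the nested exponentially shrinking sequence $\rho(c^\al_i)[B_\eta(p_{i+1})]$ from Lemma~\ref{lem:nest}, for any $(\D_0,\de_0)$-code $\al$. For a perturbation $\rho'$ sufficiently close to $\rho$ I would build the analogous nested sequence using the \emph{same} sequence of generators $s_{\al(i)}\in\vS$, and define $\phi(x)$ to be its unique common point. Meandering hyperbolicity is exactly what ensures this definition does not depend on the choice of code, hence yields a well-defined map $\phi:\La\to M$.

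\textbf{Set-up and construction of $\phi_\al$.} First fix the refined datum $\D=(\I,\U,\vS,\de,L,\la)\succ\D_0$. Because $\La$ is compact and the finitely many maps $\rho(s_\al^{-1})$ satisfy strict expansion and Lipschitz bounds with open ``slack'' around each $U_\al$, one can find slightly worse constants $\la'\in(1,\la)$, $L'>L$ and a compact neighborhood $K\supset\La$ such that every $\rho'\in U(\rho;K,\ep)$, with $\ep=\ep(\D,N)$ small enough, inherits the analogous bounds on a slightly shrunken version of the same cover. This gives (3) immediately. For such $\rho'$ and a $(\D_0,\de_0)$-code $(\al,p)$ for $x\in\La$, I set $q^\al_0$ equal to $x$ and inductively define $q^\al_{i+1}=\rho'(s_{\al(i)}^{-1})(q^\al_i)$. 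Comparing $\rho'(s_{\al(i)}^{-1})$ to $\rho(s_{\al(i)}^{-1})$ step by step, and using that the contracting inverse absorbs accumulated error, one shows that $q^\al_i$ remains within a fixed small distance of $p_i$ for all $i$. Lemma~\ref{lem:nest} applied to $\rho'$ then renders $\rho'(c^\al_i)[B_\eta(q^\al_{i+1})]$ nested and exponentially shrinking, with a unique common point $\phi_\al(x)\in M$.

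\textbf{Well-definedness via meandering hyperbolicity (the main obstacle).} By the definition of $(\D,\de;N)$-equivalence it suffices to prove $\phi_\al(x)=\phi_\be(x)$ when $c^\al\approx^N_{(\D,\de)}c^\be$. Choose infinite subsequences $(c^\al_{i_k})$ and $(c^\be_{j_k})$ whose images in the Cayley graph $(\Ga,d_\vS)$ have Hausdorff distance $\le N$. Both $\phi_\al(x)$ and $\phi_\be(x)$ lie in the respective $\rho'$-images of small balls; pulling them back by $\rho'(c^\al_{i_k})^{-1}$ places them within a bounded distance of $q^\al_{i_k+1}$. Since $(c^\al_{i_k})^{-1}c^\be_{j_k}$ has word length $\le N$, the map $\rho'\bigl((c^\al_{i_k})^{-1}c^\be_{j_k}\bigr)$ is $L'^{N}$-Lipschitz on a uniform neighborhood of $\La$, yielding a uniform bound on $d\bigl(\rho'(c^\al_{i_k})^{-1}\phi_\al(x),\rho'(c^\al_{i_k})^{-1}\phi_\be(x)\bigr)$. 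Combined with the $(\la'^{i_k}/L')$-expansion of $\rho'(c^\al_{i_k})^{-1}$ from the analogue of Lemma~\ref{lem:nest}, this gives
\[
d(\phi_\al(x),\phi_\be(x))\le \frac{L'\cdot L'^{N}\eta}{\la'^{i_k}}\xrightarrow{k\to\infty} 0,
\]
so $\phi_\al(x)=\phi_\be(x)$. Hence $\phi(x):=\phi_\al(x)$ is well-defined. The delicate part here is that the chain of equivalences allowed by meandering hyperbolicity may pass through interpolating rays whose contributions must also be estimated; the key point is that the chain length is controlled by data intrinsic to $(\D_0\prec\D;N)$, so the bound above propagates through a uniformly bounded number of elementary steps without blowing up.

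\textbf{Remaining properties.} Equivariance $\phi\circ\rho(g)=\rho'(g)\circ\phi$ is immediate from a code shift: for $s\in\vS$ and a code $(\al,p)$ for $x$, the sequence obtained by prepending a step $s$ codes $\rho(s^{-1})(x)$, and the $\rho'$-pullback construction produces exactly $\rho'(s^{-1})(\phi(x))$; the case of arbitrary $g\in\Ga$ follows from the generating property of $\vS$. Continuity of $\phi$ follows because every $y\in\La$ close to $x$ admits codes that agree with $\al$ for many initial steps, so $\phi(y)$ lies in the same exponentially small nested ball around $\phi(x)$. Injectivity follows from expansivity (Corollary~\ref{cor:expansivity}) applied to the expanding action $\rho'$: if $\phi(x)=\phi(y)$ with $x\ne y$, running a code for $x$ eventually separates the two points under $\rho'$, a contradiction. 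Setting $\La':=\phi(\La)$ yields a compact $\rho'$-invariant set with $\phi:\La\to\La'$ an equivariant homeomorphism, proving (1); assertion (2) follows from the fact that every constant in the construction depends continuously on $\rho'$. For part (4), in the uniform setting, $\phi$ transports rays in $\Ray_x(\D_0,\eta)$ to rays for $\rho'$ associated to the same elements of $\Ga$, and the $(\D,\eta;N)$-equivalence relation is an intrinsic property of $(\Ga,d_\vS)$ and is therefore preserved, so $\rho'$ again carries a uniform meandering hyperbolicity datum of the same combinatorial type.
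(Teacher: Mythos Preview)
Your construction of $\phi_\al(x)$ is broken. You set $q^\al_0=x$ and $q^\al_{i+1}=\rho'(s_{\al(i)}^{-1})(q^\al_i)$, so $q^\al_{i+1}=\rho'(c^\al_i)^{-1}(x)$ and hence $x=\rho'(c^\al_i)(q^\al_{i+1})\in\rho'(c^\al_i)[B_\eta(q^\al_{i+1})]$ for every $i$. Your nested intersection therefore equals $\{x\}$, and your map $\phi$ is the identity; no information about $\rho'$ survives. The paper's construction (Lemma~\ref{lem:nest'}) is different in exactly this respect: one keeps the balls centered at the \emph{original} $\rho$-orbit points $p_{i+1}\in\La$ and applies the \emph{perturbed} maps to them, defining $\{\phi_\al(x)\}=\bigcap_i\rho'(c^\al_i)[B_\de(p_{i+1})]$. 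The work is then to show this sequence is still nested and shrinking, which comes from the inclusion $B_\eta(p_{i+1})\subset\rho'(s_{\al(i)}^{-1})[B_\eta(p_i)]$ obtained by comparing $\rho'$ to $\rho$ on $K=\ov{N}_\de(\La)$; the precise choice $\ep=\tfrac{\la-1}{2}\min\{\frac{\de}{(N+1)L^N},1\}$ is what makes both this and the word-length-$\le N$ comparison in the well-definedness step go through.

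Two smaller points. First, in your well-definedness paragraph you worry that ``the chain length is controlled by data intrinsic to $(\D_0\prec\D;N)$''. It is not: the equivalence $\sim^N_{(\D,\de)}$ is generated by $\approx^N_{(\D,\de)}$ with no bound on the number of interpolating rays. This does not matter, because one proves the \emph{equality} $\phi_\al(x)=\phi_\be(x)$ for each link $c^\al\approx^N c^\be$ separately, and equality is transitive. Second, injectivity is cleaner using expansivity of $\rho$, not $\rho'$: if $\phi(x)=\phi(y)$ then equivariance gives $\phi(\rho(g)x)=\phi(\rho(g)y)$, hence $d(\rho(g)x,\rho(g)y)<2\sup_z d(z,\phi(z))<\de$ for all $g$, contradicting Corollary~\ref{cor:expansivity}. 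Your sketch of (4) is also too optimistic: the $(\D',\eta)$-codes for $\rho'$ are defined relative to a new cover $\U'$ of $\La'$, and one has to manufacture $\D_0'\prec\D'$ and check that $(\D_0',\eta)$-codes for $\phi(x)$ pull back via $\phi$ to $(\D_0,\eta+\textup{const})$-codes for $x$, and conversely; the paper does this by shrinking the $U_\al$ by definite amounts $\de/5$ and $3\de/5$.
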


As an immediate consequence of Theorem~\ref{thm:main}(1), we have:

\begin{corollary}\label{cor:alg-stab}
A meandering-hyperbolic action $\rho:\Ga\to\Homeo(M)$ at a compact invariant subset $\La\subset M$ is algebraically stable in the following sense: for every $\rho'\in U(\rho;K,\ep)$, the kernel of the $\rho'$-action on $\La'$ equals the kernel of the $\rho$-action on $\La$.
\end{corollary}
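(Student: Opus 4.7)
The plan is to unpack the conclusion of Theorem~\ref{thm:main}(1) and observe that algebraic stability is a purely formal consequence of the existence of the equivariant homeomorphism $\phi:\La\to\La'$. The whole argument is a one-line diagram chase once one has the intertwining identity $\rho'(g)\circ\phi=\phi\circ\rho(g)$ on $\La$ for every $g\in\Ga$, so there is essentially no analytical content to add beyond what is already contained in the theorem.

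First I would fix $\rho'\in U(\rho;K,\ep)$ and invoke Theorem~\ref{thm:main}(1) to obtain the $\rho'$-invariant compact set $\La'\subset M$ together with the equivariant homeomorphism $\phi:\La\to\La'$. Although Theorem~\ref{thm:main}(1) is stated only in terms of generators in $\vS$, the intertwining relation extends to all of $\Ga$ because both $\rho$ and $\rho'$ are homomorphisms; hence $\rho'(g)\circ\phi=\phi\circ\rho(g)$ on $\La$ for every $g\in\Ga$.

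Next I would prove the two inclusions separately. For $\ker(\rho|_\La)\subset\ker(\rho'|_{\La'})$, suppose $\rho(g)(x)=x$ for every $x\in\La$; applying the intertwining identity to $x$ yields $\rho'(g)(\phi(x))=\phi(\rho(g)(x))=\phi(x)$ for every $x\in\La$, and since $\phi(\La)=\La'$ this shows $\rho'(g)$ fixes every point of $\La'$. For the reverse inclusion, suppose $\rho'(g)|_{\La'}=\mathrm{id}_{\La'}$. Then for any $x\in\La$ we have $\phi(\rho(g)(x))=\rho'(g)(\phi(x))=\phi(x)$, and the injectivity of $\phi$ forces $\rho(g)(x)=x$.

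The only potential subtlety, and hence the \emph{main (very mild) obstacle}, is ensuring that the equivariance given by Theorem~\ref{thm:main}(1) is really statement-level equivariance of group actions rather than mere conjugacy on a generating set; I would explicitly verify this by induction on word length in $\vS$ using the homomorphism property of $\rho'$, and then the corollary follows immediately as described above.
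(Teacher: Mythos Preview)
Your proposal is correct and is exactly the intended argument: the paper states the corollary as an immediate consequence of Theorem~\ref{thm:main}(1), and your diagram chase using the equivariance and bijectivity of $\phi$ is precisely how one unpacks that. Your ``main obstacle'' is in fact a non-issue, since Theorem~\ref{thm:main}(1) already asserts the intertwining relation $\rho'(g)\circ\phi=\phi\circ\rho(g)$ on $\La$ for \emph{all} $g\in\Ga$, not merely for generators.
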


\noindent We note, however, that faithfulness of the $\rho$-action on $M$ does not imply faithfulness of nearby actions: see Example~\ref{ex:H^4}.
 
In view of Remarks~\ref{rem:expansion}(c), \ref{rem:m-hyp}(b) and \ref{rem:diff}, we obtain Sullivan's structural stability theorem \cite{Sul}*{Theorem~II} in the $C^1$-setting as a corollary of Theorem~\ref{thm:main}(1):

\begin{corollary}
Consider a group action $\Ga\to\Diff^1(M)$ on a Riemannian manifold $M$ with a compact invariant subset $\La\subset M$. If the action is S-hyperbolic, then it is structurally stable in the sense of $C^1$-dynamics.
\end{corollary}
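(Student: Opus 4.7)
The plan is a two-step reduction to Theorem~\ref{thm:main}(1); essentially no new work is required.

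First, I would upgrade S-hyperbolicity to meandering hyperbolicity via Remark~\ref{rem:m-hyp}(b). Given an S-hyperbolicity datum $(\D_0;N)$ for the action, choose any trivial refinement $\D\succ\D_0$ in the sense of Definition~\ref{def:t-refine}, that is, replace $\de_0$ by some strictly smaller $\de>0$ while keeping $\I$, $\U$, $\vS$, $L$, $\la$ unchanged. By Definition~\ref{def:shyp}(a), any two rays in $\Ray_x(\D_0,\de_0)$ already $(\D_0,\de_0;N)$-fellow-travel, hence, by Remark~\ref{rem:N-equiv}(a), they are $(\D,\de;N)$-equivalent in $\Ray_x(\D,\de)$. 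Thus $(\D_0\prec\D;N)$ is a meandering-hyperbolicity datum for the action in the sense of Definition~\ref{def:mhyp}.

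Second, I would apply Theorem~\ref{thm:main}(1) to this datum to obtain a compact set $K\supset\La$ and a constant $\ep=\ep(\D,N)>0$ such that every $\rho'\in U(\rho;K,\ep)$ admits a $\rho'$-invariant compact subset $\La'\subset M$ together with an equivariant homeomorphism $\phi:\La\to\La'$. It remains only to match the ambient topologies. By the remark following \eqref{eqn:dLip}, when $M$ is a Riemannian manifold the compact-open Lipschitz topology on $\Diff^1(M)$ coincides with the compact-open $C^1$-topology; hence the intersection of $U(\rho;K,\ep)$ with $\Hom(\Ga,\Diff^1(M))$ is a $C^1$-open neighborhood of $\rho$. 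This is precisely structural stability in the sense of $C^1$-dynamics.

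The only mildly subtle point worth checking is that a $C^1$-action on a Riemannian manifold actually satisfies Definition~\ref{def:expansion}, in particular the $(\la,U_\al;\de)$-expanding clause and not merely a pointwise expansion of the differential. This is handled by Remark~\ref{rem:expansion}(c) together with the lemma that, on a geodesic metric space, every $(\la,U)$-expanding homeomorphism is automatically $(\la,U;\de)$-expanding for every $\de$; applied to compactness-chosen generators and an appropriate refinement. So I do not anticipate any substantive obstacle: the corollary is a direct packaging of the main theorem via Remark~\ref{rem:m-hyp}(b) and the identification of Lipschitz and $C^1$ topologies on $\Diff^1(M)$.
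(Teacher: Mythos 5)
Your argument is correct and is exactly the paper's proof: the corollary is deduced in one line from Remark~\ref{rem:m-hyp}(b) (an S-hyperbolicity datum $(\D_0;N)$ together with any trivial refinement $\D\succ\D_0$ is a meandering-hyperbolicity datum) followed by Theorem~\ref{thm:main}(1), with the identification of the compact-open Lipschitz topology on $\Diff^1(M)$ with the compact-open $C^1$-topology as noted in the remark after \eqref{eqn:dLip}. Your final ``subtle point'' is actually vacuous here, since the hypothesis of S-hyperbolicity (Definition~\ref{def:shyp}) already presupposes an expansion datum in the metric sense of Definition~\ref{def:expansion}, so no passage from derivative expansion to metric expansion is required.
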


\begin{remark}\label{rem:sul}
Sullivan's structural stability theorem stated above is not to be confused with the main theorem of \cite{Sul}, which is widely known in the theory of Kleinian groups. In order to guide the reader we summarize the contents of Sullivan's paper \cite{Sul} as follows.

Let $\Ga<\PSL(2,\C)$ be a finitely generated, non-solvable, non-rigid, non-relatively-compact and torsion-free group of conformal transformations of the Riemann sphere $\p^1(\C)$. Sullivan showed that the following are equivalent:
\begin{enumerate}[label=(\arabic*),parsep=0pt,itemsep=0pt]
\item the subgroup $\Ga<\PSL(2,\C)$ is convex-cocompact;
\item the $\Ga$-action on the limit set $\La\subset\p^1(\C)$ satisfies the expansion-hyperbolicity axioms;
\item this action is structurally stable in the sense of $C^1$-dynamics;
\item the subgroup $\Ga<\PSL(2,\C)$ is algebraically stable.
\end{enumerate}
Here $\Ga<\PSL(2,\C)$ is said to be \emph{algebraically stable} if all representations $\Ga\to\PSL(2,\C)$ sufficiently close to the identity embedding are injective.

The implication (4~$\Rightarrow$~1) is the main result (Theorem~A) of the paper \cite{Sul}, in the proof of which his quasiconformal stability theorem (Theorem~C) is obtained. The implication (1~$\Rightarrow$~2) is his Theorem~I, (2~$\Rightarrow$~3) is a special case of Theorem~II, and (3~$\Rightarrow$~4) is immediate. For groups with torsion the implication (4~$\Rightarrow$~1) is false (see Example~\ref{ex:VD}) but the other implications (1~$\Rightarrow$~2~$\Rightarrow$~3~$\Rightarrow$~4) still hold.
\end{remark}

\subsection{Specifying small perturbations \texorpdfstring{$\rho'$}{rho'}}\label{sec:specify}

We now proceed to prove Theorem~\ref{thm:main}. The assertion (1) will be proved in Sections~\ref{sec:specify}-\ref{sec:injective}, and the assertions (2)-(4) in Sections~\ref{sec:cont'}-\ref{sec:uhyp}, respectively.

\medskip
Let $\rho:\Ga\to\Homeo(M)$ be a meandering-hyperbolic action (Definition~\ref{def:mhyp}) at a compact invariant subset $\La\subset M$ with a datum $(\D_0\prec\D;N)$, where
\begin{align*}
(\I_0,\U_0,\vS_0,\de_0,L_0,\la_0)=\D_0\prec\D=(\I,\U,\vS,\de,L,\la),\\
\textup{that is, }
\I_0\subset\I,\; \U_0\prec\U,\; \vS_0\prec\vS,\; \de_0>\de,\; L_0\le L,\; \la_0\ge\la.
\end{align*} 
Recall, in particular, that $\rho$ is expanding with respect to the refined datum $\D$. 

For convenience, we summarize all the important constants for the proof as follows:
\vspace{2mm}

\noindent \framebox[16.6cm][c]{$K :=\ov{N}_{\de}(\La)$, \ $\kappa:= {\min\left\{\bd,\ 1\right\}}$, \ $\displaystyle \ep := \frac{\la-1}{2} \kappa$,  \ 
$L':=L+\ep$, \ $\la':=\la-\ep$, \ $L \geq \la> 1$ }
\vspace{2mm}

The compact set $K\supset\La$ and the constant $\ep=\ep(\D,N)>0$ specify the open set $U(\rho;K,\ep)$ of all $(K,\ep)$-perturbations $\rho'$ of $\rho$. 
Note that $K$ is compact since $(M,d)$ is assumed to be proper. Since $\de\le\de_\U$ is a Lebesgue number of $\U$, 
it follows that $\U$ is a cover of $N_\de(\La)$.

Now, we suppose
\[
\rho'\in U(\rho;K,\ep).
\]
Then, by the definitions \eqref{eqn:dLip} and \eqref{eqn:pert}, we have that for all $s\in\vS$,
\begin{align}\label{eqn:pert0}
d_{\Lip,K}(\rho(s),\rho'(s))<\ep.
\end{align}

We first observe that for each $i\in\I$,
\begin{align}\label{eqn:la'}
\rho'(s_i^{-1})\textup{ is }\la'\textup{-expanding on }U_i\cap K
\end{align}
Indeed, since $\rho(s_i^{-1})$ is $\la$-expanding on $U_i$, we see from \eqref{eqn:pert0} that for all distinct $x,y\in U_i\cap K$,
\begin{align*}
\frac{d(\rho'(s_i^{-1})(x),\rho'(s_i^{-1})(y))}{d(x,y)}
>\frac{d(\rho(s_i^{-1})(x),\rho(s_i^{-1})(y))}{d(x,y)}-\ep
\ge \la-\ep
=\la'.
\end{align*}

Moreover, we also note that
\begin{align}\label{eqn:L'}
\rho'(s_i)\textup{ is }L'\textup{-Lipschitz on }N_\de(\La)
\end{align}
for every $i\in\I$. To see this recall that the maps $\rho(s_i)$ are $L$-Lipschitz on $N_\de(\La)$ by Definition~\ref{def:expansion}(i), thus by \eqref{eqn:pert0} again
\begin{align*}
\frac{d(\rho'(s_i)(x),\rho'(s_i)(y))}{d(x,y)}
<\frac{d(\rho(s_i)(x),\rho(s_i)(y))}{d(x,y)}+\ep
\le L+\ep=L'
\end{align*}
for all distinct $x,y\in N_\de(\La)\subset K$. Note that $L'=L+\ep\ge\la+\ep>\la-\ep=\la'$.

For later use, we prove the following lemma.

\begin{lemma}\label{lem:dbw}
For every $w\in \Ga$ such that $|w|_\vS=n\le N$, we have
\[
d(\rho(w)(y),\rho'(w)(y))<nL^{n-1}\ep
\]
for all $y\in N_\kappa (\La)$.
\end{lemma}

\begin{proof}
We prove this by induction on $n\leq N$. If $|w|_\vS=1$, the claim is true by \eqref{eqn:pert0} since $N_\kappa(\La)\subset N_{\de}(\La)$. Suppose $w=st$ with $s\in\vS$, $|t|_\vS=n-1$ and $|w|_\vS=n$. Since $\rho(t)$ is $L^{n-1}$-Lipschitz on $N_{\de/L^{n-2}}(\La)\supset N_\kappa(\La)$ by \eqref{eqn:lip}, we have $\rho(t)(y)\in N_{L^{n-1}\kappa}(\La)\subset N_\de(\La)$. By the induction hypothesis $d(\rho(t)(y),\rho'(t)(y))<(n-1)L^{n-2}\ep$, we then have $\rho'(t)(y)\in N_{\de}(\La)$ as well, since $\ep=\frac{\la-1}{2}\kappa<L\kappa$ and thus
\[
L^{n-1}\kappa+(n-1)L^{n-2}\ep
<L^{n-1}\kappa+(n-1)L^{n-2}L\kappa
=nL^{n-1}\kappa<\de.
\]
As $\rho(s)$ is $L$-Lipschitz on $N_\de(\La)$, we obtain
\[
d\big(\rho(s)[\rho(t)(y)],\rho(s)[\rho'(t)(y)]\big) < L\cdot(n-1)L^{n-2}\ep.
\]
Furthermore, we have from \eqref{eqn:pert0}
\[
d\big(\rho(s)[\rho'(t)(y)],\rho'(s)[\rho'(t)(y)]\big)<\ep
\]
since $\rho'(t)(y)\in N_\de(\La)$. Thus
\begin{align*}
d(\rho(w)(y),\rho'(w)(y))
&\le d\big(\rho(s)[\rho(t)(y)],\rho(s)[\rho'(t)(y)]\big)
+d\big(\rho(s)[\rho'(t)(y)],\rho'(s)[\rho'(t)(y)]\big)\\
&< (n-1)L^{n-1}\ep + \ep\\
&< (n-1)L^{n-1}\ep + L^{n-1}\ep = nL^{n-1}\ep
\end{align*}
and the claim is proved.
\end{proof}

\subsection{Definition of \texorpdfstring{$\phi$}{phi}}

We first construct a map $\phi:\La\to M$.

Let $x\in\La$. In order to define $\phi(x)$, choose a  $(\D_0,\de_0)$-code $(\al,p)$ for $x$ as in Section~\ref{sec:encoding}. Since $\de<\de_0$, we may regard $\al$ as a $(\D,\de)$-code as in Remark~\ref{rem:code}(c)(d). Then, as we know from Lemma~\ref{lem:nest}, the point $x$ has an exponentially shrinking nested sequence of neighborhoods $\rho(c^\al_k)[B_\de(p_{k+1})]$ ($k\in\N_0$) such that 
\[
\{x\}=\bigcap_{k=0}^\infty\rho(c^\al_k)[B_\de(p_{k+1})].
\]

Now, consider a perturbation $\rho'\in U(\rho;K,\ep)$ of $\rho$ as specified in Section~\ref{sec:specify}. We claim that the sequence of perturbed subsets $\rho'(c^\al_k)[B_\de(p_{k})]$ ($k\in\N_0$) is also nested and exponentially shrinking. Since $M$ is complete, the intersection of this collection of subsets is a singleton in $M$ and we can \emph{define} $\phi_\al(x)$ by the formula
\begin{align*}
\{\phi_\al(x)\}
=\bigcap_{k=0}^\infty\rho'(c^\al_k)[B_\de(p_{k+1})]. 
\end{align*}

The claim will be proved in the following lemma, where, in fact, we shall find the values of $\eta\le\de$ for which the sequence $\rho'(c^\al_k)[B_\eta(p_{k})]$ is nested and exponentially shrinking. This lemma will later be used often, for example, when we show that $\phi$ is well-defined (Section~\ref{sec:well}) and is continuous (Section~\ref{sec:cont}).

\begin{lemma}\label{lem:nest'}
Let $\rho:\Ga\to\Homeo(M)$ be a meandering-hyperbolic action with the datum $(\D_0\prec\D;N)$. Consider a number $0<t\le\kappa=\min\{\bd,1\}$ (so that $\frac{\la-1}{2}t\le\ep$) and let
\[
\rho'\in U(\rho;K,\textstyle{\frac{\la-1}{2}t})\subset U(\rho;K,\ep).
\]
If $(\al,p)$ is a $(\D,\de)$-code for $x\in\La$, then for every $\eta\in[t,\de]$ the sequence of subsets
\[
\rho'(c^\al_k)[B_\eta(p_{k+1})]\quad(k\in\N_0)
\]
is nested and exponentially shrinking. Consequently, we have
\begin{align*}
\{\phi_\al(x)\}
=\bigcap_{k=0}^\infty\rho'(c^\al_k)[B_\eta(p_{k+1})]
=\bigcap_{k=0}^\infty\rho'(c^\al_k)[B_\de(p_{k+1})]
\end{align*}
and thus, for every $k\in\N_0$,
\[
\rho'(c^\al_k)^{-1}(\phi_\al(x))
\in B_{\eta}(p_{k+1}).
\]
If the $\de$-code $(\al,p)$ is special, then
\[
d(x,\phi_\al(x))<t.
\]
\end{lemma}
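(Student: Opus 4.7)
The plan is to mirror the proof of Lemma~\ref{lem:nest} for the unperturbed action, while carefully tracking the perturbation error. Set $\ep':=\frac{\la-1}{2}\vep\le\ep$, so that \eqref{eqn:pert0} holds with $\ep'$ in place of $\ep$; then by \eqref{eqn:la'} the map $\rho'(s_\al^{-1})$ is $\la'$-expanding on $U_\al\cap K$ for $\la':=\la-\ep'\ge(\la+1)/2>1$, and by \eqref{eqn:L'} each $\rho'(s_\al)$ is $(L+\ep')$-Lipschitz on $N_\de(\La)$.

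The crucial step will be the following one-step estimate: for each $i\in\N$ and each $\eta\in[\vep,\de]$,
\[
\rho'(s_{\al(i)})[B_\eta(p_{i+1})]\subset B_{(\eta+\ep')/\la'}(p_i)\subset B_\eta(p_i),
\]
and the image additionally has diameter at most $2\eta/\la'$. To establish this I would first verify the auxiliary containment $\rho'(s_{\al(i)})[B_\de(p_{i+1})]\subset B_\de(p_i)$, using the unperturbed $(\la,U_{\al(i)};\de)$-expansion applied at radius $\de/\la$ to get $\rho(s_{\al(i)})[B_\de(p_{i+1})]\subset B_{\de/\la}(p_i)$, combined with $\ep'$-closeness on $K\supset B_\de(p_{i+1})$, so that $\rho'(s_{\al(i)})[B_\de(p_{i+1})]\subset B_{\de/\la+\ep'}(p_i)\subset B_\de(p_i)\subset U_{\al(i)}\cap K$. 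The last inclusion is where the bound $\ep\le\frac{\la-1}{2}\bd$ is used crucially: since $\la\le L$ and $N\ge 1$ force $\la<2(N+1)L^N$, one has $\de/\la+\ep'<\de$. Once the image is located inside $U_{\al(i)}\cap K$, I may invoke the $\la'$-expansion of $\rho'(s_{\al(i)}^{-1})$: setting $z_i:=\rho'(s_{\al(i)}^{-1})(p_i)$, which satisfies $d(z_i,p_{i+1})\le\ep'$, one gets for any $u\in B_\eta(p_{i+1})$
\[
\la'\,d(\rho'(s_{\al(i)})(u),p_i)\le d(u,z_i)\le d(u,p_{i+1})+d(p_{i+1},z_i)<\eta+\ep'.
\]
This yields the centering estimate, and the same contraction applied to pairs of points in $B_\eta(p_{i+1})$ gives the diameter bound; the second inclusion above reduces to $\ep'\le(\la'-1)\eta$, which follows from $\la'-1\ge(\la-1)/2$ and $\vep\le\eta$.

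Iterating the one-step estimate for $i\ge 1$ yields a nested sequence of images with $\rho'(s_{\al(1)}\cdots s_{\al(i)})[B_\eta(p_{i+1})]\subset B_\eta(p_1)\subset N_\de(\La)$ of diameter at most $2\eta/(\la')^i$; applying the final $(L+\ep')$-Lipschitz map $\rho'(s_{\al(0)})$ then shows that $\rho'(c^\al_i)[B_\eta(p_{i+1})]$ is nested and exponentially shrinking with diameter bounded by $2(L+\ep')\eta/(\la')^i\to 0$. Properness of $M$ together with Cantor's intersection theorem then deliver a unique limit point $\phi_\al(x)$. Independence from $\eta\in[\vep,\de]$ is immediate: for $\eta'\le\eta$ the $\eta'$-chain sits inside the $\eta$-chain and both intersections are singletons, forcing equality, and the membership $\rho'(c^\al_i)^{-1}(\phi_\al(x))\in B_\eta(p_{i+1})$ follows from the strict inclusion $B_{(\eta+\ep')/\la'}(p_i)\subset B_\eta(p_i)$. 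Finally, when $(\al,p)$ is special the hypothesis $B_\de(p_0)\subset U_{\al(0)}$ lets the one-step estimate run also at $i=0$ with $\eta=\vep$, yielding $\rho'(s_{\al(0)})[B_\vep(p_1)]\subset B_{(\vep+\ep')/\la'}(p_0)$; since $\phi_\al(x)$ lies in this open ball, $d(\phi_\al(x),x)<(\vep+\ep')/\la'\le\vep$, as required. The main obstacle is the auxiliary containment that locates the perturbed image inside $U_{\al(i)}\cap K$: without it the $\la'$-expansion cannot be invoked, and it is precisely this containment that forces the tight choice of $\ep$ involving both $N$ and $L^N$.
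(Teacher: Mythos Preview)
Your argument is correct and tracks the paper's proof closely: both establish a one-step inclusion $\rho'(s_{\al(i)})[B_\eta(p_{i+1})]\subset B_\eta(p_i)$ for $i\ge 1$, iterate, and then apply the $(L+\ep')$-Lipschitz bound to the last factor $\rho'(s_{\al(0)})$. The difference lies in how the one-step inclusion is justified. The paper asserts the equivalent containment \eqref{eqn:expand'} directly ``from \eqref{eqn:perturb} and \eqref{eqn:inclusions}''; you instead insert an auxiliary step at radius $\de$---first showing $\rho'(s_{\al(i)})[B_\de(p_{i+1})]\subset B_{\de/\la+\ep'}(p_i)\subset U_{\al(i)}\cap K$ via the unperturbed contraction plus $C^0$-closeness---so that the $\la'$-expansion of $\rho'(s_{\al(i)}^{-1})$ on $U_{\al(i)}\cap K$ can legitimately be applied to the pair $(\rho'(s_{\al(i)})(u),p_i)$. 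This extra care actually buys something: the naive route (bounding $d(\rho'(s_{\al(i)})(z),p_i)\le\eta/\la+\ep'$ and asking this to be at most $\eta$) would require $\vep\le 2\eta/\la$, which fails at $\eta=\vep$ once $\la>2$; your two-stage argument works uniformly in $\la>1$ and $\eta\in[\vep,\de]$.

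One small correction to your commentary: the auxiliary containment only needs $\ep'<\de(\la-1)/\la$, which holds under far weaker hypotheses than $\vep\le\bd$. The full $N$- and $L^N$-dependence in the definition of $\ep$ is not ``forced'' by this lemma; it enters later, in Section~\ref{sec:well}, where one must control $\rho'(r)$ for words $r$ of $\vS$-length up to $N$ in order to prove that $\phi$ is well-defined.
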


\begin{proof}
Since $t\le\eta$, we see from the assumption $\rho'\in U(\rho;K,\frac{\la-1}{2}t)$ that
\begin{align}\label{eqn:perturb}
\sup_{x\in K} d(\rho(s_i^{-1})(x),\rho'(s_i^{-1})(x))
\le d_{\Lip,K}(\rho(s_i^{-1}),\rho'(s_i^{-1}))
< \frac{\la-1}{2}t \le \frac{1}{2}(\la\eta-\eta)
\end{align}
for all $i\in\I$. Since $\eta\le\de$, we have by Definition~\ref{def:expansion}(ii) the inclusion \eqref{eqn:expand} for $\rho$
\begin{align}\label{eqn:inclusions}
B_{\eta}(p_{k+1})
\subset B_{\la\eta}(p_{k+1})
\subset \rho(s_{\al(k)}^{-1})[B_\eta(p_k)]
\subset \rho(s_{\al(k)}^{-1})[U_{\al(k)}]
\end{align}
for all $k\in\N$, while $B_\eta(p_k)\subset B_\de(p_k)\subset U_{\al(k)}\cap K$ as $\al$ is a $\de$-code. 

In particular, we see from \eqref{eqn:inclusions} that the boundary of $\rho(s_{\al(k)}^{-1})[B_\eta(p_k)]$ lies outside $B_{\la\eta}(p_{k+1})$, and from \eqref{eqn:perturb} that the boundary of $\rho'(s_{\al(k)}^{-1})[B_\eta(p_k)]$ lies outside the ball centered at $p_{k+1}$ of radius $\la\eta-\frac{1}{2}(\la\eta-\eta)=\frac{1}{2}\eta(\la+1)>\eta$. Moreover, $\rho'(s_{\al(k)}^{-1})(p_k)$ lies in the ball centered at $p_{k+1}$ of radius $\frac{1}{2}\eta(\la-1)$, which implies $\rho'(s_{\al(k)}^{-1})[B_\eta(p_k)]\cap B_{\frac{1}{2}\eta(\la+1)}(p_{k+1})\neq\emptyset$. Note that every open ball is path-connected in a geodesic metric space $M$. Thus we deduce that $\rho'(s_{\al(k)}^{-1})[B_\eta(p_k)]$ is a connected component of $M\setminus \partial \rho'(s_{\al(k)}^{-1})[B_\eta(p_k)]$ and hence $B_{\frac{1}{2}\eta(\la+1)}(p_{k+1})\subset \rho'(s_{\al(k)}^{-1})[B_\eta(p_k)]$. Therefore, we conclude that
\begin{align}\label{eqn:expand'}
B_{\eta}(p_{k+1})
\subset B_{\frac{1}{2}\eta(\la+1)}(p_{k+1})
\subset \rho'(s_{\al(k)}^{-1})[B_\eta(p_k)]
\subset \rho'(s_{\al(k)}^{-1})[U_{\al(k)}\cap K]. 
\end{align}
Thus
\begin{align}\label{eqn:contract'}
 \rho'(s_{\al(k)})[B_\eta(p_{k+1})]\subset B_\eta(p_k) 
\end{align}
for all $k\in\N$, and we check as in \eqref{eqn:nesting} the nesting property
\[
\rho'(c^\al_k)[B_\eta(p_{k+1})]\subset\rho'(c^\al_{k-1})[B_\eta(p_k)]
\]
for all $k\in\N$.

Furthermore, the diameter of $ \rho'(c^\al_k)[B_\eta(p_{k+1})]
=\rho'(s_{\al(0)}s_{\al(1)}\cdots s_{\al(k)})[B_\eta(p_{k+1})]$ is at most $2\eta(L+\ep)/(\la')^k$, because we have \eqref{eqn:expand'} and each $\rho'(s_{\al(j)})$ $(1\le j\le k)$ is $(1/\la')$-contracting on $\rho'(s_{\al(j)}^{-1})[U_{\al(j)}\cap K]$ by \eqref{eqn:la'}, and the last map $\rho'(s_{\al(0)})$ is $(L+\ep)$-Lipschitz on $B_\eta(p_1)\subset N_\de(\La)$ by \eqref{eqn:L'}. Hence the exponentially shrinking property also holds.

If the $\de$-code $(\al,p)$ is special, then the inclusion \eqref{eqn:expand'} as well as \eqref{eqn:contract'} hold for $k=0$. Thus
\[
\phi_\al(x)
\in\rho'(c^\al_0)[B_\eta(p_{1})]
=\rho'(s_{\al(0)})[B_\eta(p_{1})]
\subset B_\eta(x)
\]
for all $\eta\in[t,\de]$. It follows that $d(x,\phi_\al(x))<t$.
\end{proof}

\subsection{\texorpdfstring{$\phi$}{phi} is well-defined}\label{sec:well}

To show that $\phi$ is well-defined, we need to show that $\phi_\al(x)=\phi_\be(x)$ for any two $(\D_0,\de_0)$-codes $(\al,p)$ and $(\be,q)$. Since $\rho$ is meandering-hyperbolic with the datum $(\D_0\prec\D;N)$, the corresponding $(\D_0,\de_0)$-rays $c^\al$ and $c^\be$ are $(\D,\de;N)$-equivalent, that is, $c^\al\sim^N_{(\D,\de)} c^\be$ in $\Ray_x(\D,\de)$. By definition, the equivalence relation $\sim^N_{(\D,\de)}$ is generated by the relation $\approx^N_{(\D,\de)}$ (see Definition~\ref{def:N-equiv}(a)). Thus, it suffices to show the equality $\phi_\al(x)=\phi_\be(x)$ when  $c^\al$ and $c^\be$ are $(\D,\de)$-rays and satisfy $c^\al\approx^N_{(\D,\de)} c^\be$, that is, there exist infinite subsets $P,\,Q\subset \N_0$ such that the subsets $c^\al(P)$ and $c^\be(Q)$ are within Hausdorff distance $N$ from each other in $(\Ga,d_\vS)$.

Suppose to the contrary that
\[
\bigcap_{k\in P} \rho'(c^\al_k)[B_\de(p_{k+1})] = \{\phi_\al(x)\}\neq\{\phi_\be(x)\}=\bigcap_{j\in Q} \rho'(c^\be_j)[B_\de(q_{j+1})].
\]
Since the open sets $\rho'(c^\be_j)[B_\de(q_{j+1})]$ shrink to $\phi_\be(x)$, there exists an integer $n\in Q$ such that $\phi_\al(x)\notin\rho'(c^\be_n)[B_\de(q_{n+1})]$, that is,
\begin{align}\label{eqn:well}
\rho'(c^\be_{n})^{-1}(\phi_\al(x))\notin B_\de(q_{n+1}).
\end{align}
Since the Hausdorff distance between $\{c^\al_k\}_{k\in P}$ and $\{c^\be_j\}_{j\in Q}$ is at most $N$, there is an integer $m\in P$ such that $d_\vS(c^\be_n,c^\al_m)\le N$. Set
\[
r=(c^\be_{n})^{-1}c^\al_{m},
\]
so that $|r|_\vS\le N$. Note from Remark~\ref{rem:ray}(b) that $\rho(r)$ maps $p_{m+1}$ to $q_{n+1}$:
\[
\rho(r)(p_{m+1})=\rho(r)[\rho(c^\al_{m})^{-1}(x)]=\rho(c^\be_{n})^{-1}(x)=q_{n+1}.
\] 
See Figure~\ref{fig:02}.

\begin{figure}[ht]
\labellist
\pinlabel {$\phi_\al(x)$} at 26 148
\pinlabel {$\phi_\be(x)$} at 30 66
\pinlabel {$p_{m+1}$} at 261 160
\pinlabel {$B_\kappa(p_{m+1})$} at 290 180
\pinlabel {$q_{n+1}$} at 261 44
\pinlabel {$B_\de(q_{n+1})$} at 304 20
\pinlabel {$\rho(r)=\rho((c^\be_{n})^{-1}c^\al_{m})$} at 292 110
\pinlabel {$\rho'(c^\al_{m})^{-1}$} at 140 190
\pinlabel {$\rho'(c^\be_{n})^{-1}$} at 140 14
\pinlabel {$\rho'(c^\be_{n})^{-1}$} at 110 74
\endlabellist
\centering
\includegraphics[width=0.7\textwidth]{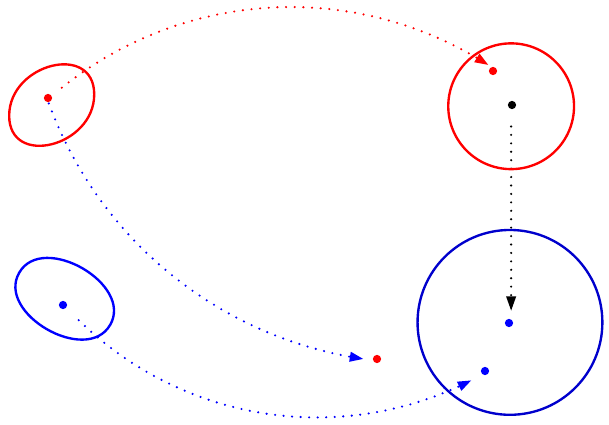}
\caption{The points $\phi_\al(x)$ and $\phi_\be(x)$.}
\label{fig:02}
\end{figure}

In view of Lemma~\ref{lem:nest'} (with $t=\eta=\kappa=\min\{\bd,1\}$), we may assume that we used $\kappa$-balls in the definition of $\phi_\al(x)$, so that
\[
\rho'(c^\al_{m})^{-1}(\phi_\al(x))\in B_{\kappa}(p_{m+1}).
\]
Now, if we show that $\rho'(r)$ maps $B_{\kappa}(p_{m+1})$ into $B_\de(q_{n+1})$ and hence
\begin{align*}
\rho'(c^\be_{n})^{-1}(\phi_\al(x))
&=\rho'(r)[\rho'(c^\al_{m})^{-1}(\phi_\al(x))]\\
&\in\rho'(r)[B_{\kappa}(p_{m+1})]\subset B_\de(q_{n+1}),
\end{align*}
then we are done, since we are in contradiction with \eqref{eqn:well}.

To show $\rho'(r)[B_{\kappa}(p_{m+1})]\subset B_\de(q_{n+1})$, let $l=|r|_\vS\le N$. If $y\in B_{\kappa}(p_{m+1})\subset N_\kappa(\La)$ then
\begin{align*}
d(\rho'(r)(y),q_{n+1})
&= d(\rho'(r)(y),\rho(r)(p_{m+1}))\\
&\le d(\rho'(r)(y),\rho(r)(y)) +d(\rho(r)(y),\rho(r)(p_{m+1}))\\
&< k L^{l-1}\ep +L^l\kappa\\
&< N L^{N-1}L\kappa +L^N\kappa\le \de,
\end{align*}
where the second inequality holds by Lemma~\ref{lem:dbw} and since $\rho(r)$ is $L^l$-Lipschitz on $N_{\de/L^{l-1}}(\La)\supset B_{\kappa}(p_{m+1})$ by \eqref{eqn:lip}, and the third inequality holds since $\ep=\frac{\la-1}{2}\kappa<L\kappa$.

This completes the proof of the equality $\phi_\al(x)=\phi_\be(x)$.

\medskip
From now on we may write $\phi(x)$ for $x\in\La$ without ambiguity. An immediate consequence of this is that we are henceforth free to choose a \emph{special} $\de$-code for $x$. Without loss of generality, we may assume that $N\geq 4$. Then from Lemma~\ref{lem:nest'} (with $t=\kappa=\min\{\bd,1\}$) we conclude that
\begin{align}\label{eqn:bd}
d(x,\phi(x))<\kappa\le\bd<\frac{\de}{5}
\end{align}
for all $x\in\La$.

\subsection{\texorpdfstring{$\phi$}{phi} is equivariant} \label{sec:equiv}

To show the equivariance of $\phi$, it suffices to check it on the generating set $\vS$ of $\Ga$.

Given $x\in\La$ and $s\in\vS$, set $y=\rho(s^{-1})(x)$. Let $(\be,q)$ be a \emph{special} $\de$-code for $y$, so that $B_\de(y)\subset U_{\be(0)}$ (see Definition~\ref{def:code}). Then we consider a $\de$-code $(\al,p)$ for $x$ defined by $s_{\al(0)}=s$ and
\begin{align*}
{\al(k)}&={\be(k-1)},\\
p_k&=q_{k-1}
\end{align*}
for $k\in\N$: indeed, we verify the requirement that
\[
B_\de(p_{k})=B_\de(q_{k-1}) \subset U_{\be(k-1)}=U_{\al(k)}
\]
for $k\in\N$. The associated rays $\{c^\al_k\}_{k\in\N_0}$ and $\{c^\be_k\}_{k\in\N_0}$ (see Definition~\ref{def:ray}) are related by
\[
c^\al_k
=s_{\al(0)}(s_{\al(1)}\cdots s_{\al(k)})
=s(s_{\be(0)}\cdots s_{\be(k-1)})
=s\,c^\be_{k-1}
\]
for $k\in\N$. Therefore, we have
\begin{align*}
\{\phi[\rho(s)(y)]\}=\{\phi(x)\}
&=\bigcap_{k=0}^\infty \rho'(c^\al_k)[B_\de(p_{k+1})] \\
&=\bigcap_{k=1}^\infty \rho'(c^\al_k)[B_\de(p_{k+1})] \\
&=\bigcap_{k=1}^\infty \rho'(s)\rho'(c^\be_{k-1})[B_\de(q_k)] \\
&=\rho'(s)\left[\bigcap_{k=0}^\infty \rho'(c^\be_{k})[B_\de(q_{k+1})] \right]
=\rho'(s)\{\phi(y)\},
\end{align*}
which implies the equivariance of $\phi$.

\subsection{\texorpdfstring{$\phi$}{phi} is continuous}\label{sec:cont}

Let $\zeta>0$ be given. In order to show that $\phi$ is continuous at $x\in\La$, assign a $(\D,\de)$-code $(\al,p)$ for $x$ which comes from a $(\D,\de_0)$-code for $x$. (Recall $\de_0>\de$ and Remark~\ref{rem:code}(c). Also note that Lemma~\ref{lem:nest'} applies to the $(\D,\de)$-code $(\al,p)$ with $\eta=\de$). Choose an integer $j\in\N_0$ such that $2\de_0(L+\ep)/(\la')^{j}<\zeta$, where the constants $\la'$ and $\ep$ are from Section \ref{sec:specify}. Since $\rho(c^\al_j)^{-1}$ maps $x$ to $p_{j+1}$ and is continuous, there exists $\zeta'>0$ such that $\rho(c^\al_j)^{-1}[B_{\zeta'}(x)]\subset B_{\de_0-\de}(p_{j+1})$. Below we will show that if $y\in\La$ satisfies $d(x,y)<\zeta'$ then $d(\phi(x),\phi(y))<\zeta$ thereby proving that $\phi$ is continuous at $x$.

Let $y\in\La$ be such that $d(x,y)<\zeta'$. Then $\rho(c^\al_j)^{-1}(y)\in B_{\de_0-\de}(p_{j+1})$, hence
\begin{align*}
y\in
\rho(c^\al_j)[B_{\de_0-\de}(p_{j+1})]\subset
\rho(c^\al_{j-1})[B_{\de_1-\de}(p_j)]\subset\cdots\subset
\rho(c^\al_0)[B_{\de_0-\de}(p_1)]
\end{align*}
by Lemma~\ref{lem:nest}(i). In other words, for $0\leq k\leq j$, we have
\begin{align*}
\rho(c^\al_k)^{-1}(y) &\in B_{\de_0-\de}(p_{k+1}), \\
\textup{hence}\quad B_\de(\rho(c^\al_k)^{-1}(y)) &\subset B_{\de_0}(p_{k+1}) \subset U_{\al(k+1)}.
\end{align*}
This means that there is a $\de$-code $(\be,q)$ for $y\in\La$ with a property that
\[
\be(k)=\al(k)\quad\textup{for }0\le k\le j.
\]
In particular, $c^\be_j=c^\al_j$ and hence $q_{j+1}=\rho(c^\be_j)^{-1}(y)=\rho(c^\al_j)^{-1}(y)\in B_{\de_0-\de}(p_{j+1})$. Consequently, we have $B_{\de}(q_{j+1})\subset B_{\de_0}(p_{j+1})$ and
\begin{align*}
\{\phi(y)\} 
&=\bigcap_{k=0}^\infty \rho'(c^\be_k)[B_{\de}(q_{k+1})]\\
&\subset \rho'(c^\be_j)[B_{\de}(q_{j+1})]\\
&=\rho'(c^\al_j)[B_{\de}(q_{j+1})]\\
&\subset\rho'(c^\al_j)[B_{\de_0}(p_{j+1})],
\end{align*}
By (the proof of) Lemma~\ref{lem:nest'}, the diameter of the last set $\rho'(c^\al_j)[B_{\de_0}(p_{j+1})]$ is at most $2{\de_0}(L+\ep)/(\la')^{j}<\zeta$. Since $\phi(x)\in\rho'(c^\al_j)[B_{\de_0}(p_{j+1})]$ as well, we showed
\[
d(\phi(x),\phi(y))\le2{\de_0}(L+\ep)/(\la')^{j}<\zeta
\]
as desired.

\subsection{\texorpdfstring{$\phi$}{phi} is injective}\label{sec:injective}

Suppose, to the contrary, that $\phi(x)=\phi(y)$ but $x\neq y$. Since $\phi$ is equivariant, we then have $\phi[\rho(g)(x)]=\phi[\rho(g)(y)]$ for any $g\in\Ga$, hence, by \eqref{eqn:bd},
\[
d(\rho(g)(x),\rho(g)(y))<d(\rho(g)(x),\phi[\rho(g)(x)])+d(\phi[\rho(g)(y)],\rho(g)(y))<\de/5+\de/5=2\de/5
\]
for all $g\in\Ga$. But this contradicts Corollary~\ref{cor:expansivity}, since $\rho$ has an expansivity constant $2\de/5$. Therefore, $\phi$ is injective.

\medskip
So far, we have proved the claim that $\phi$ is an equivariant homeomorphism. This completes the proof of Theorem~\ref{thm:main}(1).

\subsection{\texorpdfstring{$\phi$}{phi} depends continuously on \texorpdfstring{$\rho'$}{rho'}}\label{sec:cont'} 

We show that the map
\begin{align*}
\mathrm{Hom}(\Ga,\Homeo(M))
\supset U(\rho;K,\ep)&\to C^0(\La,(M,d))\\
\rho'&\mapsto \phi 
\end{align*}
is continuous at $\rho$, where we equip $\mathrm{Hom}(\Ga, \Homeo(M))$ with the topology of ``algebraic convergence" as in Section~\ref{sec:hyp->stability} and $C^0(\La,(M,d))$ with the uniform topology. This will imply that if $\rho'$ is close to $\rho$ then $\phi$ is close to the identity map and, as for $\La$ and $\La'=\phi(\La)$, that the map $\rho'\mapsto d_{\mathrm{Haus}}(\La,\La')$ is continuous at $\rho$, where $d_{\mathrm{Haus}}$ stands for the Hausdorff distance.

To prove the claim, suppose a sufficiently small constant $t>0$ is given. Of course, we may assume $t\le\min\{\bd,1\}$. Then we have to find a neighborhood $U(\rho;K',\ep')$ of $\rho$ such that $\sup\{d(x,\phi(x))\mid x\in\La\}<t$ for every $\rho'\in U(\rho;K',\ep')$. So, we let
\[
K'=K\quad\textup{and}\quad\ep'=\frac{\la-1}{2}t.
\]
Suppose $\rho'\in U(\rho;K',\ep')$. Let $x\in\La$ and choose a \emph{special} $\de$-code $(\al,p)$ for $x$. Then by Lemma~\ref{lem:nest'} we have $d(x,\phi(x))=d(x,\phi_\al(x))<t$. Since $x\in\La$ is arbitrary, the proof is complete.

\subsection{\texorpdfstring{$\rho'$}{rho'} is expanding}\label{sec:expand}

We now show how, given an expansion datum $\D$ of the action $\rho$, to get a new expansion datum $\D'=(\I',\U',\vS',\allowbreak\de',L',\la')$ of the action $\rho'$.

First take the same data $\I$ and $\vS$ of $\D$ i.e., set $\I'=\I$ and $\vS'=\vS$, and set $L'=L+\ep$ and $\la'=\la-\ep$ from \eqref{eqn:L'} and \eqref{eqn:la'}. Then every $\rho'(s_i^{-1})$ is $\la'$-expanding on $U_i\cap N_\de(\La)$ and  $L'$-Lipschitz on $N_\de(\La)$.
For a positive number $r<4\de/5$, if we set
\[
\U'=\{U_i':=\intr (U_i\cap N_{\de}(\La) )^r=\intr U_i^{r}\cap N_{\de-r}(\La) \mid i\in \I\}
\]
(recall \eqref{eqn:ur} for the definition of $U^{r}$), then by Remark~\ref{rem:expansion}(e) there is a constant $\de'=\de'(r)>0$ such that every $\rho'(s_i^{-1})$ is $(\la',U_\al';\de')$-expanding. Note that $\de'$ can be chosen as small as we want and thus we may assume that $\de'<4\de/5$.
Let $x\in\La$. Then, since $d(x,\phi(x))<\de/5$ by \eqref{eqn:bd} and $0<r<4\de/5$, it follows that
\[
\ov{B}_{r}(\phi(x))\subset B_{r+\de/5}(x) \subset B_{\de}(x) \subset U_i\cap N_\de(\La)
\]
for some $i\in\I$, and hence $\phi(x)\in U'_i$. This implies that $\U'$ covers $\La'$ and $N_{\de'}(\La')\subset N_\de(\La)$. 
Thus every $\rho'(s_i^{-1})$ is $L'$-Lipschitz on $N_{\de'}(\La')$. If we choose $\de'$ so that $\de'$ is smaller than a Lebesgue number of the open covering $\U'$ of $\La'$, 
the properties (i) and (ii) of Definition~\ref{def:expansion} are verified for $\rho'$ with the datum $\D'=(\I',\U',\vS',\allowbreak\de',L',\la')$. Therefore, $\rho'$ is expanding.

\subsection{\texorpdfstring{$\rho'$}{rho'} is again uniformly meandering-hyperbolic}\label{sec:uhyp}

Assume that the action $\rho:\Ga\to\Homeo(M)$ is uniformly meandering-hyperbolic with a datum $(\D_0\prec\D;N)$ (Definition~\ref{def:mhyp}). Then, for every $\eta\in(0,\de]$ and every $x\in\La$, all rays in $\Ray_x(\D_0,\eta)$ are $(\D,\eta;N)$-equivalent.  In order to show that every $\rho'\in U(\rho;K,\ep)$ is uniformly meandering-hyperbolic, we first let $\D'$ be the expansion data of $\rho'$ obtained from $\D$ via the way described in Section~\ref{sec:expand} by taking $r=\de/5$, so that, in particular,
\[
\U'=\{U_i':=\intr (U_i\cap N_{\de}(\La) )^{\de/5}=\intr U_i^{\de/5}\cap N_{4\de/5}(\La) \mid i \in \I\}.
\]
Then, we will define an expansion data $\D_0'\prec \D'$ of $\rho'$ and show that $(\D_0'\prec\D';N)$ is a uniform meandering hyperbolicity datum of $\rho'$.

Since $\D_0\prec D$, one can easily derive an expansion datum $\D_0'=(\I'_0,\U'_0,\vS'_0,\allowbreak\de'_0,L'_0,\la'_0)$ of $\rho'$ from $\D_0$ as follows. Set $\I_0'=\I_0$, $\vS'_0=\vS_0$, $L'_0=L'$ and $\la'_0=\la'$. Let $\U_0=\{U_{0,i} \mid i\in\I_0\}$. From $\U_0\prec\U$, it immediately follows that $\rho'(s_i^{-1})$ is $\la'$-expanding on $U_{0,i}\cap N_\de(\La)$ and  $L'$-Lipschitz on $N_\de(\La)$ for all $i \in \I_0$. If we set
\[
\U'_0=\{U_{0,i}':=\intr (U_{0,i}\cap N_{\de}(\La) )^{3\de/5}=\intr U_{0,i}^{3\de/5}\cap N_{2\de/5}(\La) \mid i\in \I_0\},
\]
then Remark~\ref{rem:expansion}(e) implies that there is $\de_0'>0$ such that $\rho'(s_i^{-1})$ is $(\la',U_{0,i}';\de_0')$-expanding for all $i\in\I_0$.
For any $x\in \La$, since $d(x,\phi(x))<\de/5$, we have
\[
\ov{B}_{3\de/5}(\phi(x))\subset B_{4\de/5}(x) \subset B_{\de}(x) \subset U_{0,i}\cap N_\de(\La)
\]
for some $i\in\I_0$. We may choose $\de_0'$ and $\de'$ so that $\de'<\de_0'<2\de/5$. Then $\D_0'$ is another expansion datum of $\rho'$ such that $\D_0'\prec\D'$.

In order to check that $(\D_0'\prec\D';N)$ is a uniform meandering hyperbolicity datum of $\rho'$, let $0<\eta\leq \de'<2\de/5$ and $x\in\La$. We need to show that all rays in $\Ray_x(\D_0',\eta)$ are $(\D',\eta;N)$-equivalent.

Let $x\in \La$ and $(\al,p')$ be a $(\D_0',\eta)$-code for $\phi(x)$.
Recall Definition~\ref{def:code}. Since $(\al,p')$ is an $(\D'_0,\eta)$-code for $\phi(x)\in\La'$, we have
\begin{align*}
p'_0&=\phi(x),\\
p'_{k+1}&=\rho'(s_{\al(k)}^{-1})(p'_k),\\
B_{\eta}(p'_k)&\subset U'_{0,\al(k)} 
\end{align*}
for all $k\in\N_0$. By the definition of $U_{0,i}':=\intr U_{0,i}^{3\de/5}\cap N_{2\de/5}(\La)$, it follows that $B_{\eta+3\de/5}(p_k')\subset U_{0,\al(k)}$ for all $k\in \N_0$.
Let $p_k=\phi^{-1}(p_k')$ for $k\in \N_0$ and $p : \N_0 \to \La$ be the sequence corresponding to $\{p_k\}_{k\in \N_0}$.
We first check
\[
(\phi^{-1}\circ p')(0)=\phi^{-1}(\phi(x))=x
\]
and then, for $k\in\N$, check
\[
p_{k+1}=(\phi^{-1}\circ p')(k+1)
=\phi^{-1}[\rho'(s_{\al(k)}^{-1})(p'_k)]
=\rho(s_{\al(k)}^{-1})[(\phi^{-1}\circ p')(k)]=\rho(s_{\al(k)}^{-1})(p_k),
\]
where the second equality is due to the equivariance of $\phi$.
Since $d(p_k,p_k')<\de/5$, we have that for all $k\in \N_0$,
\[
B_{\eta+2\de/5}(p_k) \subset B_{\eta+3\de/5}(p_k')\subset U_{0,\al(k)},
\]
which implies that $(\al, p)$ is a $(\D_0,\eta+2\de/5)$-code for $x\in \La$.

By the uniform meandering hyperbolicity of $\rho$, there is a finite chain of interpolating rays $c^\al=c^{\ga_1}, c^{\ga_2},\ldots, c^{\ga_n}=c^\be$ in $\Ray_x(\D,\eta+2\de/5)$ such that
\[
c^{\ga_1} \approx^N_{(\D,\eta+2\de/5)} c^{\ga_2} \approx^N_{(\D,\eta+2\de/5)} \cdots \approx^N_{(\D,\eta+2\de/5)} c^{\ga_n}.
\]
\begin{claim*}
$\Ray_x(\D,\eta+2\de/5)\subset \Ray_{\phi(x)}(\D', \eta)$.
\end{claim*}
If the claim holds, we have
$ c^{\ga_1} \approx^N_{(\D',\eta)} c^{\ga_2} \approx^N_{(\D',\eta)} \cdots \approx^N_{(\D',\eta)} c^{\ga_n}$
and, therefore, we conclude that $\rho'$ is uniformly meandering-hyperbolic with a datum $(\D_0'\prec \D';N)$.

Now it only remains to prove the claim. Let $(\ga, q)$ be a $(\D,\eta+2\de/5)$-code for $x\in \La$. Then for all $k\in \N_0$,
\[
B_{\eta+\de/5}(\phi(q_k)) \subset B_{\eta+2\de/5}(q_k)\subset U_{\ga(k)}\cap N_{4\de/5}(\La),
\]
which implies that $B_\eta(\phi(q_k))\subset U_{\ga(k)}'$ and thus $(\ga, q')$ is a $(\D',\eta)$-code for $\phi(x)$ as desired, where $q'$ is the sequence defined by $q'_k=\phi(q_k)$ for $k\in \N_0$. The claim is proved.

\section{Uniform lattices in semisimple Lie groups  are structurally stable}\label{sec:ulattice} 

In this section we shall prove the following theorem that, for every uniform lattice $\Ga$ in a semisimple Lie group $G$, the $\Ga$-action on each flag manifold $G/P$ is meandering-hyperbolic.

\begin{theorem}\label{thm:ulattice}
Let $\Ga< G$ be a uniform lattice in a semisimple  Lie group $G$ (with finitely many connected components and finite center). Then, for each face $\tad\subset\sid$, the $\Ga$-action on the flag manifold $\Flag(\tad)$ is uniformly meandering-hyperbolic.
\end{theorem}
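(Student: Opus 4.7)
The plan is to exploit the fact that the orbit $\Ga\cdot o\subset X$ is a net (since $\Ga$ is a uniform lattice) together with the contraction-expansion dynamics of uniformly $\tad$-regular elements of $G$ on $\Flag(\tad)$. I first construct the expansion datum $\D_0$. For any uniformly $\tad$-regular element $g\in G$ with attracting flag $\tau^+_g\in\Flag(\tad)$, the action of $g^{-1}$ on $\Flag(\tad)$ is strongly expanding on a neighborhood of any flag transverse to $\tau^+_g$, with expansion factor growing like $e^{\mu_{\min}(g)}$ where $\mu_{\min}$ measures the distance of $\mu(g)$ from the walls of $\sid$ corresponding to $\tad$. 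By cocompactness of $\Ga$ on $X$, one finds finitely many uniformly $\tad$-regular elements $\ga_\al\in\Ga$ whose repelling flags $\tau^-_{\ga_\al}$ (for $\rho(\ga_\al^{-1})$) form a net dense enough that the corresponding expansion subsets $U_\al\subset\Flag(\tad)$ cover $\Flag(\tad)$. Symmetrizing and completing to a generating set of $\Ga$ (filling in by Lemma~\ref{lem:smallU} and Remark~\ref{rem:expansion}(c,e) as necessary) produces the expansion datum $\D_0$.

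Next I interpret codes geometrically. By Corollary~\ref{cor:qg}, a $(\D_0,\de_0)$-ray $c^\al:\N_0\to\Ga$ is a uniform word-metric quasigeodesic. The $\Ga$-equivariant orbit map $\Ga\to\Ga\cdot o\subset X$ is a quasi-isometry (\v Svarc--Milnor), so $(c^\al_i\cdot o)$ is a uniform quasigeodesic sequence in $X$. Moreover, because each transition $p_{i+1}=\rho(s_{\al(i)}^{-1})p_i$ is realized by a uniformly $\tad$-regular element whose contracting flag is uniformly close to $p_{i+1}$, one reads off from the expansion estimates of Lemma~\ref{lem:nest} that the displacements $d_\De(c^\al_i\cdot o, c^\al_{i+1}\cdot o)$ lie in a fixed $W_\tad$-convex compact subset $\Theta\subset\intr_\tad(\sid)$. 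Consequently the sequence is a uniform $(\Theta,R)$-Morse quasigeodesic ray asymptotic to $\tau$ in the sense of KLP, and \cite{KLP17}*{Theorem 5.53} provides a uniformly $\tad$-regular Finsler geodesic ray in $X$ terminating at $\tau$ that is uniformly Hausdorff close to $(c^\al_i\cdot o)$.

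The core of the argument is the interpolation step realizing meandering hyperbolicity. Given two $(\D_0,\de_0)$-codes $\al,\be$ for the same $\tau\in\Flag(\tad)$, the preceding paragraph produces two Morse quasigeodesic rays in $X$ that are both asymptotic to $\tau$ and uniformly close to uniformly $\tad$-regular Finsler rays; both eventually lie (up to a uniform error) in the parallel set $P(\tau,\tau^-)$ for some antipode $\tau^-\in\Flag(\iota\tad)$ in their common accumulation set. This parallel set splits as a Riemannian product of a Euclidean factor and a lower-dimensional symmetric space transverse to the flat directions of $\tau$, and two Finsler rays terminating at $\tau$ differ only by translation in the Euclidean factor and motion in the transverse factor, both occurring at bounded Finsler rate. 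To refine $\D_0$ to $\D$, I enlarge the generating set $\vS_0$ by adjoining further uniformly $\tad$-regular elements of $\Ga$ whose displacements shadow small horocyclic/transverse shifts in the parallel set; cocompactness of $\Ga$ on $X$ guarantees that every such shift of uniformly bounded size is realized, up to a uniform error, by a word of uniformly bounded length in the enlarged $\vS$. Using this, given Morse rays $c^\al\cdot o$ and $c^\be\cdot o$ asymptotic to $\tau$, I construct a single interpolating Morse quasigeodesic $c^\ga\cdot o$ that ``meanders'' between them within $P(\tau,\tau^-)$, staying within Hausdorff distance $N$ of each on an infinite subset of indices, where $N$ depends only on the regularity of $\Theta$, the Morse constant $R$, and the diameter of $\Ga\backslash X$. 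Since all the above constants are independent of $\tau$ and of the scale $\eta\in(0,\de]$ used in forming codes (the Morse geometry scales uniformly), this yields uniform meandering hyperbolicity with datum $(\D_0\prec\D;N)$.

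The main obstacle is the construction of the interpolating ray $c^\ga$: one must certify that a bounded-length $\Ga$-word realizes arbitrary small horocyclic/transverse motions in parallel sets, and that the interpolating sequence remains uniformly $\tad$-regular (so as to stay Morse). This is precisely where the uniform lattice hypothesis is essential, and where the higher-rank geometry of the parallel set, combined with the KLP Morse theory, does the real work; in rank one the Euclidean and transverse factors of $P(\tau,\tau^-)$ are trivial, so no interpolation is needed and one recovers S-hyperbolicity directly.
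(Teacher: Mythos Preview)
Your high-level outline matches the paper's: build an expansion datum from cocompactness, show coding rays are Morse quasigeodesics asymptotic to $\tau$, then interpolate. But several steps in your execution are genuine gaps rather than details.

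First, your geometric setting for the interpolation is wrong. You place both coding rays in a parallel set $P(\tau,\tau^-)$ for ``some antipode $\tau^-$ in their common accumulation set,'' but two Morse rays asymptotic to the same $\tau$ need not share an antipode; in higher rank there is no reason their backward directions agree. The correct container is the Weyl cone $V(x,\st(\tau))$, which depends only on the basepoint and on $\tau$. The paper's interpolation (Lemma~\ref{lem:interpolation}) is a zigzag of longitudinal $\Theta'$-regular geodesic segments inside this cone, using the nestedness of $\Theta'$-cones to keep the concatenation a Finsler geodesic; your parallel-set/horocyclic picture does not supply this.

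Second, and more seriously, you never verify that the interpolating ray is a \emph{coding ray}. The relation $\sim^N_{(\D,\eta)}$ is defined on $\Ray_\tau(\D,\eta)$, so the interpolating sequence $c^\ga$ must arise from an actual $(\D,\eta)$-code $\ga$: at every step one needs $B_{\de}(g_i^{-1}\tau)\subset U_{\ga(i)}$ for some $\ga(i)\in\I$. This is the heart of the paper's \S\ref{sec:ulattice4}--\ref{sec:ulattice5}: one enlarges $\vS$ by adjoining all $g\in\Ga$ with $d_\De(x,gx)\in V(0,W_\tad\Theta'')\cap\De$ and $l-2D<d_X(x,gx)<2l+2D$, where $l$ is chosen via three carefully calibrated constants $l_1,l_2,l_3$ so that each transition $h_i=g_i^{-1}g_{i+1}$ satisfies $\ep(h_i^{-1},g_i^{-1}\tau)>2\la$, forcing $B_{\de_\eta}(g_i^{-1}\tau)\subset U^{\la}_{h_i}$. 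Your sentence ``cocompactness guarantees that every such shift is realized by a word of bounded length'' does not address this expansion requirement at all.

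Third, your claim that ``all the above constants are independent of the scale $\eta$'' is unjustified. In the paper the Morse datum $(\Theta,R)$ a priori depends on $(\D,\eta)$ (Lemma~\ref{lem:CodingMorse}), and hence so do $\Theta',\Theta'',l$ and the refinement $\D_\eta$. The paper absorbs this $\eta$-dependence through Proposition~\ref{prop:umhp}, which shows that having, for each $\eta$, a refinement $\D_\eta$ and constant $N_\eta$ is already enough to produce a single uniform meandering hyperbolicity datum. You would need either to prove the $\eta$-independence you assert or to invoke an analogous reduction.

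Finally, your argument that coding rays are Morse via ``each displacement $d_\De(c^\al_i\cdot o,c^\al_{i+1}\cdot o)$ lies in $\Theta$'' is incomplete: the generators in the initial expansion datum are not all uniformly $\tad$-regular (some have empty expansion subsets), and step-by-step regularity does not by itself yield the Morse property. The paper instead proves uniform $\tad$-regularity of the whole sequence via the $\tad$-convergence property and conical convergence to $\tau$, then invokes \cite{KLP18}*{Theorem~1.3} and \cite{KLP17}*{Theorem~2.41}.
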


From the structural stability theorem (Theorem~\ref{thm:main}) we then conclude:

\begin{corollary}\label{cor:ulattice}
The actions on flag manifolds of uniform lattices in semisimple Lie groups are structurally stable in the sense of Lipschitz dynamics.
\end{corollary}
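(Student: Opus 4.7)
The plan is to derive the corollary as an immediate consequence of Theorem~\ref{thm:ulattice} (that the $\Ga$-action on each flag manifold is uniformly meandering-hyperbolic) combined with Theorem~\ref{thm:main}(1) (that meandering hyperbolicity implies structural stability in the Lipschitz sense). Since all of the substantive work is carried out in the proofs of those two theorems, the corollary is essentially a one-line deduction; the only task is to verify that the general setup of Section~\ref{sec:statement} applies to our situation.

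First I would fix the data: a semisimple Lie group $G$ (with finitely many components and finite center), a uniform lattice $\Ga<G$, and an $\iota$-invariant face $\tad\subset\sid$. Set $M=\La=\Flag(\tad)$, equipped with the auxiliary Riemannian metric from Section~\ref{sec:sspace}, and let $\rho:\Ga\to\Homeo(M)$ denote the restriction of the left $G$-action. Since $\Flag(\tad)$ is a compact smooth manifold, $(M,d)$ is a proper metric space with no isolated points; the subset $\La=M$ is compact and $\Ga$-invariant, so the action fits the framework of Section~\ref{sec:statement} as $\rho:\Ga\to\Homeo(M;\La)$.

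Next I would invoke Theorem~\ref{thm:ulattice}, which guarantees that $\rho$ is uniformly meandering-hyperbolic, hence in particular meandering-hyperbolic in the sense of Definition~\ref{def:mhyp} with some datum $(\D_0\prec\D;N)$. This verifies the hypothesis of Theorem~\ref{thm:main}(1). Applying that theorem produces a compact set $K\supset\La$ and a constant $\ep=\ep(\D,N)>0$ such that for every perturbation $\rho'\in U(\rho;K,\ep)$ there exist a $\rho'$-invariant compact subset $\La'\subset M$ and an equivariant homeomorphism $\phi:\La\to\La'$. By definition, this is precisely structural stability in the sense of Lipschitz dynamics, which is the conclusion of the corollary.

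There is no real obstacle here, since the corollary is a pure citation. The only point worth flagging is that the Lipschitz-type perturbation neighborhoods $U(\rho;K,\ep)$ from \eqref{eqn:pert} are taken inside $\Hom(\Ga,\Homeo(M))$ with respect to the compact-open Lipschitz topology on $\Homeo(M)$ induced by the auxiliary Riemannian distance; this is consistent with the hypothesis on the metric used in Theorem~\ref{thm:ulattice}, so no issue arises. If desired, one may also record (from the ``uniformly'' clause of Theorem~\ref{thm:ulattice} together with Theorem~\ref{thm:main}(4)) that every nearby action $\rho'$ is again uniformly meandering-hyperbolic, yielding an open neighborhood of $\rho$ on which the stability statement propagates.
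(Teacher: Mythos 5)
Your proposal is correct and follows exactly the paper's route: the corollary is deduced by combining Theorem~\ref{thm:ulattice} (uniform meandering hyperbolicity of the lattice action on $\Flag(\tad)$) with Theorem~\ref{thm:main}(1), which is precisely how the paper presents it. Your additional checks — that $\Flag(\tad)$ is a compact Riemannian manifold without isolated points, so the framework of Section~\ref{sec:statement} applies — are sound and merely make explicit what the paper leaves implicit.
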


\subsection{Sketch of proof}

Let $X$ be the symmetric space associated with the semisimple Lie group $G$.  Our proof will use definitions and results regarding asymptotic geometry of $X$ established 
in a series of papers of Kapovich, Leeb and Porti \cites{KL18b, KLP17, KLP18}, to which we refer the reader for details; surveys of these notions and results can be found in \cites{KLP16, KL18a}.

The key notion used in the proof is the one of {\em Morse quasigeodesics} in $X$, which was first introduced in \cite{KLP14} in order to give one of many alternative interpretations of Anosov subgroups. While uniform lattices in higher rank Lie groups are clearly non-Anosov, some key geometric results developed for the purpose of analyzing Anosov subgroups are still very useful when dealing with higher rank lattices, as we will see below.

In the remainder of the section we prove Theorem~\ref{thm:ulattice} in the following steps:
\begin{enumerate}[label=(\arabic*),leftmargin=35pt,parsep=0pt,itemsep=0pt]
\item[(\S\ref{sec:ulattice1})] we find an expansion datum $\D$ for the action;
\item[(\S\ref{sec:ulattice2})] we show that coding rays are Morse quasigeodesics, and
\item[(\S\ref{sec:ulattice3})] that asymptotic Morse quasigeodesics can be interpolated;
\item[(\S\ref{sec:ulattice4})] for each $\eta \in (0,\de]$, we find a refinement $\D_\eta$ of the expansion datum $\D$;
\item[(\S\ref{sec:ulattice5})] we verify the uniform meandering hyperbolicity by using Proposition~\ref{prop:umhp} below.
\end{enumerate}

\begin{proposition}\label{prop:umhp}
Let $\rho:\Ga \to \Homeo(M)$ be expanding at $\La$ with a datum $\D=(\I,\U,\vS,\allowbreak\de,L,\la)$. Suppose that, for each $\eta\in (0,\de]$, there exist a positive constant $r_\eta<\eta$, an integer $N_\eta>0$ and an expansion datum $\D_\eta=(\I_\eta,\U_\eta,\vS_\eta,\allowbreak r_\eta,L_\eta,\la_\eta)\succ \D$ such that for every $x\in \La$, all rays in $\Ray_x(\D,\eta)$ are $(\D_\eta,r_\eta;N_\eta)$-equivalent as rays in $\Ray_x(\D_\eta,r_\eta)$. Then $\rho$ is uniformly meandering-hyperbolic.
\end{proposition}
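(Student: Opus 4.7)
The plan is to fix the witnessing data $(\D^*,\de^*,N^*)$ by applying the hypothesis at the single value $\eta=\de$, and then to verify the uniform property for every $\eta^*\in(0,\de^*]$ by invoking the hypothesis a second time at $\eta=\eta^*$ and transferring the resulting equivalence to the chosen ambient data.

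Concretely, first apply the hypothesis at $\eta=\de$ to obtain a refinement $\D_\de\succ\D$ with threshold $r_\de<\de$ and integer $N_\de$, and set $\D^*:=\D_\de$, $\de^*:=r_\de$, $N^*:=N_\de$. Given $\eta^*\in(0,\de^*]$ and $x\in\La$, any two rays $c^\al,c^\be\in\Ray_x(\D,\eta^*)$ already lie in $\Ray_x(\D^*,\eta^*)$ by Remark~\ref{rem:ray}(c), since $\eta^*\le\de^*<\de$ and $\D\prec\D^*$. Applying the hypothesis now at $\eta=\eta^*$ produces a finite chain of interpolating rays in $\Ray_x(\D_{\eta^*},r_{\eta^*})$ joining $c^\al$ to $c^\be$, pairwise within Hausdorff distance $N_{\eta^*}$ in the word metric $d_{\vS_{\eta^*}}$.

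The main obstacle, and the heart of the proof, is to upgrade this $(\D_{\eta^*},r_{\eta^*};N_{\eta^*})$-equivalence to the desired $(\D^*,\eta^*;N^*)$-equivalence uniformly in $\eta^*$. The transfer rests on two ingredients: (i) one passes to a common further refinement $\wt{\D}$ of $\D^*$ and $\D_{\eta^*}$, which exists because refinements only enlarge index sets, covers and generating sets (while permitting $\de$ to shrink), and the inclusion $\Ray_x(\D_{\eta^*},r_{\eta^*})\subset\Ray_x(\wt{\D},r_{\eta^*})$ follows from Remark~\ref{rem:ray}(c); and (ii) enlarging the generating set only decreases the word metric, so the Hausdorff bound $N_{\eta^*}$ persists when rephrased in the coarser metric $d_{\vS^*}$. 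After replacing $\D^*$ at the outset by a sufficiently rich common refinement and $N^*$ by an upper bound dominating all the $N_{\eta^*}$, the interpolating chain descends into $\Ray_x(\D^*,\eta^*)$ with the required bound. The crucial prerequisite is that the $\D_{\eta^*}$ and $N_{\eta^*}$ be chosen \emph{uniformly} in $\eta^*$; in the intended application to uniform lattices this uniformity is delivered by the Morse quasigeodesic interpolation of Section~\ref{sec:ulattice3}, whose geometric parameters depend only on the ambient symmetric space and not on $\eta^*$, so that a single refinement $\D^*$ and single constant $N^*$ suffice simultaneously for all $\eta^*\in(0,\de^*]$.
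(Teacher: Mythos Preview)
Your proposal does not prove the proposition as stated. The hypothesis allows $\D_\eta$ and $N_\eta$ to depend arbitrarily on $\eta$; nothing forces the generating sets $\vS_{\eta^*}$ to stabilize or the integers $N_{\eta^*}$ to be bounded as $\eta^*\to 0$. Your final paragraph effectively concedes this, retreating to an additional uniformity assumption (``the $\D_{\eta^*}$ and $N_{\eta^*}$ be chosen uniformly in $\eta^*$'') that is \emph{not} part of the hypothesis. Moreover, even granting a common refinement $\wt\D\succ\D^*,\D_{\eta^*}$, the interpolating rays you obtain live in $\Ray_x(\D_{\eta^*},r_{\eta^*})$ and use the cover $\U_{\eta^*}$; there is no mechanism offered for turning them into $(\D^*,\eta^*)$-rays, which is what the definition of $(\D^*,\eta^*;N^*)$-equivalence requires.

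The paper's proof sidesteps all of this by applying the hypothesis only \emph{once}, at the fixed value $\eta=\de/2$. The key idea you are missing is to manufacture a new \emph{initial} datum $\D_0$ by shrinking the cover to $\U_0=\{\intr U_\al^{\de/2}\mid\al\in\I\}$ and setting $\de_0=r_{\de/2}+\de/2$. Then for any $\eta\le r_{\de/2}$, a $(\D_0,\eta)$-code $(\al,p)$ satisfies $B_\eta(p_i)\subset\intr U_{\al(i)}^{\de/2}$, hence $B_{\de/2}(p_i)\subset U_{\al(i)}$, so it is automatically a $(\D,\de/2)$-code. The hypothesis at $\eta=\de/2$ now supplies a single refinement $\D_{\de/2}$ and integer $N_{\de/2}$, and the interpolating $(\D_{\de/2},r_{\de/2})$-rays are $(\D_{\de/2},\eta)$-rays for every $\eta<r_{\de/2}$ by Remark~\ref{rem:code}(b). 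Thus $(\D_0\prec\D_{\de/2};N_{\de/2})$ works uniformly, with no uniformity required of the hypothesis.
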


\begin{proof} 
To define a uniform meandering hyperbolicity datum of $\rho$, we set $\de_0=r_{\de/2}+\de/2<\de$, $\U_0=\{\intr U_{i}^{\de/2} \mid i\in\I\}$ (recall \eqref{eqn:ur}) and $\D_0=(\I,\U_0,\vS,\allowbreak \de_0,L,\la)$. Obviously, $\D_0$ is  an expansion datum of $\rho$ since $\U_0\prec\U$ and $\de_0 <\de$. We take $\D_0$ as the initial expansion datum of $\rho$ and claim that $(\D_0\prec\D_{\de/2};N_{\de/2})$ is a uniform meandering hyperbolicity datum of $\rho$.

For $\eta\in(0,r_{\de/2})$, let $\al$ and $\be$ be $(\D_0,\eta)$-codes for $x\in \La$. 
Then, by the definition of $\U_0$, it can be easily seen that $\al$ and $\be$ are $(\D,\eta+\de/2)$-codes for $x$ and hence $(\D,\de/2)$-codes for $x$. In other words, all $(\D_0,\eta)$-codes are regarded as $(\D,\de/2)$-codes for any $\eta\in(0,r_{\de/2})$. By the hypothesis, there is a finite chain of interpolating rays $c^\al=c^{\ga_1}, c^{\ga_2},\ldots, c^{\ga_n}=c^\be$ in $\Ray_x(\D_{\de/2},r_{\de/2})$ such that
\[
c^{\ga_1} \approx^{N_{\de/2}}_{(\D_{\de/2},r_{\de/2})} c^{\ga_2} \approx^{N_{\de/2}}_{(\D_{\de/2},r_{\de/2})} \cdots \approx^{N_{\de/2}}_{(\D_{\de/2},r_{\de/2})} c^{\ga_n}.
\]
Since $\eta<r_{\de/2}$, all the $(\D_{\de/2},r_{\de/2})$-codes $\ga_i$ are $(\D_{\de/2},\eta)$-codes. Hence all rays in $\Ray_x(\D_0,\eta)$ are $(\D_{\de/2},\eta;N_{\de/2})$-equivalent. Furthermore, one can easily check that $\D_0 \prec \D_{\de/2}$. Therefore, we conclude that $(\D_0\prec\D_{\de/2};N_{\de/2})$ is a uniform meandering hyperbolicity datum of $\rho$.
\end{proof}

\subsection{Expansion}\label{sec:ulattice1}

In order to obtain an expansion data for the $\Ga$-action on the flag manifold $\Flag(\tad)$, we proceed as in Remark~\ref{rem:expansion}(c). Roughly speaking, we will construct a finite open cover of $\Flag(\tad)$ whose elements are expanding subsets for some elements of $\Ga$.

First fix a number $\la>1$ and a base point $x\in X$.
Let $D>0$ be the diameter of a compact fundamental domain for the $\Ga$-action on $X$. Then the orbit $\Ga x \subset X$ is $D$-dense in $X$, that is, $B_D(y)\cap \Ga x \neq \emptyset$ for any $y\in X$. In view of \cite{KLP17}*{Theorem~2.41}, the $D$-density of $\Ga x$ implies that for each $\tau\in\Flag(\tad)$, there is an element $g_\tau\in\Ga$ such that $\ef(g_\tau^{-1}, \tau)>\lambda$. Define an open subset $U_\tau\subset\Flag(\tad)$ by
\[
U_\tau=\{\tau'\in\Flag(\tad) \mid \ef(g_\tau^{-1},\tau')>\la\},
\]
so that $\tau\in U_\tau$ and $g^{-1}_\tau$ is $(\la,U_\tau)$-expanding. Then we have an open cover $\{U_\tau\}_{\tau\in \Flag(\tad)}$ of $\Flag(\tad)$. Since $\Flag(\tad)$ is compact, there is a finite subcover $\{U_{\tau_1},\ldots, U_{\tau_k}\}$. For simplicity, let us abbreviate $U_{\tau_i}$ and $g_{\tau_i}$ to $U_i$ and $g_i$, respectively, for $i=1,\ldots,k$.

By adding extra generators to $\{g_1,\ldots,g_k\}$ with empty expanding subsets, we obtain a symmetric generating set $\vS$ of $\Ga$. Let $\I$ be the index set for $\vS$ and $\U$ the collection of all $(\la,g_i^{-1})$-expanding subsets $U_i$ where $g_i\in\vS$. Since $\Flag(\tad)$ is compact, $\vS$ is finite and the $\Ga$-action is by $C^1$-diffeomorphisms, there exist a constant $L>1$ such that every $g_i\in\vS$ is $L$-Lipschitz on $\Flag(\tad)$. Let $\de<\de_{\U}$ be a Lebesgue number of $\U$. Then $\D=(\I,\U,\vS,\allowbreak\de,L,\la)$ is an expansion datum of the $\Ga$-action on $\Flag(\tad)$.

\subsection{Coding rays are uniform Morse quasigeodesics}\label{sec:ulattice2}

In the proof of the next lemma we will be using the notions of $\tad$-regular (and uniformly regular) sequences and $\tad$-convergence property for such sequences; the reader will find a detailed treatment of these notions in \cite{KLP17}*{Section 4}. 

\begin{lemma}\label{lem:CodingMorse}
Let $\eta\in(0,\de]$ and $\al$ be a $(\D,\eta)$-code for $\tau\in \Flag(\tad)$ and $c^\al:\N_0\to\Ga<G$ be the ray associated to $\al$. Then the sequence $c^\al x$ in $X$ is a $(\Theta,R)$-Morse quasigeodesic with endpoint $\tau$ for some data $(\Theta,R)$ depending only on the pair $(\D,\eta)$.
\end{lemma}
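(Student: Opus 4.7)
The strategy is to verify the Morse property of the orbit $c^\al x$ by combining the coarse quasigeodesicity of coding rays (Corollary~\ref{cor:qg}) with the flag-expansion data encoded in $(\D,\eta)$-codes (Lemma~\ref{lem:nest}), and then invoking the dictionary between flag-expansion and $\De$-valued displacement from \cite{KLP17}.

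\emph{Step 1 (Uniform quasigeodesic in $X$).} By Corollary~\ref{cor:qg}, the ray $c^\al : \N_0 \to \Ga$ is an $(A, C)$-quasigeodesic in the Cayley graph $(\Ga, d_\vS)$ with constants depending only on $(L, \la)$. Because $\Ga$ is a uniform lattice in $G$, the orbit map $\Ga \to X$, $g \mapsto gx$, is a $\Ga$-equivariant quasi-isometry. Hence $c^\al x$ is a uniform discrete quasigeodesic in $X$ with constants depending only on $(\D, \eta)$ and on the lattice.

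\emph{Step 2 (Uniform $\tad$-regularity of pairwise displacements).} By the final statement of Lemma~\ref{lem:nest}, for every $0 \leq j < i$ the element
\[
h_{ij} := (c^\al_i)^{-1} c^\al_j = s_{\al(i)}^{-1}\cdots s_{\al(j+1)}^{-1} \in \Ga
\]
is $\la^{i-j}$-expanding on a non-empty open subset of $\Flag(\tad)$, while being at most $L^{i-j}$-Lipschitz on all of $\Flag(\tad)$. Invoking the Kapovich--Leeb--Porti dictionary between flag-expansion/contraction and the shape of $\De$-valued displacements (\cite{KLP17}*{Section~4}), this dual exponential bound forces $d_\De(x, h_{ij} x)$ to lie in $\R_{\geq 0}\cdot \Theta$ for a fixed $W_\tad$-convex compact subset $\Theta \subset \mathrm{int}_\tad(\sid)$ depending only on $(\D, \eta)$. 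Equivalently, all pairwise $\De$-valued displacements along the orbit $c^\al x$ are uniformly $\Theta$-regular.

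\emph{Step 3 (Higher-rank Morse lemma and endpoint).} Steps 1 and 2 together present $c^\al x$ as a uniform discrete quasigeodesic in $X$ whose pairwise $\De$-valued displacements are uniformly $\Theta$-regular. The higher-rank Morse lemma (\cite{KLP17}*{Theorem~5.53} and the surrounding characterization of Morse quasigeodesics via uniform regularity) then provides a constant $R = R(\Theta, A, C, X) > 0$ such that $c^\al x$ is a $(\Theta, R)$-Morse quasigeodesic. For the endpoint, Lemma~\ref{lem:nest} shows that the nested subsets $c^\al_i [B_\eta(p_{i+1})] \subset \Flag(\tad)$ shrink to $\{\tau\}$, yielding $\tad$-flag-convergence $c^\al_i \to \tau$; combined with the Morse property, this identifies $\tau$ as the ideal endpoint of $c^\al x$.

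\emph{Main obstacle.} The delicate point is Step~2. A single generator $s_\al$ need not be $\tad$-regular at all, and large flag-expansion at a single point of $\Flag(\tad)$ does not a priori yield uniform $\tad$-regularity of $d_\De(x, h_{ij} x)$. One has to carefully pair the exponential expansion lower bound $\la^{i-j}$ against the exponential Lipschitz upper bound $L^{i-j}$, and use the fact that the expansion is happening on the flag manifold of the prescribed face type $\tad$, in order to pin the Cartan projection into a fixed compact cone in $\mathrm{int}_\tad(\sid)$; this quantitative correspondence is exactly what \cite{KLP17}*{Section~4} supplies.
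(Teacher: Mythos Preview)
Your Step~2 is where the argument breaks down. The correspondence you invoke in \cite{KLP17} between flag-expansion and $\De$-valued displacement is \cite{KLP17}*{Theorem~2.41}, and it carries a side condition: the estimate $\log\ep(g^{-1},\tau)\asymp d_X(gx,\partial V(x,\st(\tau)))$ only holds once $gx$ is already known to lie within bounded distance of the Weyl cone $V(x,\st(\tau))$. You have not established this proximity, so you cannot read off $\Theta$-regularity of $d_\De(x,h_{ij}x)$ from the expansion bound alone. The accompanying Lipschitz bound $L^{i-j}$ does not rescue this: it controls the \emph{maximal} root value of the Cartan projection, but $\Theta$-regularity with $\Theta\subset\mathrm{int}_\tad(\sid)$ requires \emph{all} the roots cutting out the walls of $\st(\tad)$ to be uniformly bounded away from zero, and the pair (large max expansion, bounded Lipschitz) does not force that. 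Your ``Main obstacle'' paragraph correctly locates the difficulty but does not resolve it; the citation of an entire section of \cite{KLP17} is not a substitute for a specific mechanism.

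The paper closes this gap by a different route. It first proves plain (non-uniform) $\tad$-regularity of the sequence $c^\al$: Theorem~\ref{thm:minimal}(1) supplies a closed ball $B\subset\Flag(\tad)$ on which $c^\al$ subconverges to $\tau$, and the Zariski density of $B$ forces $\tad$-regularity (\cite{KL18b}*{Theorem~9.6}). It then upgrades this to \emph{conical} convergence $c^\al\to\tau$: the inverse sequence flag-converges to some $\tau_-$, the ball $B$ (hence its center, a subsequential limit of $(c^\al)^{-1}(\tau)$) lands in the open Schubert cell $C(\tau_-)$, and \cite{KLP17}*{Proposition~5.31} yields conicality. Conical convergence is precisely what puts $c^\al_i x$ near $V(x,\st(\tau))$; only then does the paper invoke \cite{KLP17}*{Theorem~2.41} on the uniform exponential expansion from Lemma~\ref{lem:nest} to extract uniform $\Theta$-regularity. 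Finally, the passage from ``uniform quasigeodesic $+$ uniformly $\tad$-regular'' to ``Morse'' is \cite{KLP18}*{Theorem~1.3}, not \cite{KLP17}*{Theorem~5.53} (the latter goes in the other direction, from Morse to Finsler).
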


\begin{proof}
Recall from Corollary~\ref{cor:qg} that the ray $c^\al$ is a uniform quasigeodesic in $\Ga$ which is quasi-isometric to $X$. Thus, in order to prove the Morse property, it suffices to establish the uniform $\tad$-regularity of the sequence $c^\al$; see \cite{KLP18}*{Theorem~1.3}.  

First of all, there exists a closed ball $B\subset\Flag(\tad)$ such that $c^\al$ restricted to $B$ subconverges uniformly to $\tau$. Moreover, the limit set of a subsequence of $(c^\al)^{-1}(\tau)$ lies in $B$ by Theorem~\ref{thm:minimal}. Since $B$ is Zariski dense in $\Flag(\tad)$, the sequence $c^\al$ is $\tad$-regular; see the proof of \cite{KL18b}*{Theorem~9.6}.

Now we show that the sequence $c^\al$ is uniformly $\tad$-regular.
We first verify that the sequence $c^\al$ conically converges to $\tau$. Since it suffices to prove this for all subsequences in $c^\al$, we will freely pass to such subsequences. Note that the inverse sequence $(c^\al)^{-1}$ is also $\iota\tad$-regular and, hence, flag-subconverges to a simplex $\tau_-\in \Flag(\iota\tad)$. Furthermore, the sequence $c^\al$ subconverges to $\tau$ uniformly on compacts in the open Schubert cell $C(\tau_-)\subset  \Flag(\tad)$ of $\tau_-$ and, whenever $B'$ is a closed ball in  $\Flag(\tad)$ which is \emph{not}  contained in $C(\tau_-)$, the sequence $c^\al$ cannot subconverge to $\tau$ uniformly on $B'$. Thus the ball $B$, and hence its center $\nu$ (which is the limit of a subsequence in $(c^{\al})^{-1}(\tau)$), is contained in $C(\tau_-)$. Since $\nu\in C(\tau_-)$, according to \cite{KLP17}*{Proposition~5.31}, the convergence $c^\al\to \tau$ is conical. 

Lastly, by \cite{KLP17}*{Theorem~2.41}, the uniform exponential expansion of the sequence $(c^{\al})^{-1}$ at $\tau$ (guaranteed by Lemma~\ref{lem:nest}) implies (uniform) $\Theta$-regularity of the sequence $c^\al$, where $\Theta$ is a certain fixed $W_\tad$-convex compact subset of the open star $\mathrm{ost}(\tad)$ of $\tad$ in $\sid$.  
\end{proof}

\subsection{Morse interpolation}\label{sec:ulattice3}
 
As in Definition~\ref{def:N-equiv}, given two maps $p,q:\R_+\to X$ and an integer $N\ge0$, we define the relation $p\approx^N q$ by the condition that there exist unbounded monotonic sequences $(P_k)$ and $(Q_j)$ in $\R_+$ such that the subsets $\{p(P_k)\mid k\in\N_0\}$ and $\{q(Q_j)\mid j\in\N_0\}$ in $X$ are within Hausdorff distance $N$ from each other.

\begin{lemma}\label{lem:interpolation}
Let $p$ and $q$ be $(\Theta,R)$-Morse quasigeodesic rays in $X$ with $p(\infty)=q(\infty)=\tau$. Let $\Theta'\subset \mathrm{ost}(\tad)$ be a $W_\tad$-convex compact subset such that $\Theta\subset\mathrm{int}(\Theta')$. Given a constant $l>0$, there is a $(\Theta',R)$-Morse quasigeodesic ray $r$ with endpoint $r(\infty)=\tau$ in $X$ such that 
\begin{enumerate}[label=(\roman*),parsep=0pt,itemsep=0pt]
\item $r(t_n) \in p(\R_+)$ if $n$ is odd and $r(t_n) \in q(\R_+)$ if $n$ is even, and hence  $p\approx^0  r\approx^0 q$,
\item  the restriction of $r$ to each interval $[t_n, t_{n+1}]$ is a $\Theta'$-regular geodesic,
\item $d_X(r(t_n),r(t_{n+1}))\geq l$ for all integers $n\geq 0$.
\end{enumerate}
Moreover, if $p(0)=q(0)=x\in X$, then $r$ can be chosen such that $r(0)=x$. 
\end{lemma}

\begin{figure}[ht]
\labellist
\pinlabel {$x$} at -6 70
\pinlabel {$r(1)$} at 80 78
\pinlabel {$r(2)$} at 146 74
\pinlabel {$r(3)$} at 224 82
\pinlabel {$r(4)$} at 340 56
\pinlabel {$r(5)$} at 430 100
\pinlabel {$p$} at 496 154
\pinlabel {$q$} at 474 0
\pinlabel {$r$} at 496 84
\endlabellist
\centering
\includegraphics[width=0.9\textwidth]{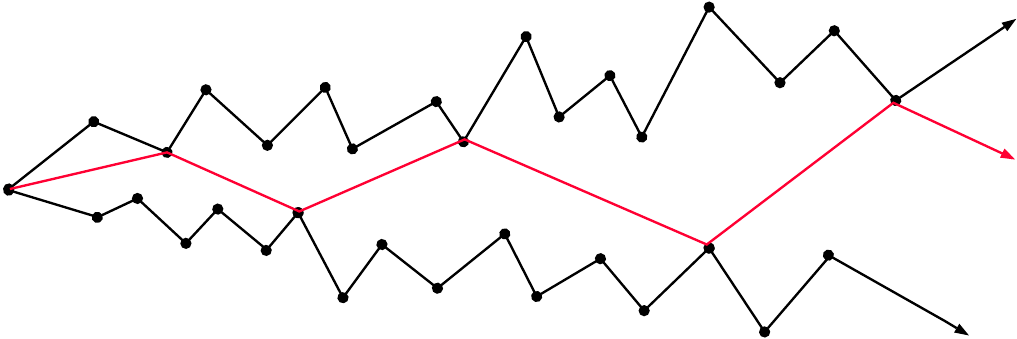}
\caption{Interpolating ray.}
\label{fig:int-ray}
\end{figure}

\begin{proof}
According to \cite{KLP17}*{Lemma 5.52}, both $p$ and $q$ are contained in the $R$-neighborhood of a cone $V=V(x,\st(\tau))$. We define the points $r(t_n)$ inductively as follows (see Figure~\ref{fig:int-ray}). Set$t_0=0$ and $r(0)=q(0)$. Suppose that $n$ is odd and $r(t_n)\in p(\R_+)$ is defined. Since the Morse quasigeodesic ray $q$ is $\Theta$-regular, the limit (as $t\to \infty$) of directions of oriented segments  
\[
r(t_n) q(t) 
\]
belongs to $\st_{\Theta}(\tau)$. Therefore, for a sufficiently large $s_n>0$, the directions of both geodesic segments $r(t_n)q(s_n)$ and $\bar r(t_n)\bar q(s_n)$ are in the interior of $\st_{\Theta'}(\tau)$ and $d_X(r(t_n),q(s_n))>l$ where $\bar r(t)$ and $\bar q(t)$ are the nearest projections of $r(t)$ and $q(t)$ onto $V$, respectively. Then we set $t_{n+1}=t_n+d_X(r(t_n),q(s_n))$ and $r(t_{n+1})=q(t_{n+1}$). When $n$ is even and $r(t_n)\in q(\R_+)$, the point $r(t_{n+1})$ is defined in the same way.

Since the cone $V$ is convex in $X$, the piecewise geodesic ray $\bar r :[0,\infty)\to V$ with vertices $(\bar r(t_n))_{n\in \N_0}$ is a geodesic ray whose restriction to each interval $[t_n,t_{n+1}]$ is $\Theta'$-regular and longitudinal. By the nestedness property of $\Theta'$-cones (see \cite{KLP17}*{Section 2.12}), the entire ray {$\bar r$} is a $\Theta'$-Finsler geodesic ray in $V$. Then the piecewise geodesic ray $r$ with vertices $(r(t_n))_{n\in \N_0}$ is $R$-Hausdorff close to the $\Theta'$-Finsler geodesic ray $\bar r$. Therefore, the ray $r$ is a $(\Theta',R)$-Morse quasigeodesic ray. From the above construction, the ray $r$ clearly satisfies $r(\infty)=\tau$ as well as (i), (ii) and (iii) of the lemma.
\end{proof}

\subsection{Refined expansion data}\label{sec:ulattice4}

Now we find a refinement $\D_\eta$ of the expansion datum $\D$ 
for each $\eta\in (0,\de]$. 

Let $\al$ and $\be$ be $(\D,\eta)$-codes for $\tau \in \Flag(\tad)$. By Lemma~\ref{lem:CodingMorse}, the associated rays $c^\al$ and $c^\be$ project in $X$ to $(\Theta,R)$-Morse quasigeodesic rays with endpoint $\tau$, where the datum $(\Theta,R)$ depends only on $\D$ and $\eta$. Hence $c^\al(n)x$ and $c^\be(n)x$ are contained in the $R$-neighborhood of $V(x, \text{st}_\Theta(\tau))$ for any integer $n\geq 0$. We fix compact $W_\tad$-convex subsets $\Theta', \Theta''\subset\text{ost}(\tad)$ in $\sid$ such that
\[
\Theta \subset \text{int}(\Theta') \subset \Theta' \subset \text{int}(\Theta'')\subset \Theta''.
\]

We first choose a constant $l$ such that
\begin{align}\label{eqn:l}
l>\max\{l_1,l_2,l_3\}+4(D+R),
\end{align}
where the constants $l_1$, $l_2$ and $l_3$ are defined as follows. First, choose a constant $l_1=l_1(\Theta',\Theta'',\allowbreak D,R)>0$ so that if $d_\De(x,y)\in V(0, W_{\tad}\Theta')\cap\De$ and $d_X(x,y)>l_1$, then 
\[
d_\De(x',y')\in V(0, W_{\tad}\Theta'')\cap\De
\]
for any $x'\in B_{D+R}(x)$ and any $y'\in B_{D+R}(y)$. This is possible due to the following inequality
\[
\|d_\De(x,y)-d_\De(x',y')\| \le d_X(x,x')+d_X(y,y') \le 2(D+R).
\]

For the second constant $l_2>0$, we recall \cite{KLP17}*{Theorem~2.41} again that  there are constants $C, A>0$ depending only on $x, R, D$ and the chosen Riemannian metric on $\Flag(\tad)$ such that if $d_X(gx,V(x,\st(\tau)))\le2(D+R)$, then 
\begin{align}\label{eqn:l2}
C^{-1}\cdot d_X(gx,\partial V(x,\st(\tau)))-A
\leq \log\ef(g^{-1},\tau)
\leq C\cdot d_X(gx,\partial V(x,\st(\tau)))+A,
\end{align}
where $\partial V(x,\st(\tau)))$ is the boundary of $V(x,\st(\tau)))$ in $X$. (Recall from \eqref{eqn:ep} that $\ef(g^{-1},\tau)$  is the expansion factor of $g^{-1}$ at $\tau\in \Flag(\tad)$). Given $\la>1$, the inequality (\ref{eqn:l2}) makes it possible to choose a constant $l_2=l_2(x,R,D,\la)>0$ such that if $d_X(gx, \partial V(x,\st(\tau)))\geq l_2$ and $d_X(gx, V(x, \text{st}(\tau)))\leq 2(D+R)$, then $\ef(g^{-1},\tau)\geq 2\la$. 

Finally, we choose a constant $l_3>0$ as follows: if $y\in V(x,\st_{\Theta''}(\tau))$, the distance of $y$ from $\partial V(x,\st(\tau))$ grows linearly with the distance of $x$ and $y$. Its linear growth rate only depends on $\Theta''$ and hence there is a constant $l_3=l_3(\Theta'',l_2)>0$ such that if $y\in V(x,\st_{\Theta''}(\tau))$ and $d_X(x,y)> l_3$, then $d_X(y, \partial V(x,\st(\tau)))> l_2+2(D+R)$ for any $x,y\in X$ and any $\tau \in \Flag(\tad)$.

Now we are ready to add new generators to $\vS$. Let $\vS_0$ be the set of all elements $g\in\Ga$ such that 
\begin{align}\label{eqn:newgen}
d_\De(x,g x) \in V(0,W_{\tad}\Theta'')\cap\De
\quad\textup{and}\quad
l-2D<d_X(x,g x) < 2l+2D.
\end{align}
Since $\Ga$ acts properly discontinuously on $X$, $\vS_0$ is finite. To each $g\in\vS_0$, we associate its expansion domain 
\[
U_g^\la=\{\tau\in\Flag(\tad) \mid \ef(g^{-1},\tau)>\la \}.
\]
Add all elements $g\in\vS_0$ together with $\la$-expanding domains $U^\la_g$ to $\vS$ and $\U$, respectively. Furthermore, if necessary, add the inverses of $\vS_0$ together with their (possibly empty) $\la$-expanding domains. Then we obtain a symmetric generating finite set $\vS_\eta$ and the collection $\U_\eta$ of all $(\la,g)$-expanding subsets for $g\in \vS_\eta$. The index set $\I_\eta$ for $\vS_\eta$, and the Lipschitz constant $L_\eta \ge L$ are determined in the obvious way. Lastly, by setting
\begin{align}\label{eqn:deeta}
\de_\eta:=\min\left\{\frac{99}{100}\eta,\;\min_{\substack{g\in\vS_0 \\ U^{2\la}_g \neq \emptyset}} d_X(U^{2\la}_g,\partial U^\la_g) \right\}<\eta\le\de,
\end{align}
we obtain a refined expansion datum
\[
\D_\eta=(\I_\eta,\U_\eta,\vS_\eta,\de_\eta,L_\eta,\la)
\succ(\I,\U,\vS,\allowbreak\de,L,\la)=\D
\]
for the $\Ga$-action on $\Flag(\tad)$. Note that by the definition of $\de_\eta$, if $\tau\in U_g^{2\la}$ for some $g\in\vS_0$, then $B_{\de_\eta}(\tau)\subset U^{\la}_g$.

\subsection{Uniform meandering hyperbolicity}\label{sec:ulattice5}

Lastly, we are left with checking the uniform meandering hyperbolicity condition.

Fix $\tau\in\Flag(\tad)$. Let $\al$ and $\be$ be $(\D,\eta)$-codes for $\tau$. In view of Proposition~\ref{prop:umhp}, it suffices to show that the associated rays $c^\al$ and $c^\be$ are $(\D_\eta,\de_\eta;N)$-equivalent for some integer $N\ge1$. But we claim, as a matter of fact, that there exists a $(\D_\eta,\de_\eta)$-code $\ga$ for $\tau$ such that 
\begin{align}\label{eqn:(D,de,0)}
c^\al\approx^0_{(\D_\eta,\de_\eta)} c^\ga\approx^0_{(\D_\eta,\de_\eta)} c^\be.
\end{align}
That is to say, the rays $c^\al$ and $c^\be$ are $(\D_\eta,\de_\eta;0)$-equivalent, and hence $(\D_\eta,\de_\eta;1)$-equivalent.

Let us prove the claim. By Lemma~\ref{lem:CodingMorse}, the sequences $c^\al x$ and $c^\be x$ are $(\Theta,R)$-Morse quasigeodesic rays in $\Ga x$ with endpoint $\tau$ for a uniform datum $(\Theta,R)$ depending only on $(\D,\eta)$. Then both $c^\al x$ and $c^\be x$ are contained in the $R$-neighborhood of a cone $V(x,\st(\tau))$. We fix compact $W_\tad$-convex subsets $\Theta', \Theta''\subset\text{ost}(\tad)\subset \sid$ such that $\Theta \subset \text{int}(\Theta') \subset \Theta' \subset \mathrm{int}(\Theta'')$. Applying Lemma~\ref{lem:interpolation} with the chosen constant $l>0$ in \eqref{eqn:l}, there is a $(\Theta',R)$-Morse quasigeodesic $r$ in $\Ga x$ with $r(0)=x$ and $r(\infty)=\tau$ and an unbounded increasing sequence $t_n\in \R_+$ such that the properties (i), (ii) and (iii) of the lemma hold. According to the proof of Lemma~\ref{lem:interpolation}, we may assume that $r(t_n)\in c^\al(\N_0)x$ if $n$ is odd and $r(t_n)\in c^\be(\N_0)x$ if $n$ is even, and the piecewise geodesic ray with vertices $(\bar r(t_n))_{n\in\N_0}$ is a $\Theta'$-Finsler geodesic ray in $V(x, st(\tau))$.

The properties (ii) and (iii) of Lemma~\ref{lem:interpolation} imply that $t_{n+1}-t_n>l$ for every $n\in \N_0$.
For each $n\in \N_0$, set \[  k_n=\left \lfloor \frac{t_{n+1}-t_n}{l} \right \rfloor -1. \]
Let $(u_k)_{k\in \N_0}$ be an increasing sequence consisting of $\{t_n+jl \mid n\in \N_0, \ j=0,\ldots,k_n \}$. Then it is easy to see that $(t_k)$ is a subsequence of $(u_k)$ and for every $k\in \N_0$, 
\begin{align}\label{eqn:duks}
l\leq u_{k+1}-u_k <2l, \textup{ that is, } l\leq d_X(r(u_{k+1}), r(u_{k}))<2l.
\end{align}
For a sufficiently large $l$, we may assume that the piecewise geodesic ray with vertices $(\bar r(u_k))_{k\in\N_0}$ is a $\Theta''$-Finsler geodesic ray. This is possible since $r$ is a $(\Theta',R)$-Morse quasigeodesic. For the proof, we refer the reader to \cite{KLP17}*{Theorem 5.53}.

Due to the $D$-density of $\Ga x$ in $X$, for each $r(u_k)$ there exists an element $g_k\in \Ga$ such that $g_k x \in B_D(r(u_k))$. 
Here we require that if $r(u_k)\in c^\al (\N_0)x$ (resp. $r(u_k)\in c^\be (\N_0)x$), then $g_k \in c^\al(\N_0)$ (resp. $g_k\in c^\be (\N_0)$) and $g_0=e$.
This gives us that $g_{k}\in c^\al(\N_0)$ if $u_k=t_n$ for an odd number $n$ and  $g_{k}\in c^\be(\N_0)$ if $u_k=t_n$ for a even number $n$. Thus $\{g_k\mid k\in\N_0\}$ has infinitely many elements of both $c^\al(\N_0)$ and $c^\be(\N_0)$.

We will show that there is a $(\D_\eta,\de_\eta)$-code $\ga$ for $\tau$ such that $c^\gamma(k)=g_{k+1}$ for $k\in \N_0$. Then (\ref{eqn:(D,de,0)}) will immediately follow.

\begin{lemma}\label{lemma:gk+1gk}
Let $h_k=g_k^{-1}g_{k+1}$ for $k\in \N_0$. Then $h_k \in \vS_\eta$ and $\ef(h_k^{-1}, h_{k-1}^{-1}\cdots h_0^{-1}\tau)>2\lambda$ for every $k\in \N_0$.
\end{lemma}

\begin{proof}
To show that $h_k =g_k^{-1}g_{k+1} \in \vS_\eta$, we need to verify (\ref{eqn:newgen}).
From the facts that $g_k x \in B_D(r(u_k))$, $\bar r(u_k) \in B_R(r(u_k))$ and (\ref{eqn:duks}), it easily follows that
\begin{align}\label{eqn:cond1forgk}
l-2D\leq d_X(g_k x, g_{k+1}x) =d_X(x, h_kx)<2l+2D.
\end{align}
Moreover, since the geodesic $r(u_k) r(u_{k+1})$ is a $\Theta'$-regular geodesic of length at least $l>l_1$ and $g_k x \in B_D(r(u_k))\subset B_{D+R}(r(u_k))$ for all $k\in \N_0$, the definition of $l_1$ gives
\begin{align*}
d_\De(x, h_kx)=d_\De(g_k x, g_{k+1}x) \in V(0,W_{\tad}\Theta'')\cap\De.
\end{align*}
Thus every $h_k$ is an element of $\vS_\eta$.

Now we will prove the second statement $\ef(h_k^{-1}, h_{k-1}^{-1}\cdots h_0^{-1}\tau)>2\lambda$.
An easy computation gives that $h_{k-1}^{-1}\cdots h_0^{-1}=g_k^{-1}g_0=g_k^{-1}$ and hence
\[ \ef(h_k^{-1}, h_{k-1}^{-1}\cdots h_0^{-1}\tau)=\ef((g_{k}^{-1}g_{k+1})^{-1}, g_k^{-1}\tau). \]
One can estimate the expansion factor $\ef((g_{k}^{-1}g_{k+1})^{-1}, g_k^{-1}\tau)$ through the distances from $g_k^{-1}g_{k+1}x$ to the Weyl cone $V(x, \st(g_k^{-1}\tau))$ and its boundary $\partial V(x, \st(g_k^{-1}\tau))$ as follows. The conditions of $g_k x \in B_D(r(u_k))$ and $\bar r(u_k) \in B_R(r(u_k))$ yields 
\begin{align}\label{eqn:gkuk} 
d_X(g_k x,\bar r(u_k))\leq d_X(g_k x, r(u_k))+d_X(r(u_k), \bar r(u_k)))\leq D+R.
\end{align}
The fact that the piecewise geodesic ray with vertices $\bar r(u_k)$ is a $\Theta''$-Finsler geodesic ray in $V(x,\st(\tau))$ implies $\bar r(u_{k+1}) \in V(\bar r(u_k), \st_{\Theta''}(\tau))\subset V(\bar r(u_{k}), \st(\tau))$. Thus, 
\begin{align*} 
d_X(g_{k+1} x, V(g_k x, \st(\tau))) &  \leq d_X(g_{k+1} x, \bar r(u_{k+1}))+d_X(\bar r(u_{k+1}), V(g_k x, \st(\tau))) \\
& \leq d_X(g_{k+1} x, \bar r(u_{k+1}))+d_H(V(\bar r(u_{k}), \st(\tau)), V(g_kx, \st(\tau))) \\
& \leq d_X(g_{k+1} x, \bar r(u_{k+1}))+d_X(g_kx,\bar r(u_k)) \\
& \leq 2(D+R),
\end{align*}
where $d_H$ denotes Hausdorff distance between subsets of $X$. The third inequality above follows from the fact that the Hausdorff distance between $V(y,\st(\tau))$ and $V(y', \st(\tau))$ is bounded above by $d_X(y,y')$. 

Next we estimate the distance between $g_k^{-1}g_{k+1}x$ and $\partial V(x, \st(g_k^{-1}\tau))$. From the inequalities (\ref{eqn:cond1forgk}) and (\ref{eqn:gkuk}), 
\[ d_X(\bar r(u_{k}),\bar r(u_{k+1}))\geq d_X(g_k x,g_{k+1}x)-2(D+R)\geq l-2(2D+R)>l_3.\]
Since $\bar r(u_{k+1}) \in V( \bar r(u_{k}),\st_{\Theta''}(\tau))$, it follows from the definition of $l_3$ that
\begin{align*}
d_X(\bar r(u_{k+1}), \partial V(\bar r(u_{k}),\st(\tau)))\geq l_2+2(D+R).
\end{align*}

Let $p_{k}$ be the point on $\partial V(g_kx,\st(\tau))$ where $d_X(g_{k+1}x, \partial V(g_kx, \st(\tau)))$ is realized.  If we denote by $c_{y}^{\xi}:[0,\infty)\to X$ the geodesic ray starting at $y$ toward $\xi \in \partial_\infty X$, then $p_k$ is written as $p_k=c_{g_kx}^{\xi_k}(z_k)$ for some $\xi_k\in \partial \st(\tau)$ and $z_k>0$. 
Since the geodesic ray $c_{\bar r(u_k)}^{\xi_k}$ is contained in $V(\bar r(u_k),\partial \st(\tau))=\partial V(\bar r(u_k), \st(\tau))$ and $d_X(c_{g_kx}^{\xi_k}(z),c_{\bar r(u_k)}^{\xi_k}(z)) \leq d_X(g_kx,\bar r (u_k))\leq D+R$ for all $z>0$, 
\begin{align*} 
l_2+2(D+R) & \leq d_X(\bar r(u_{k+1}), \partial V(\bar r(u_{k}), \st(\tau))) \\ 
& \leq d_X(\bar r(u_{k+1}), c_{\bar r(u_k)}^{\xi_k}(z_k)) \\
& \leq d_X(\bar r(u_{k+1}), g_{k+1}x)+d_X(g_{k+1}x, c_{g_kx}^{\xi_k}(z_k)) +d_X(c_{g_kx}^{\xi_k}(z_k),c_{\bar r(u_k)}^{\xi_k}(z_k)) \\
& \leq 2(D+R)+d_X(g_{k+1} x, \partial V(g_k x, \st(\tau))),
\end{align*}
which is equivalent to 
$ d_X(g_k^{-1}g_{k+1}x, \partial V(x, \st(g_k^{-1}\tau)))=d_X(g_{k+1}x, \partial V(g_k x, \st(\tau)))>l_2$.
Recalling that $d_X(g_k^{-1}g_{k+1}x, V(x, \st(g_k^{-1}\tau)))=d_X(g_{k+1}x, V(g_k x, \st(\tau)))<2(D+R)$, the definition of $l_2$ leads us to conclude that $\ef((g_{k}^{-1}g_{k+1})^{-1}, g_k^{-1}\tau)>2\la$  for all integers $k\geq 0$.
This completes the proof of the lemma.
\end{proof}

Now we are ready to finish the proof of Theorem~\ref{thm:ulattice}. By Lemma~\ref{lemma:gk+1gk}, we can define a sequence $\ga : \N_0 \to \I_\eta$ so that for every $k\in \N$, 
$s_{\ga(k)}=h_k \in \vS_\eta$ and
$\ef(s_{\ga(k)}^{-1}, s_{\ga(k-1)}^{-1}\cdots s_{\ga(0)}^{-1}\tau)=\ef(h_k^{-1}, h_{k-1}^{-1}\cdots h_0^{-1}\tau)>2\la$,
that is, $s_{\ga(k-1)}^{-1}\cdots s_{\ga(0)}^{-1}\tau\in U^{2\lambda}_{s_{\ga(k)}}$.
By (\ref{eqn:deeta}), it holds that $$B_{\de_\eta}(s_{\ga(k-1)}^{-1}\cdots s_{\ga(0)}^{-1}\tau) \subset U^{\la}_{s_{\ga(k)}}$$ 
for every $k\in\N_0$ and therefore $\ga$ is a $(\D_\eta,\de_\eta)$-code for $\tau$. Furthermore,  
\[
c^{\ga}(k)=s_{\ga(0)}s_{\ga(1)}\cdots s_{\ga(k)}=h_0h_1\cdots h_k=g_{k+1}
\]
for any integer $k\geq 0$. Since $\{c^\ga(k)=g_{k+1} \mid k\in \N_0\}$ has infinitely many elements of both $c^\al(\N_0)$ and $c^\be(\N_0)$, we finally conclude that $c^\al\approx^0_{(\D_\eta,\de_\eta)} c^\ga\approx^0_{(\D_\eta,\de_\eta)} c^\be$ and thus complete the proof.

\section{Actions of word-hyperbolic groups}\label{sec:wordhyp}

In this section we first prove that expansion implies uniform S-hyperbolicity for certain nice actions of hyperbolic groups (Theorem~\ref{thm:e=>h}), and then explore to which extent S-hyperbolic actions of hyperbolic groups arise from their actions on Gromov boundaries.

\subsection{Expansion implies S-hyperbolicity} 

In view of Corollary~\ref{cor:qg}, any two $(\D,\de;N)$-equivalent rays in a \emph{hyperbolic group} are $(\D,\de;N')$-fellow-traveling for some $N'\ge1$, since two quasigeodesics in a hyperbolic space $X$ which are Hausdorff-close on unbounded subsets define the same point in $\pa X$. Thus, the meandering hyperbolicity condition enables us to define the following map when $\Ga$ is a hyperbolic group:

\begin{definition}[Postal map]\label{def:pi}
Let $\Ga$ be a hyperbolic group and $\rho:\Ga\to\Homeo(M)$ meandering-hyperbolic at $\La$ with a datum $(\D_0\prec\D;N)$. Then we define the \emph{postal map}
\[
\pi:\La\to\pa\Ga,\; x\mapsto \pi(x)
\]
of $\rho$ as follows: the value $\pi(x)$ of $x\in\La$ is the equivalence class in $\pa\Ga$ (in the sense of Section~\ref{sec:coarse_geometry}) of a ray $c^\al\in\Ray_x(\D_0,\de_0)$. The equivalence class is well-defined since any two $(\D_0,\de_0)$-rays for $x\in \La$ are $(\D,\de;N')$-fellow-traveling as mentioned above.
\end{definition}

The map $\pi$ is clearly equivariant. In Theorem~\ref{thm:pi} we will prove that $\pi$ is a continuous surjective map. We note that, as we show later in Corollary~\ref{cor:hyp}, meandering-hyperbolic actions of a \emph{non-elementary} hyperbolic group are in fact (uniformly) S-hyperbolic.

\begin{theorem}\label{thm:e=>h}
Let $\Ga$ be a non-elementary hyperbolic group. Suppose that an action $\rho:\Ga\to\Homeo(M)$ is expanding at $\La$ and there exists an equivariant continuous nowhere constant map $f:\La\to\pa\Ga$. Then $\rho$ is uniformly S-hyperbolic (and thus meandering-hyperbolic) at $\La$ and $f$ is the postal map of $\rho$.
\end{theorem}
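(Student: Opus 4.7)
My plan is to show that every coding ray $c^\al\in\Ray_x(\D,\eta)$ is uniformly Hausdorff-close to the geodesic ray $ef(x)$ in a Cayley graph $(\Ga,d_\vS)$ of $\Ga$, with a Hausdorff constant $D$ depending only on the expansion datum $\D$ and the hyperbolicity constant of $X$. This immediately yields uniform S-hyperbolicity with $N:=2D$ (any two coding rays for the same $x$ are then $(\D,\eta;N)$-fellow-traveling), and simultaneously identifies the coding map $\pi$ with $f$, since $\pi(x)$ is by definition the equivalence class of $c^\al$ in $\pa\Ga$, which by the Hausdorff closeness is $f(x)$.

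The workhorse will be Lemma~\ref{lem:conv}: each coding ray $c^\al$ is an $(A,C)$-quasigeodesic in $(\Ga,d_\vS)$ (Corollary~\ref{cor:qg}) with $(A,C)$ depending only on $L,\la$, and its initial vertex $c^\al_0=s_{\al(0)}\in\vS$ has word length $\le 1$. So it remains to produce a subsequence $(c^\al_{i_j})$ and a subset $S\subset\pa\Ga$ with $\card(S)\ge2$ on which $c^\al_{i_j}\to f(x)$ pointwise. To this end I invoke Theorem~\ref{thm:minimal}(1): there exist a subsequence $(g_j)=(c^\al_{i_j})$ and a point $q\in\La$ such that $\rho(g_j)\to x$ uniformly on $B_{\eta/2}(q)\cap\La$. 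Applying the continuous equivariant map $f$, for every $y\in B_{\eta/2}(q)\cap\La$ one has
\[
g_j\cdot f(y)\;=\;f(\rho(g_j)(y))\;\longrightarrow\; f(x) \quad\text{in } \pa\Ga.
\]
Thus $(g_j)$ converges to $f(x)$ pointwise on the set $S:=f\bigl(B_{\eta/2}(q)\cap\La\bigr)\subset\pa\Ga$.

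The \emph{nowhere constant} hypothesis on $f$ is precisely what guarantees $\card(S)\ge2$: the set $B_{\eta/2}(q)\cap\La$ is a non-empty open subset of $\La$ (since $q\in\La$), so its $f$-image is not a singleton. With this, Lemma~\ref{lem:conv} applies and gives a uniform $D=D(A,C,\de_{\mathrm{hyp}})$ such that $c^\al(\N_0)$ lies within Hausdorff distance $D$ of a geodesic ray $ef(x)$ in $(\Ga,d_\vS)$. Crucially, $D$ does not depend on $\eta\in(0,\de]$ or on $x$, because $(A,C)$ are determined only by the expansion datum and the Cayley graph is fixed.

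Given two coding rays $c^\al,c^\be\in\Ray_x(\D,\eta)$, both are $D$-close to the same ray $ef(x)$, hence they $(\D,\eta;2D)$-fellow-travel, proving uniform S-hyperbolicity with $N:=2D$. The identification $\pi=f$ is then immediate from Definition~\ref{def:pi}. The main conceptual hurdle is the step that turns Theorem~\ref{thm:minimal}(1)'s dynamical convergence in $M$ into pointwise convergence in $\pa\Ga$ on a set of cardinality $\ge2$; this is exactly the place where the nowhere-constancy of $f$ is essential, and without it one could not rule out a degenerate situation where $(g_j)$ drifts off without a well-defined endpoint in $\pa\Ga$.
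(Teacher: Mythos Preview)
Your proof is correct and follows essentially the same approach as the paper's: both use Corollary~\ref{cor:qg} to get uniform quasigeodesic constants, Theorem~\ref{thm:minimal}(1) to extract a subsequence converging uniformly on a ball about some $q\in\La$, the nowhere-constancy and equivariance of $f$ to obtain pointwise convergence to $f(x)$ on a subset $S\subset\pa\Ga$ with $\card(S)\ge2$, and then Lemma~\ref{lem:conv} to conclude uniform Hausdorff-closeness to $ef(x)$, yielding $N=2D$ and $\pi=f$. Your write-up is arguably slightly more careful in writing $B_{\eta/2}(q)\cap\La$ rather than the paper's $B_{\eta/2}(q)\subset\La$, since $f$ is only defined on $\La$.
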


\begin{proof}
Let $\D=(\vS,\de)$ be the expansion datum of $\rho$ (see Definition~\ref{def:expansion}). Recall that $d_\vS$ denotes the word metric on $\Ga$ with respect to $\vS$.

We claim that if $x\in\La$ and $\eta\in(0,\de]$ then, for every $\eta$-code $\al$ for $x$, the $\eta$-ray $c^\al\in\Ray^\eta_\D(x)$ is a uniform quasigeodesic ray in $(\Ga,d_\vS)$ asymptotic to $f(x)$. To see this, we first note that by Corollary~\ref{cor:qg} the ray $c^\al$ is indeed an $(A,C)$-quasigeodesic ray with $A$ and $C$ independent of $x$ and $\eta$.

By Theorem~\ref{thm:minimal}(1), there is a subsequence $(g_j)$ of $(c^\al_k)$ such that $(\rho(g_j))$ converges to $x$ on some ball $B_{\eta/2}(q)\subset\La$. Since $f$ is nowhere constant, the image $S:=f(B_{\eta/2}(q))\subset\pa\Ga$ is not a singleton. By the equivariance of $f$, the subsequence $(g_j)$ converges to $\xi:=f(x)$ pointwise on $S$. Moreover, the initial point $c^\al_0=s_{\al(0)}\in\vS$ is a generator of $\Ga$. Therefore, Lemma~\ref{lem:conv} applies to the ray $c^\al$ and we conclude that the image $c^\al(\N_0)$ is $D$-Hausdorff close to a geodesic ray $\id\xi$ in $(\Ga,d_\vS)$, where the constant $D$ depends only on the hyperbolicity constant of $(\Ga,d_\vS)$ and the quasi-isometry constants $(A,C)$.

Now, suppose that $c^\al,c^\be\in\Ray^\eta_\D(x)$ are rays associated to $\eta$-codes $\al,\be$ for $x\in\La$, respectively. Then the images of $c^\al,c^\be$ are within Hausdorff distance $D$ from a geodesic ray $e\xi$, hence, these images are $2D$-Hausdorff close. Therefore, the rays $2D$-fellow-travel each other. Since $x\in\La$ is arbitrary, we conclude that the action is uniformly S-hyperbolic with data $(\D;N_\eta)=(\de;2D)$; recall Definitions~\ref{def:N-equiv} and \ref{def:shyp}.
\end{proof}

\begin{corollary}
Let $\Ga$ be a non-elementary hyperbolic group. Suppose that the action of $\Ga$ on its Gromov boundary $\pa\Ga$ is expanding with respect to a metric $d_\infty$ compatible with the topology. Then this action is uniformly S-hyperbolic.
\end{corollary}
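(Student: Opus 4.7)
The plan is to reduce this corollary to Theorem~\ref{thm:e=>h} by exhibiting an equivariant continuous nowhere constant map $f : \La \to \pa\Ga$. Since the action is on $\pa\Ga$ itself, the natural candidate is the identity map, provided we can show that the invariant compact subset $\La$ in the expansion datum is necessarily all of $\pa\Ga$.

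First I would unpack the setup: the ambient metric space is $M = (\pa\Ga, d_\infty)$, which is compact (hence proper) because $\Ga$ is finitely generated hyperbolic. The expansion condition requires a non-empty $\Ga$-invariant compact subset $\La \subset M$ no point of which is isolated in $M$. I would then invoke the fact, stated in Section~\ref{sec:coarse_geometry}, that the $\Ga$-action on $\pa\Ga$ is a uniform convergence action which is minimal when $\Ga$ is non-elementary, and that $\pa\Ga$ is perfect. Minimality forces $\La = \pa\Ga$, and perfectness guarantees that no point of $\La$ is isolated in $M$, so both hypotheses of Definition~\ref{def:expansion} are consistent with $\La = M = \pa\Ga$.

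Next I would set $f := \mathrm{id}_{\pa\Ga} : \La \to \pa\Ga$. Equivariance and continuity are immediate. For the nowhere constant property, I would use again that $\pa\Ga$ is perfect: every non-empty open subset of $\pa\Ga$ has cardinality at least two (in fact, is uncountable), so its image under the identity is not a singleton. Thus all the hypotheses of Theorem~\ref{thm:e=>h} are fulfilled.

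Applying Theorem~\ref{thm:e=>h} yields that the $\Ga$-action on $\pa\Ga$ is uniformly S-hyperbolic and, moreover, that the coding map $\pi$ of this action coincides with the identity of $\pa\Ga$. There is no real obstacle here; the only subtlety is recognizing that the non-elementary hyperbolicity of $\Ga$ is exactly what upgrades the abstract invariant set $\La$ to the whole boundary and simultaneously supplies perfectness, which together let the identity map serve as the required nowhere constant equivariant map.
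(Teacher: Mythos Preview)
Your proposal is correct and follows the same approach as the paper, which simply sets $M=\La=\pa\Ga$ and $f=\mathrm{id}:\pa\Ga\to\pa\Ga$ in Theorem~\ref{thm:e=>h}. Your discussion of minimality to force $\La=\pa\Ga$ is a bit more than necessary, since the hypothesis already speaks of the action on $\pa\Ga$ (so $\La=\pa\Ga$ is taken as given), but your justification that the identity is nowhere constant via perfectness of $\pa\Ga$ is exactly the point needed.
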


\begin{proof}
We set $M=\La=\pa\Ga$ and $f=\mathrm{id}:\pa\Ga\to\pa\Ga$ in the preceding theorem.
\end{proof}

Let $\Ga$ be a non-elementary hyperbolic group and $d_a$ a visual metric on $\pa\Ga$ (Definition~\ref{vmetric}). Coornaert \cite{Coo}*{Proposition 3.1, Lemma 6.2} showed that the $\Ga$-action on $(\pa\Ga, d_a)$ is 
expanding. Thus, we have:

\begin{corollary}\label{cor:hyp-hyp}
Let $\Ga$ be a non-elementary hyperbolic group with the Gromov boundary $\pa\Ga$ equipped with a visual metric $d_a$. Then the action of $\Ga$ on $(\pa\Ga,d_a)$ is uniformly S-hyperbolic.
\end{corollary}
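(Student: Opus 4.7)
The plan is to derive this corollary as an immediate consequence of the preceding corollary together with Coornaert's expansion result, which is precisely the statement sketched just before the corollary. First I would verify the hypotheses of the preceding corollary: we need the $\Ga$-action on $(\pa\Ga,d_a)$ to be expanding in the sense of Definition~\ref{def:expansion}. This is exactly what Coornaert establishes in \cite{Coo}*{Proposition~3.1, Lemma~6.2}: relative to a visual metric $d_a$ with parameter $a>1$ (close to $1$), every point of $\pa\Ga$ lies in a neighborhood on which some generator acts with a definite expansion factor $\la>1$, and one then uses compactness of $\pa\Ga$ to extract a finite expansion datum in the sense of Remark~\ref{rem:expansion}(c).

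Once expansion is in hand, I would simply invoke the preceding corollary, setting $M=\La=\pa\Ga$ and taking $f=\mathrm{id}_{\pa\Ga}$ in Theorem~\ref{thm:e=>h}. The map $f$ is trivially $\Ga$-equivariant and continuous, and it is nowhere constant because $\pa\Ga$ is perfect (Theorem~\ref{thm:ne-con-act}(1) applied to the uniform convergence action of the non-elementary hyperbolic group $\Ga$ on $\pa\Ga$, as recalled at the end of Section~\ref{sec:coarse_geometry}), so the image of any non-empty open subset has cardinality at least two. Theorem~\ref{thm:e=>h} then yields that the action is uniformly S-hyperbolic, which is the claim.

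There is essentially no obstacle: the work has already been done in Theorem~\ref{thm:e=>h}, and Coornaert's paper supplies the only missing ingredient, namely expansion. The one place to be slightly careful is the choice of the visual parameter $a$: one must take $a$ sufficiently close to $1$ so that the visual metric $d_a$ exists and the expansion constant provided by Coornaert is strictly greater than $1$, but this is standard and is already folded into the statement of \cite{Coo}.
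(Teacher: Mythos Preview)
Your proposal is correct and follows essentially the same approach as the paper: the paper derives the corollary directly from Coornaert's expansion result \cite{Coo}*{Proposition~3.1, Lemma~6.2} combined with the preceding corollary (itself an application of Theorem~\ref{thm:e=>h} with $M=\La=\pa\Ga$ and $f=\mathrm{id}$). Your additional remark that $f=\mathrm{id}$ is nowhere constant because $\pa\Ga$ is perfect is a helpful elaboration but does not diverge from the paper's argument.
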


\subsection{Meandering-hyperbolic actions of word-hyperbolic groups}

It is natural to ask to what extent the converse of Corollary~\ref{cor:hyp-hyp} is true:

\begin{question}\label{ques:C}
Does every meandering-hyperbolic action $\Ga\to\Homeo(M)$ at $\La$ come from the action of a hyperbolic group on its Gromov boundary?
\end{question}

Assume first that $\card(\La)\ge3$ and the action of $\Ga$ on $\La$ in Question~\ref{ques:C} is a convergence action (see Section~\ref{sec:dynamics} for definition). Then, in view of the expansion condition, it is also a uniform convergence action; see \cite{KLP17}*{Lemma 3.13} or \cite{KL18a}*{Theorem 8.8} for a different argument. If we assume, in addition, that $\La$ is perfect (or that $\La$ is the limit set of the action of $\Ga$ on $\La$, see Theorem~\ref{thm:ne-con-act}), then $\Ga$ is hyperbolic and $\La$ is equivariantly homeomorphic to the Gromov boundary $\pa\Ga$ (Theorem~\ref{thm:bowditch}). To summarize:

\begin{proposition}
Suppose an expanding action $\Ga\to\Homeo(M)$ at $\La$ is a convergence action with limit set $\La$ satisfying $\card(\La)\ge 3$. Then $\Ga$ is hyperbolic and $\La$ is equivariantly homeomorphic to $\pa\Ga$.
\end{proposition}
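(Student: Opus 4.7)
The plan is to verify the hypotheses of Bowditch's characterization (Theorem~\ref{thm:bowditch}) for the action of $\Ga$ on $\La$: namely, that $\La$ is a compact perfect metrizable space of cardinality $\ge 2$, and that $\Ga$ acts on $\La$ as a uniform convergence action. Granting these, Theorem~\ref{thm:bowditch} delivers both conclusions of the proposition at once.

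The topological inputs are immediate: compactness and metrizability of $\La$ are standing assumptions, and since the given action on the ambient space $M$ has $\La$ as its limit set with $\card(\La)\ge 3$, Theorem~\ref{thm:ne-con-act}(1) yields that $\La$ is perfect. The restriction of a convergence action to a closed invariant subset is again a convergence action (proper discontinuity on $T(M)$ restricts to proper discontinuity on $T(\La)$), so we retain the convergence property on $\La$. The substantive step, and the main obstacle, is to upgrade the convergence action on $\La$ to a \emph{uniform} one, that is, to show it is cocompact on $T(\La)$; this is where the expansion hypothesis does real work. The strategy, following \cite{KLP17}*{Lemma~3.13} (see also \cite{KL18a}*{Theorem~8.8}), is to spread any given triple in $T(\La)$ apart into a uniformly fat triple by iterated application of expanding group elements.

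To carry this out, I would fix an expansion datum $\D=(\I,\U,\vS,\de,L,\la)$ and a small constant $\ep_0>0$, and call a triple $\ep_0$-fat if all three pairwise distances are $\ge\ep_0$; the $\ep_0$-fat locus is a compact subset of $T(\La)$. Given $(z_1,z_2,z_3)\in T(\La)$, if some pair $z_i,z_j$ is $\ep_0$-close, choose a $(\D,\de)$-code $(\al,p)$ for $z_i$. By Lemma~\ref{lem:nest}, the map $\rho(c^\al_k)^{-1}$ is $(\la^k/L)$-expanding on the neighborhood $\rho(c^\al_k)[B_\de(p_{k+1})]$ of $z_i$, which has diameter at most $L\de/\la^k$. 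For $k$ chosen so that $d(z_i,z_j)\le L\de/\la^k$, both $z_i$ and $z_j$ lie in this neighborhood and their distance is multiplied by at least $\la^k/L$. Iterating round-robin on the offending pairs and using that pairwise distances in $\La$ are bounded above by $\diam(\La)$, after finitely many steps one obtains $g\in\Ga$ with $g\cdot(z_1,z_2,z_3)$ lying in the $\ep_0$-fat locus. The delicate bookkeeping point is controlling the third point throughout this process so that one pair's separation is not undone by another expansion step; this is handled by the Lipschitz bound of Definition~\ref{def:expansion}(i), which prevents arbitrary collapse on the complement of the expanding ball. With uniformity of the convergence action established, Theorem~\ref{thm:bowditch} applies and yields that $\Ga$ is non-elementary hyperbolic and that $\La$ is equivariantly homeomorphic to $\pa\Ga$.
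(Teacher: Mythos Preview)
Your proposal is correct and follows essentially the same approach as the paper: both arguments use Theorem~\ref{thm:ne-con-act}(1) to obtain perfectness of $\La$ from the limit-set assumption, invoke \cite{KLP17}*{Lemma~3.13} (or \cite{KL18a}*{Theorem~8.8}) to upgrade the expanding convergence action to a uniform one, and then apply Bowditch's Theorem~\ref{thm:bowditch}. Your sketch of the fat-triple argument goes beyond what the paper spells out (the paper simply defers to the cited references), but is in the right spirit.
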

\noindent Note that we do not even need to assume faithfulness of the action of $\Ga$ on $\La$ since, by the convergence action assumption, such an action necessarily has finite kernel.

As another application of the formalism of convergence group actions we obtain:

\begin{proposition}
Suppose that $\Ga$ is hyperbolic, $d_\infty$ is a compatible metric on the Gromov boundary $\pa\Ga$, and the $\Ga$-action on $(\pa\Ga,d_\infty)$ is expanding. Define the subset $\vS_0\subset\vS$ of the finite generating set $\vS$, consisting of elements $s_\al$ with non-empty expansion subsets $U_\al\subset\La$. Then $\vS_0$ generates a finite index subgroup $\Ga_0<\Ga$.
\end{proposition}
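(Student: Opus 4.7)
The plan is to prove that $\Ga_0:=\langle\vS_0\rangle$ is cobounded in $(\Ga,d_\vS)$: once $\Ga=\Ga_0\cdot B_R$ for some finite radius $R$, the finiteness of the ball $B_R\subset\Ga$ (which uses that $\vS$ is finite) bounds $[\Ga:\Ga_0]$ by $|B_R|$.

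First I would promote the given expanding action to a uniformly S-hyperbolic one by applying Theorem~\ref{thm:e=>h} to the equivariant map $f=\mathrm{id}_{\pa\Ga}$, which is continuous and nowhere constant since $\pa\Ga$ is perfect. This furnishes a constant $D$ such that every coding ray for $\xi\in\pa\Ga$ is $D$-Hausdorff close in $(\Ga,d_\vS)$ to a geodesic ray from $\id$ to $\xi$ in the Cayley graph $X=\mathrm{Cay}(\Ga,\vS)$. The second, purely combinatorial observation is that for every \emph{special} $\de$-code $\al$ for $\xi\in\pa\Ga$ (one exists for every $\xi$ since $\de\le\de_\U$), we have $U_{\al(0)}\ne\emptyset$ by speciality and $U_{\al(i)}\ne\emptyset$ for $i\ge 1$ by Remark~\ref{rem:code}(a); hence every $s_{\al(i)}$ lies in $\vS_0$, so the ray $c^\al_i=s_{\al(0)}\cdots s_{\al(i)}$ lies entirely in $\Ga_0$.

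The crucial third step is to place every $g\in\Ga$ within bounded $d_\vS$-distance of such a ray in $\Ga_0$. I would fix once and for all two distinct boundary points $\eta_1,\eta_2\in\pa\Ga$ (available since $\Ga$ is non-elementary) and set $C_0:=(\eta_1\cdot\eta_2)_\id<\infty$. The hyperbolicity inequality
\[
(\eta_1\cdot\eta_2)_\id\ge\min\{(\eta_1\cdot g^{-1})_\id,\,(g^{-1}\cdot\eta_2)_\id\}-\de
\]
forces at least one $(g^{-1}\cdot\eta_i)_\id$ to be $\le C_0+\de$; the $\Ga$-invariance of the Gromov product yields $(\id\cdot g\eta_i)_g=(g^{-1}\cdot\eta_i)_\id\le C_0+\de$, and since this Gromov product approximates the distance from $g$ to a geodesic ray from $\id$ to $g\eta_i$ up to an additive $O(\de)$, such a geodesic ray passes within $C_0+O(\de)$ of $g$. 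Combined with the previous two steps applied to $\xi=g\eta_i$, there is some $c^\al_n\in\Ga_0$ with $d_\vS(g,c^\al_n)\le C_0+D+O(\de)=:R$, yielding the desired coboundedness.

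The main obstacle is Step~3: exhibiting, uniformly in $g$, a ``good direction'' $\xi\in\pa\Ga$ whose geodesic ray from $\id$ passes close to $g$. This is morally equivalent to the visibility-type statement that geodesic segments in the proper hyperbolic space $X$ extend to infinite geodesic rays, but the Gromov-product argument sketched above supplies an explicit uniform constant and sidesteps any appeal to such extension results.
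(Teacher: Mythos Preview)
Your argument is correct and takes a genuinely different route from the paper. The paper's proof runs through convergence action theory: since the nonempty $U_\al$'s already cover $\pa\Ga$, the subgroup $\Ga_0$ itself acts on $\pa\Ga$ as an expanding convergence group, hence (by the cited result that expanding convergence actions are uniform) cocompactly on the triple space $T(\pa\Ga)$; proper discontinuity of the ambient $\Ga$-action on $T(\pa\Ga)$ then forces $[\Ga:\Ga_0]<\infty$. This is a three-line argument, but it imports the external machinery of uniform convergence actions.

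Your approach instead stays inside the paper's own framework: you use Theorem~\ref{thm:e=>h} (via the identity map $\pa\Ga\to\pa\Ga$) to pin each special coding ray $D$-close to the geodesic $[\id,\xi]$, observe from Remark~\ref{rem:code}(a) that special coding rays live entirely in $\Ga_0$, and then produce for each $g\in\Ga$ a direction $\xi=g\eta_i$ whose ray passes uniformly close to $g$ via the four-point inequality applied to two fixed antipodal points $\eta_1,\eta_2$. This yields an explicit radius $R$ with $\Ga=\Ga_0\cdot B_R(\id)$, hence an explicit bound on the index. The trade-off: the paper's proof is shorter and conceptual, while yours is more self-contained (no appeal to convergence group theory beyond what the paper already develops) and quantitative. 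Both arguments tacitly assume $\Ga$ is non-elementary (you for the perfectness of $\pa\Ga$ and existence of $\eta_1\ne\eta_2$; the paper for $T(\pa\Ga)\ne\emptyset$); the elementary case is trivial.
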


\begin{proof} 
The action of $\Ga_0$ on $\pa\Ga$ is still expanding and convergence, see Remark \ref{rem:consubgroups}. Therefore, as noted above, the action of $\Ga_0$ on $T(\pa\Ga)$ is also cocompact. Since the action of $\Ga$ on $T(\pa\Ga)$ is properly discontinuous, it follows that $\Ga_0$ has finite index in $\Ga$.
\end{proof}

\medskip 
Next, assuming hyperbolicity of $\Ga$ in Question~\ref{ques:C}, we can relate $\La$ and $\pa\Ga$. Recall the relevant definitions from the beginning of Section~\ref{sec:prelim}.

\begin{theorem}\label{thm:pi}
Let $\Ga$ be a non-elementary hyperbolic group. If $\Ga\to\Homeo(M)$ is a meandering-hyperbolic action at $\La$, then the following hold.
\begin{enumerate}[label=\textup{(\arabic*)},leftmargin=*,nosep]
\item
The postal map $\pi:\La\to\pa\Ga$ is an equivariant continuous surjective map.
\item
For each minimal non-empty closed $\Ga$-invariant subset $\La_\mu\subset \La$, the restriction $\pi_\mu:\La_\mu\to\pa\Ga$ of $\pi$ to $\La_\mu$ is a surjective quasi-open map.
\item
Every $\La_\mu$ as above is perfect.
\end{enumerate} 
\end{theorem}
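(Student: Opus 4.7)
The plan is to verify in order: continuity of $\pi$, then surjectivity, then quasi-openness of each restriction $\pi_\mu$, and finally perfectness of each $\La_\mu$; equivariance is immediate from Definition~\ref{def:pi}. For continuity at a point $x\in\La$, I would pick a $(\D,\de)$-code $(\al,p)$ for $x$ arising from a $(\D,\de_0)$-code, and recycle the code-matching argument already established in Section~\ref{sec:cont}: for every $k\in\N$ there exists $\de'>0$ such that each $y\in\La$ with $d(x,y)<\de'$ admits a $(\D,\de)$-code $(\be,q)$ with $\be(i)=\al(i)$ for $0\le i\le k$. The associated rays then satisfy $c^\be_i=c^\al_i$ for all such $i$, and by Corollary~\ref{cor:qg} both are uniform $(A,C)$-quasigeodesic rays in the Cayley graph of $\Ga$ representing $\pi(x)$ and $\pi(y)$ respectively. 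By the Morse lemma the geodesic rays $\id\pi(x)$ and $\id\pi(y)$ both pass within a uniformly bounded distance of the common point $c^\al_k=c^\be_k$, whose word length is at least $k/A-C$; hyperbolicity of the Cayley graph then forces the Gromov product $(\pi(x)\cdot\pi(y))_\id$ to grow linearly in $k$, so $\pi(y)\to\pi(x)$ in the visual topology.

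Surjectivity in (1) and (2) is then free: for any non-empty closed $\Ga$-invariant $K\subset\La$, the image $\pi(K)\subset\pa\Ga$ is compact, non-empty, and $\Ga$-invariant, and the $\Ga$-action on $\pa\Ga$ is minimal since $\Ga$ is non-elementary hyperbolic, forcing $\pi(K)=\pa\Ga$. Applying this with $K=\La$ and $K=\La_\mu$ settles both surjectivity claims.

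For quasi-openness of $\pi_\mu$, let $W\subset\La_\mu$ be a non-empty open subset, pick $x\in W$, and choose an open neighborhood $V$ of $x$ in $\La_\mu$ whose closure $\ov V$ (in $\La_\mu$) lies in $W$. By minimality of $\Ga$ on $\La_\mu$ and compactness, some finite collection $g_1\ov V,\ldots,g_n\ov V$ covers $\La_\mu$, so
\[
\pa\Ga=\pi_\mu(\La_\mu)=\bigcup_{i=1}^n g_i\,\pi_\mu(\ov V)
\]
is a finite union of closed subsets of $\pa\Ga$. Since a finite union of nowhere dense closed sets is nowhere dense, at least one $g_i\pi_\mu(\ov V)$ must have non-empty interior, and translating by $g_i^{-1}$ shows that $\pi_\mu(\ov V)\subset\pi_\mu(W)$ does as well.

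Finally, for perfectness in (3), the set of isolated points of $\La_\mu$ is open and $\Ga$-invariant, so by minimality it is either empty or equal to $\La_\mu$; the latter would make $\La_\mu$ discrete and compact, hence finite, contradicting the existence of the surjection $\pi_\mu:\La_\mu\to\pa\Ga$ onto the uncountable set $\pa\Ga$. Hence $\La_\mu$ has no isolated points and (being of cardinality $\ge 2$) is perfect. The only step with any real content is continuity: everything there reduces to importing the code-matching machinery from Section~\ref{sec:cont} and then translating initial-segment agreement of two uniform quasigeodesic rays into closeness of their endpoints in $\pa\Ga$, which is precisely where the hyperbolicity of $\Ga$ and the Morse lemma enter in an essential way.
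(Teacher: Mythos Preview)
Your proposal is correct and follows essentially the same approach as the paper's own proof. You fill in the continuity step with slightly more explicit detail (invoking the Morse lemma and Gromov products) where the paper tersely asserts $\pi(y)\in V_n(\pi(x))$; the surjectivity, quasi-openness, and perfectness arguments are virtually identical to the paper's, differing only cosmetically (e.g., you use uncountability of $\pa\Ga$ in part (3) where the paper uses minimality of the $\Ga$-action on $\pa\Ga$).
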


\begin{proof} 
(1) We already noted the map $\pi$ is equivariant. Continuity of $\pi:\La\to\pa\Ga$ can be seen as in Section~\ref{sec:cont}. Namely, if $x,y\in\La$ are close, there exist $\de$-codes $\al$ and $\be$ for $x$ and $y$, respectively, such that $\al(k)=\be(k)$ for all $0\le k\le n$, where $n\in\N$ is sufficiently large. Then
\[
c^\al_k=s_{\al(0)}s_{\al(1)}\cdots s_{\al(k)}
=s_{\be(0)}s_{\be(1)}\cdots s_{\be(k)}=c^\be_k
\]
for all $0\le k\le n$. This means $\pi(y)\in V_n(\pi(x))$ for a sufficiently large $n$ (see Section~\ref{sec:coarse_geometry}), hence $\pi(x)$ and $\pi(y)$ are close.

Since $\La$ is compact, the image $\pi(\La)$ is closed and $\Ga$-invariant. By the minimality of the action of $\Ga$ on $\pa\Ga$, we have $\pi(\La)=\pa\Ga$.

(2) Surjectivity of $\pi_\mu$ follows from the minimality of the action of $\Ga$ on $\pa\Ga$ as in part (1). We now prove that each $\pi_\mu$ is quasi-open. Since $\La_\mu$ is compact, it is locally compact, hence it suffices to prove that for every compact subset $K\subset\La_\mu$ with non-empty interior, the image $\pi(K)\subset\pa\Ga$ also has non-empty interior.

In view of the minimality of the $\Ga$-action on $\La_\mu$ and compactness of $\La_\mu$, there exists a finite subset $\{g_1,\ldots,g_n\}\subset \Ga$ such that
\[
\rho(g_1)(\intr K)\cup \cdots \cup \rho(g_n)(\intr K)= \La_\mu. 
\]
By the equivariance of $\pi$ and surjectivity of $\pi_\mu: \La_\mu\to\pa\Ga$, we also have
\[
g_1(\pi(K))\cup \cdots \cup g_n(\pi(K))= \pa\Ga. 
\] 
Since a finite collection (even a countable collection) of nowhere dense subsets cannot cover $\pa\Ga$, it follows that $\pi(K)$ has non-empty interior.

(3) Suppose that $\La_\mu$ has an isolated point $z$. Since the action of $\Ga$ on $\La_\mu$ is minimal, the compact subset $\La_\mu\subset \La$ consists entirely of isolated points, i.e. is finite. Therefore, $\pi(\La_\mu)\subset \pa\Ga$ is a finite non-empty $\Ga$-invariant subset. This contradicts the minimality of the action of $\Ga$ on $\pa\Ga$.
\end{proof}

\begin{remark} 
For some minimal S-hyperbolic actions of hyperbolic groups $\Ga\to\Homeo(\La)$, the map $\pi$ is not open; see Example~\ref{ex:blowup}.
\end{remark}

\begin{corollary}\label{cor:pi}
Let $\Ga$ be a non-elementary hyperbolic group. If $\Ga\to\Homeo(M)$ is a meandering-hyperbolic action at $\La$, then:
\begin{enumerate}[label=\textup{(\arabic*)},leftmargin=*,nosep]
\item
$\Ga$ acts on $\La$ with finite kernel $K$.
\item
If $(g_i)$ is a sequence in $\Ga$ converging to the identity on $\La$ then the projection of this sequence to $\Ga/K$ is eventually equal to $\id\in\Ga/K$.
\end{enumerate}
\end{corollary}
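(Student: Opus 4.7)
The plan is to reduce both assertions to the dynamics of $\Ga$ on its Gromov boundary by means of the equivariant continuous surjective coding map $\pi:\La\to\pa\Ga$ furnished by Theorem~\ref{thm:pi}, exploiting that the induced action of $\Ga/K_\pa$ on $\pa\Ga$ is a faithful convergence action, where $K_\pa\triangleleft\Ga$ denotes the unique maximal finite normal subgroup of $\Ga$. For (1), equivariance and surjectivity of $\pi$ give $K\le K_\pa$ at once: any $g\in K$ fixes $\pi(x)=\pi(g\cdot x)=g\cdot\pi(x)$ for every $x\in\La$, so $g$ acts trivially on $\pi(\La)=\pa\Ga$. Since $K_\pa$ is finite, so is $K$.

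For (2), assume $(g_i)$ converges to the identity pointwise on $\La$; then $g_i\cdot\pi(x)=\pi(g_i\cdot x)\to\pi(x)$ for every $x\in\La$, so by surjectivity of $\pi$ the sequence $(g_i)$ also converges to $\id$ pointwise on $\pa\Ga$. The first step is to show that $\bar g_i=\bar\id$ in $\Ga/K_\pa$ for all sufficiently large $i$. Suppose not; passing to a subsequence we may assume $\bar g_i\neq\bar\id$ for every $i$, and I would split into two cases. If $\{\bar g_i\}$ is finite, then some non-identity element $\bar h$ appears infinitely often, and taking the pointwise limit along the corresponding indices forces $\bar h\cdot x=x$ on all of $\pa\Ga$, contradicting faithfulness of the $\Ga/K_\pa$-action on $\pa\Ga$. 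If $\{\bar g_i\}$ is infinite, then by the convergence action characterization recalled in Section~\ref{sec:dynamics} a further subsequence converges to some $z_+\in\pa\Ga$ uniformly on compacts of $\pa\Ga\setminus\{z_-\}$; since $\pa\Ga$ is perfect and hence uncountable, I can pick $x\in\pa\Ga\setminus\{z_-,z_+\}$, and then $g_{i_j}\cdot x\to z_+\neq x$ contradicts pointwise convergence to the identity.

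This establishes $g_i\in K_\pa$ for all sufficiently large $i$. Since $K_\pa$ is finite, the tail of $(g_i)$ takes only finitely many values; any element $h$ attained infinitely often must satisfy $h\cdot x=x$ for every $x\in\La$ (extract the constant subsequence and take pointwise limits), hence $h\in K$. Therefore $g_i\in K$ for all but finitely many $i$, which is exactly the desired conclusion. I do not anticipate any serious obstacle: the only slightly delicate step is the convergence-action dichotomy above, but it is a standard application of the characterization quoted in Section~\ref{sec:dynamics}.
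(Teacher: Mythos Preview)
Your proposal is correct and follows exactly the route the paper indicates: transfer to $\pa\Ga$ via the equivariant surjective coding map of Theorem~\ref{thm:pi}(1) and then invoke the convergence property of the $\Ga$-action on $\pa\Ga$. The paper's own proof is a one-line reference to precisely these two ingredients, so you have simply (and correctly) unpacked the details, including the small extra step of passing from $K_\pa$ down to $K$ in part~(2).
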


\begin{proof}
Both statements are immediate consequences of Theorem~\ref{thm:pi}(1) and the convergence property for the action of a $\Ga$ on $\pa\Ga$; see Section~\ref{sec:coarse_geometry}.
\end{proof}

As another immediate corollary of the theorem and Theorem~\ref{thm:e=>h} we obtain:

\begin{corollary}\label{cor:hyp}
Let $\Ga$ be a non-elementary hyperbolic group. Then every   meandering-hyperbolic action of $\Ga$ is in fact uniformly S-hyperbolic.
\end{corollary}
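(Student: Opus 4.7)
The plan is to verify the hypotheses of Theorem~\ref{thm:e=>h} by taking as $f$ the coding map $\pi\colon\La\to\pa\Ga$ produced in Theorem~\ref{thm:pi}. Theorem~\ref{thm:pi}(1) already ensures $\pi$ is equivariant, continuous and surjective; thus the entire content of the corollary reduces to showing that $\pi$ is nowhere constant on $\La$. Once this is in hand, Theorem~\ref{thm:e=>h} yields at once that $\rho$ is uniformly S-hyperbolic with $\pi$ as its coding map in the sense of Definition~\ref{def:pi}.

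To establish nowhere-constancy I would combine Theorem~\ref{thm:pi}(2) with a dynamical argument. Suppose for contradiction that $\pi\equiv\xi$ on a non-empty open $V\subset\La$. Set $W:=\intr\pi^{-1}(\xi)\supset V$; by equivariance, distinct cosets of $F:=\mathrm{Stab}_\Ga(\xi)$ give pairwise disjoint open translates $gW$ with $\pi|_{gW}\equiv g\xi$. If the $\Ga$-saturation $\Ga W$ were all of $\La$, compactness of $\La$ would yield a finite subcover and force $\pi(\La)$ to be finite, contradicting surjectivity of $\pi$ onto the uncountable set $\pa\Ga$. Hence $\La\setminus\Ga W$ is a non-empty proper closed $\Ga$-invariant subset, and by Zorn's lemma contains a minimal subset $\La_\mu$ with $\La_\mu\cap W=\emptyset$.

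The main obstacle is to exploit this last configuration to reach a contradiction. By Theorem~\ref{thm:pi}(2) the restricted map $\pi_\mu$ surjects $\La_\mu$ onto $\pa\Ga$, so there exists $z\in\La_\mu$ with $\pi(z)=\xi$; since $z\notin W$, we have $z\in\partial\pi^{-1}(\xi)$. Applying Theorem~\ref{thm:minimal}(1) to $z$ produces a coding-ray subsequence $(g_j)$ and a point $q\in\overline{\Ga z}=\La_\mu$ such that $\rho(g_j)$ converges to $z$ uniformly on $B_{\eta/2}(q)$. Since $\pi_\mu$ is quasi-open and $\pa\Ga$ is perfect, $\pi_\mu(B_{\eta/2}(q)\cap\La_\mu)$ contains distinct points $\xi_1\neq\xi_2$, and by equivariance each open set $\rho(g_j)(B_{\eta/2}(q))$ — which lies in an arbitrarily small neighborhood of $z$ for large $j$ — carries the distinct $\pi$-values $g_j\xi_1$ and $g_j\xi_2$. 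Combined with Theorem~\ref{thm:minimal}(2), which applied to any $x\in V$ forces $F$ to be infinite and hence (since $\Ga$ is non-elementary hyperbolic) virtually cyclic with a distinguished pair of boundary fixed points, I expect to analyse how the $F$-coset structure of $(g_j)$ interacts with the picture near $z$ to yield the required contradiction, thereby establishing nowhere-constancy of $\pi$.

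Once nowhere-constancy of $\pi$ is established, Theorem~\ref{thm:e=>h} applies directly and yields uniform S-hyperbolicity of $\rho$, with $\pi$ coinciding with the coding map furnished by Definition~\ref{def:pi}. The hardest step will be the last part of Paragraph~3, namely ruling out the exotic configuration in which an open level set of $\pi$ avoids every minimal $\Ga$-invariant subset of $\La$; here the non-elementary hypothesis on $\Ga$ and the virtually cyclic structure of point stabilizers in $\pa\Ga$ should play the decisive role.
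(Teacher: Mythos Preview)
Your overall strategy is exactly the paper's: take the coding map $\pi$ furnished by Theorem~\ref{thm:pi} as the map $f$ in Theorem~\ref{thm:e=>h}. The paper simply records this as ``an immediate corollary of the theorem and Theorem~\ref{thm:e=>h}'' and gives no further argument; in particular it does not spell out why $\pi$ is nowhere constant. So on the level of approach you and the paper agree, and you are in fact being more careful than the paper by isolating nowhere-constancy as the point that needs checking.

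That said, your proposal has a genuine gap, and you name it yourself: the last paragraph is not a proof but a plan. Concretely, after you produce a minimal set $\La_\mu$ disjoint from $W=\intr\pi^{-1}(\xi)$ and a point $z\in\La_\mu\cap\partial\pi^{-1}(\xi)$, the remaining argument (``analyse how the $F$-coset structure of $(g_j)$ interacts with the picture near $z$'') is not carried out. The difficulty is real: knowing that the stabiliser $F=\mathrm{Stab}_\Ga(\xi)$ is virtually cyclic and that a subsequence $g_j$ of a coding ray eventually lies in a single coset $Fg_J$ is \emph{consistent} with $g_j\to\xi$ along that coset, so no contradiction falls out automatically. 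The quasi-openness of $\pi_\mu$ you invoke only tells you that $\pi$ is nowhere constant on $\La_\mu$; it says nothing about neighbourhoods in $\La$ of points not lying in any minimal set, and this is precisely the configuration you must rule out.

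If you want to finish along these lines, one workable route is to bypass the minimal-set detour and argue directly with expansion: if $\pi$ is constant on an open neighbourhood of some $x$, then for any $(\D_0,\de_0)$-code $(\al,p)$ for $x$ the nesting in Lemma~\ref{lem:nest} forces $\pi$ to be constant on the \emph{uniform} balls $B_{\de_0}(p_{i+1})\cap\La$ for all large $i$; a closure argument then shows that the set of points with this ``fat'' local-constancy is closed and $\Ga$-invariant, hence contains a minimal set $\La_\mu$, on which $\pi_\mu$ would then be locally constant---contradicting the quasi-openness from Theorem~\ref{thm:pi}(2). This fills the gap without the unfinished coset analysis.
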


These are positive results regarding Question~\ref{ques:C}. In Sections~\ref{sec:ulattice} and \ref{sec:examples}, however, we present several examples which show that in general the question has negative answer.

\subsection{S-hyperbolicity and stability for Anosov subgroups}

Our goal in this section is to characterize Anosov subgroups in terms of the expansion condition on suitable subsets of partial flag manifolds (Theorem~\ref{thm:equiv}) and show that the corresponding actions are S-hyperbolic. As an application, we give an alternative proof of the stability of Anosov subgroups (Corollary~\ref{cor:stability}). 

For the sake of simplicity, we shall restrict our attention to the case of non-elementary hyperbolic groups and make use of Corollary~\ref{cor:qg} and Theorems~\ref{thm:e=>h} and \ref{thm:pi}. Then Lemma~\ref{lem:c} below says that the condition (d) in Definition~\ref{def:anosov} is also equivalent to the expansion condition (Definition~\ref{def:expansion}) at the image of the boundary embedding. Consequently, we obtain the following characterization of Anosov subgroups:

\begin{theorem}\label{thm:equiv}
For a non-elementary hyperbolic subgroup $\Ga<G$ the following are equivalent.
\begin{enumerate}[label=\textup{(\arabic*)},nosep,leftmargin=*]
\item
$\Ga$ is non-uniformly $\tad$-Anosov with asymptotic embedding $\psi:\pa\Ga\to\La_\Ga(\tad)$.
\item
$\Ga$ is $\tad$-boundary embedded with a boundary embedding $\varphi:\pa\Ga\to\Flag(\tad)$ and the $\Ga$-action on $\Flag(\tad)$ is expanding at $\varphi(\pa\Ga)$.
\item
There exists a closed $\Ga$-invariant antipodal subset $\La\subset\Flag(\tad)$ such that the action $\Ga\to \Homeo(\Flag(\tad))$ is S-hyperbolic at $\La$ with injective postal map $\pi:\La\to\pa\Ga$.
\end{enumerate}
Moreover, the maps $\psi$ and $\varphi$ in \textup{(1)} and \textup{(2)} coincide, and the map $\pi$ in \textup{(3)} equals $\psi^{-1}$.
\end{theorem}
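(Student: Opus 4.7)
The plan is to prove the cycle $(1)\Rightarrow(2)\Rightarrow(3)\Rightarrow(1)$, with Lemma~\ref{lem:c} (announced in the surrounding text) serving as the bridge between the pointwise expansion of the Anosov condition and the neighborhood expansion of Definition~\ref{def:expansion}. For $(1)\Rightarrow(2)$, I fix any $\la>1$. Condition (d) of Definition~\ref{def:anosov} supplies, for every $\xi\in\pa\Ga$, an element $g\in\Ga$ with $\ep(g^{-1},\al(\xi))>\la$; Lemma~\ref{lem:c} upgrades this pointwise expansion at the compact set $\al(\pa\Ga)$ to an open cover by $(\la,g^{-1})$-expansion subsets. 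Extracting a finite subcover and padding the chosen collection of elements to a symmetric generating set as in Remark~\ref{rem:expansion}(c) produces an expansion datum, so the $\Ga$-action on $\Flag(\tad)$ is expanding at $\al(\pa\Ga)$, and taking $\be:=\al$ gives (2).

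For $(2)\Rightarrow(3)$, set $\La:=\be(\pa\Ga)$. Continuity, equivariance, and antipodality (hence injectivity) of $\be$, together with compactness of $\pa\Ga$, imply that $\La$ is a closed $\Ga$-invariant antipodal subset of $\Flag(\tad)$ and that $\be:\pa\Ga\to\La$ is an equivariant homeomorphism. Its inverse $f:=\be^{-1}:\La\to\pa\Ga$ is therefore an equivariant continuous nowhere constant map, so Theorem~\ref{thm:e=>h} applies to the expanding action $\Ga\to\Homeo(\Flag(\tad);\La)$, yielding uniform S-hyperbolicity and the identification $\pi=\be^{-1}$. Injectivity of $\pi$ is immediate from injectivity of $\be$. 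The equality $\be=\al$ is the uniqueness clause of Lemma~\ref{lem:c}: any boundary embedding supporting the expansion condition must coincide with the asymptotic embedding.

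For $(3)\Rightarrow(1)$, Theorem~\ref{thm:pi}(1) makes $\pi:\La\to\pa\Ga$ continuous and surjective; combined with injectivity and compactness of $\La$, this forces $\pi$ to be an equivariant homeomorphism. Set $\al:=\pi^{-1}$, an antipodal continuous equivariant map witnessing that $\Ga$ is $\tad$-boundary embedded. To verify (d), fix $\xi\in\pa\Ga$, $x:=\al(\xi)$, and a geodesic ray $r:\N_0\to\Ga$ with $r(0)=\id$ and $r(\infty)=\xi$. Choose a $(\D,\de)$-code for $x$ with associated coding ray $c:\N_0\to\Ga$; Lemma~\ref{lem:nest} gives $\ep(c(n)^{-1},x)\ge\la^n/L$. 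By Corollary~\ref{cor:qg}, $c$ is a uniform quasigeodesic in $(\Ga,d_\vS)$ and, by definition of $\pi$, it is asymptotic to $\xi$. S-hyperbolicity of the action combined with the Morse property in the hyperbolic group $\Ga$ forces $r$ and $c$ to stay within some bounded Hausdorff distance $D'$; hence for each $n$ there is $m(n)$ with $|c(m(n))^{-1}r(n)|_\vS\le D'$ and $m(n)\to\infty$. Writing $r(n)=c(m(n))h_n$ with $h_n$ in the finite $d_\vS$-ball of radius $D'$ about $\id$, the chain rule yields
\[
\ep(r(n)^{-1},x)\;\ge\;\ep(h_n^{-1},c(m(n))^{-1}(x))\cdot\ep(c(m(n))^{-1},x).
\]
The first factor is bounded below by a positive constant uniformly in $n$ (finitely many smooth diffeomorphisms of the compact manifold $\Flag(\tad)$), while the second factor tends to $+\infty$, establishing (d).

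\textbf{Main obstacle.} The crux is the final implication $(3)\Rightarrow(1)$: Lemma~\ref{lem:nest} produces the exponential expansion estimate only along coding rays, which are merely quasigeodesic in the Cayley graph. Transporting the estimate to an arbitrary geodesic ray asymptotic to $\xi$ requires simultaneously invoking S-hyperbolicity of the action (to align any two coding rays for the same point), Morse stability in the hyperbolic group $\Ga$ (to align a coding ray with a true geodesic ray), and the multiplicative behavior of expansion factors under composition (to absorb the bounded discrepancy as a harmless multiplicative constant).
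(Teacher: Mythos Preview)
Your proof is correct and follows essentially the same approach as the paper: the paper proves $(2)\Leftrightarrow(3)$ directly and delegates $(1)\Leftrightarrow(2)$ to Lemma~\ref{lem:c}, whereas you prove the cycle $(1)\Rightarrow(2)\Rightarrow(3)\Rightarrow(1)$, inlining the content of Lemma~\ref{lem:c}(2) into your $(3)\Rightarrow(1)$ step. The key ingredients---Theorem~\ref{thm:e=>h} for $(2)\Rightarrow(3)$, Theorem~\ref{thm:pi} together with the Morse lemma for bijectivity of $\pi$ and fellow-traveling of $r$ and $c$, and Lemma~\ref{lem:nest} for the expansion estimate along coding rays---match the paper's exactly.
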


\begin{proof}
(2 $\Rightarrow$ 3)\; Let $\La=\varphi(\pa\Ga)\subset\Flag(\tad)$. It is a closed $\Ga$-invariant antipodal subset such that the action $\Ga\to\Homeo(\Flag(\tad);\La)$ is expanding and the equivariant homeomorphism $\varphi^{-1}:\La\to\pa\Ga$ is nowhere constant. By Theorem~\ref{thm:e=>h}, this action is S-hyperbolic with postal map $\varphi^{-1}$.

(3 $\Rightarrow$ 2)\; By Theorem~\ref{thm:pi}(1) the postal map $\pi$ is equivariant, continuous and surjective. Since $\pi$ is assumed to be injective and $\La$ is compact, $\pi$ is in fact a homeomorphism. Since $\La$ is antipodal, the inverse $\pi^{-1}:\pa\Ga\to\La\subset\Flag(\tad)$ is a boundary embedding. The $\Ga$-action on $\La$ is S-hyperbolic, in particular, expanding.

(1~$\Leftrightarrow$~2) This equivalence reduces to the lemma below.
\end{proof}

\begin{lemma}\label{lem:c}
Suppose $\Ga<G$ is $\tad$-boundary embedded with a boundary embedding $\varphi:\pa\Ga\to\Flag(\tad)$.
\begin{enumerate}[label=\textup{(\arabic*)},nosep,leftmargin=*]
\item
If $\Ga$ is non-uniformly $\tad$-Anosov with the asymptotic embedding $\varphi$, then the $\Ga$-action on $\Flag(\tad)$ is expanding at $\varphi(\pa\Ga)$.
\item
If $\Ga$ is non-elementary and the $\Ga$-action on $\Flag(\tad)$ is expanding at $\varphi(\pa\Ga)$, then it is non-uniformly $\tad$-Anosov and $\varphi$ is the asymptotic embedding for $\Ga$.
\end{enumerate}
\end{lemma}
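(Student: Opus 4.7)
My plan for part~(1) is to invoke Remark~\ref{rem:expansion}(c) directly: since $\Flag(\tad)$ is a compact smooth manifold and the $\Ga$-action is by $C^1$-diffeomorphisms, it suffices to exhibit for each $\tau\in\La:=\be(\pa\Ga)$ some $g\in\Ga$ with $\ep(\rho(g),\tau)>1$. Writing $\tau=\be(\xi)$ and choosing any geodesic ray $r:\N_0\to\Ga$ from $\id$ asymptotic to $\xi$, the non-uniform Anosov condition (d) yields some $n$ with $\ep(r(n)^{-1},\tau)>1$; then $g=r(n)^{-1}$ does the job.

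For part~(2), I would begin by observing that $\be:\pa\Ga\to\La$ is a homeomorphism (a continuous bijection from a compact Hausdorff space, with injectivity built into antipodality). Hence $\be^{-1}:\La\to\pa\Ga$ is an equivariant continuous nowhere constant map, and Theorem~\ref{thm:e=>h} immediately upgrades the expansion hypothesis to uniform S-hyperbolicity and identifies $\be^{-1}$ with the coding map $\pi$.

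To extract the non-uniform Anosov condition at a point $\tau=\be(\xi)\in\La$, I would fix any $\de$-code $\al$ for $\tau$ and consider the associated coding ray $c^\al\in\Ray_\tau(\D,\de)$. Lemma~\ref{lem:nest} provides the pointwise bound $\ep(\rho(c^\al_i)^{-1},\tau)\ge\la^i/L$, growing exponentially in $i$. By Corollary~\ref{cor:qg}, $c^\al$ is a uniform quasigeodesic, and as in the proof of Theorem~\ref{thm:e=>h} its image lies within some uniform Hausdorff distance $D$ of a geodesic ray $\id\xi$ in the Cayley graph of $\Ga$. The Morse lemma for hyperbolic groups then shows that \emph{any} geodesic ray $r$ from $\id$ to $\xi$ stays within uniform Hausdorff distance $D'$ of $c^\al(\N_0)$, so for each $n$ one can write $r(n)=c^\al_{m(n)}u_n$ with $|u_n|_\vS\le D'$. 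Only finitely many such $u_n$ occur, and $\Flag(\tad)$ is compact, so the factors $\ep(\rho(u_n^{-1}),\cdot)$ are bounded below by a uniform constant $c>0$; the chain rule for expansion then gives
\[
\ep(\rho(r(n)^{-1}),\tau)\ge\ep(\rho(u_n^{-1}),\rho(c^\al_{m(n)})^{-1}(\tau))\cdot\ep(\rho(c^\al_{m(n)})^{-1},\tau)\ge c\cdot\la^{m(n)}/L.
\]
Since $c^\al$ is a quasigeodesic with $d_\vS(r(n),c^\al_{m(n)})\le D'$, we have $m(n)\to\infty$ linearly in $n$, so the right side tends to $+\infty$, verifying condition (d). The uniqueness of the non-uniform Anosov boundary embedding (noted just after Definition~\ref{def:anosov}) then forces $\be$ to coincide with the asymptotic embedding.

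The hard part will be the Morse-theoretic comparison between the coding ray $c^\al$ and an arbitrary geodesic ray $r$ from $\id$ to $\xi$: one must extract a uniformly linear relationship between $n$ and $m(n)$ and a uniform lower bound on $\ep(\rho(u_n^{-1}),\cdot)$ independent of $n$, combining the finiteness of words of length $\le D'$ in $\vS$ with the compactness of $\Flag(\tad)$. Once that bookkeeping is in hand, everything else flows directly from the encoding/expansion machinery already developed in Sections~\ref{sec:expansion}--\ref{sec:encoding} and from Theorem~\ref{thm:e=>h}.
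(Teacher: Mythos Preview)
Your proposal is correct and follows essentially the same argument as the paper. For part~(1) the paper likewise points to Remark~\ref{rem:expansion}(c) together with condition~(d); for part~(2) the paper also writes $r(i)=c^\al_{n_i}g_i$ with $|g_i|_\vS$ uniformly bounded, bounds the expansion factor of $r(i)^{-1}$ below by a uniform constant times $\ep((c^\al_{n_i})^{-1},\be(\xi))$, and invokes Lemma~\ref{lem:nest} to send the latter to infinity --- exactly your chain-rule and compactness step, with your explicit appeal to Theorem~\ref{thm:e=>h} replacing the paper's reference to ``as in the proof of Lemma~\ref{lem:conv}.''
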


\begin{proof}
(1) This is a special case of \cite{KLP17}*{Equivalence Theorem 1.1}: every $\tad$-Anosov subgroup $\Ga<G$ is expanding at $\varphi(\pa\Ga)$. To see this directly, rather, note from the condition (d) in Definition~\ref{def:anosov} that for each $\xi\in \pa\Ga$ there is an element $g\in \Ga$ such that $\ef(g,\psi(\xi))>1$. Then Remark~\ref{rem:expansion}(c) implies the $\Ga$-action on $\Flag(\tad)$ is expanding at $\varphi(\pa\Ga)$.

(2) Suppose the $\Ga$-action on $\Flag(\tad)$ is expanding at $\varphi(\pa\Ga)$ with data $\D=(\de)$. Since $\pa\Ga$ is perfect, Corollary~\ref{cor:qg} applies. Thus, for any $\eta\in(0,\de]$, the rays $c^\al$ associated to $\eta$-codes $\al$ for $\varphi(\xi)\in\varphi(\pa\Ga)$ are uniform quasigeodesic rays in $\Ga$.

Let $\xi\in\pa\Ga$ and let $r:\N_0\to\Ga$ be a geodesic ray starting at $\id\in\Ga$ and asymptotic to $\xi$. If $\al$ is an $\eta$-code for $\varphi(\xi)$ then, as in the proof of Lemma~\ref{lem:conv}, the Hausdorff distance between $\{r(k)\}_{k\in \N_0}$ and $\{c^\al_j\}_{j\in \N_0}$ is bounded above by a uniform constant $C>0$. This means that for each $k\in \N_0$, there exist $n_k\in\N_0$ and an element $g_k\in\Ga$ with $|g_k|_\vS\le C$ such that $r(k)=c^\al_{n_k} g_k$. Then we have
\[
\ef(r(k)^{-1},\varphi(\xi))
=\ef(g_k^{-1}(c^\al_{n_k})^{-1},\varphi(\xi))
\ge A \cdot \ef((c^\al_{n_k})^{-1},\varphi(\xi)),
\]
where $A=\inf\{\ef(g,\varphi(\zeta)) \mid \zeta\in\pa\Ga,\ g\in\Ga\textup{ and }|g|_\vS\le C\}$.
Since $\ef( (c^\al_{j})^{-1},\varphi(\xi))$ tends to infinity as $j$ tends to infinity (by the last statement of Lemma~\ref{lem:nest}), it follows that
\[
\sup_{k\in \mathbb N} \ef(r(k)^{-1},\varphi(\xi))=+\infty.
\]
Therefore, the $\Ga$-action satisfies the condition (d) of Definition~\ref{def:anosov}.
\end{proof}

A corollary of this theorem is the stability of Anosov subgroups; see, for example, \cite{GW}*{Theorem 1.2} and \cite{KLP14}*{Theorem 1.10}. Let us denote by
\[
\Hom^\tau(\Ga,G)
\]
the space of faithful representations $\Ga\to G$ with (non-uniformly) $\tad$-Anosov images.

\begin{corollary}\label{cor:stability}
Suppose that $\Ga$ is a non-elementary hyperbolic group. Then
\begin{enumerate}[label=\textup{(\arabic*)},nosep,leftmargin=*]
\item
$\Hom^\tau(\Ga,G)$ is open in $\Hom(\Ga,G)$.
\item
For any sequence of representations $\rho_i\in \Hom^\tau(\Ga,G)$ converging to $\rho\in \Hom^\tau(\Ga,G)$, the asymptotic embeddings $\psi_i:\pa\Ga\to\La_{\rho_i(\Ga)}(\tad)$ converge uniformly to the asymptotic embedding $\psi:\pa\Ga\to\La_{\rho(\Ga)}(\tad)$.
\end{enumerate}
\end{corollary}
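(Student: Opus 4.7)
The plan is to combine Theorem~\ref{thm:equiv}, which characterizes Anosov subgroups via S-hyperbolic actions on flag manifolds, with the structural stability Theorem~\ref{thm:main}. Fix $\rho\in \Hom^\tau(\Ga,G)$ with asymptotic embedding $\al\colon\pa\Ga\to\La:=\La_{\rho(\Ga)}(\tad)$. Implication (1$\Rightarrow$3) of Theorem~\ref{thm:equiv} makes the $\rho$-action on $\Flag(\tad)$ S-hyperbolic at $\La=\al(\pa\Ga)$, and Corollary~\ref{cor:hyp} upgrades this to uniform meandering hyperbolicity. Since $G$ acts smoothly on $\Flag(\tad)$, the natural map $\Hom(\Ga,G)\to \Hom(\Ga,\Homeo(\Flag(\tad)))$ is continuous for the compact-open Lipschitz topology, so pulling back the neighborhood $U(\rho;K,\ep)$ produced by Theorem~\ref{thm:main} yields a neighborhood $V\subset\Hom(\Ga,G)$ of $\rho$. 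For each $\rho'\in V$, I then obtain a $\rho'$-invariant compact set $\La'\subset\Flag(\tad)$, an equivariant homeomorphism $\phi_{\rho'}\colon\La\to\La'$ depending continuously on $\rho'$ (with $\phi_\rho=\mathrm{id}$), and an expanding $\rho'$-action at $\La'$.

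To conclude $\rho'\in\Hom^\tau(\Ga,G)$ I must verify three things: antipodality of $\La'$, faithfulness of $\rho'$, and the non-uniform $\tad$-Anosov property of $\rho'(\Ga)$. I expect antipodality to be the main obstacle, since $\phi_{\rho'}$ being $C^0$-close to the identity does not a priori prevent two nearby distinct flags in $\La'$ from failing to be opposite. The plan is to exploit cocompactness of the $\Ga$-action on the space of distinct ordered pairs in $\pa\Ga$ (which holds because $\Ga$ is non-elementary hyperbolic): choose a compact fundamental domain $F$; then $(\al\times\al)(F)$ is a compact subset of the open locus of antipodal pairs in $\Flag(\tad)\times\Flag(\tad)$. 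By continuity of $\rho'\mapsto\phi_{\rho'}$ and compactness of $F$, after shrinking $V$ the image $((\phi_{\rho'}\circ\al)\times(\phi_{\rho'}\circ\al))(F)$ remains inside the antipodal locus. Since $G$ preserves antipodality, equivariance of $\phi_{\rho'}\circ\al$ together with $\Ga\cdot F$ exhausting the distinct pairs propagates antipodality to every distinct pair in $\La'$.

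Faithfulness of $\rho'$ follows from algebraic stability (Corollary~\ref{cor:alg-stab}): the kernel of the $\rho'$-action on $\La'$ equals that of the $\rho$-action on $\La$, which by equivariance of $\al$ is the maximal finite normal subgroup $K_0$ of $\Ga$. For each of the finitely many $g\in K_0\setminus\{e\}$, continuity gives $\rho'(g)\ne e$ for $\rho'$ close to $\rho$, yielding $\ker\rho'=\{e\}$ after further shrinking $V$. With antipodality in hand, $\al':=\phi_{\rho'}\circ\al\colon\pa\Ga\to\La'$ is a boundary embedding, and combining this with expansion of the $\rho'$-action at $\La'$, implication (2$\Rightarrow$1) of Theorem~\ref{thm:equiv} yields that $\rho'(\Ga)$ is non-uniformly $\tad$-Anosov with asymptotic embedding $\al'$; this proves part~(1). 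For part~(2), uniqueness of the asymptotic embedding (the remark after Definition~\ref{def:anosov}) forces $\al_i=\phi_{\rho_i}\circ\al$ whenever $\rho_i\to\rho$ in $\Hom^\tau(\Ga,G)$, and Theorem~\ref{thm:main}(2) gives $\phi_{\rho_i}\to\mathrm{id}$ uniformly on $\La$, so $\al_i\to\al$ uniformly on $\pa\Ga$.
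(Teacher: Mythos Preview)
Your proof is correct and follows the same overall strategy as the paper: apply Theorem~\ref{thm:equiv} to get S-hyperbolicity, invoke Theorem~\ref{thm:main} to obtain the perturbed invariant set $\La'$ and the equivariant homeomorphism $\phi_{\rho'}$, then verify antipodality and faithfulness to feed back into Theorem~\ref{thm:equiv}. Two points of execution differ slightly from the paper. First, for antipodality the paper simply asserts that continuity of $\rho'\mapsto\La'$ suffices, whereas you supply the missing uniformity argument via cocompactness of the $\Ga$-action on distinct pairs in $\pa\Ga$; your treatment is more careful and addresses exactly the concern you raise about nearby points possibly failing to be opposite. Second, for faithfulness the paper invokes rigidity of finite subgroups of Lie groups (citing \cite{KLP14}), while your argument---that $\rho'(g)\ne e$ is an open condition for each of the finitely many $g\in K_0\setminus\{e\}$---is more elementary and entirely sufficient here. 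The upgrade to uniform meandering hyperbolicity via Corollary~\ref{cor:hyp} is harmless but unnecessary, since you only use parts (1)--(3) of Theorem~\ref{thm:main}.
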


\begin{proof}
We start with an embedding $\rho\in\Hom^\tau(\Ga,G)$; let $\La:=\La_{\rho(\Ga)}(\tad)$ and
\[
\psi:\pa\Ga\to\La\subset\Flag(\tad)
\]
denote the asymptotic embedding of $\rho$. By Theorem~\ref{thm:equiv}, the $\Ga$-action on $\La$ is S-hyperbolic. By Theorem~\ref{thm:main} there exists a small neighborhood $U'$ of $\rho$ in $\Hom(\Ga,G)$ such that, for each $\rho'\in U'$, there exists a $\rho'$-invariant compact $\La'\subset\Flag(\tad)$ at which the $\rho'$-action is expanding and there is an equivariant homeomorphism $\phi:\La\to\La'$.

By Corollary~\ref{cor:alg-stab}, for every $\rho'\in U'$, the kernel of the action of $\Ga'= \rho'(\Ga)$ on $\La'$ equals the kernel of the action of $\Ga$ on $\La$. Since $\Ga$ is assumed to be non-elementary, it acts on $\La$ with finite kernel $\Phi$ (Corollary~\ref{cor:pi}). Therefore, the kernel of $\rho'$ is contained in the finite subgroup $\Phi< \Ga$. As explained in \cite{KLP14}*{proof of Corollary 7.34}, rigidity of finite subgroups of Lie groups implies that $U'$ contains a smaller neighborhood $U$ of $\rho$ such that every $\rho'\in U$ is injective on $\Phi$. Therefore, every $\rho'\in U$ is faithful.

Since $\La'$ depends continuously on $\rho'$ (see Section~\ref{sec:cont'}), the antipodality of $\La$ leads to the antipodality of $\La'$. (In order to guarantee this, one may further reduce the size of $U$ if necessary.) Thus $\phi\circ\psi:\pa\Ga\to\La'$ is a boundary embedding of $\Ga'$. From Lemma~\ref{lem:c}(2) we conclude that $\Ga'<G$ is again (non-uniformly) $\tad$-Anosov and the boundary embedding $\phi\circ\psi$ of $\Ga'$ is uniformly close to $\psi$.
\end{proof}

\subsection{Historical remarks on stability}

The history of stability for convex-cocompact (and, more generally, geometrically finite) Kleinian groups goes back to the pioneering work of Marden \cite{Marden} (in the case of subgroups of $\PSL(2,\C)$). It appears that the first proof of stability of geometrically finite subgroups of $\Isom(\H^n)$ (the isometry group of the hyperbolic $n$-space) was given by Bowditch in \cite{Bo}, although many arguments are already contained in \cite{CEG}. Bowditch in his paper also credits this result to P.~Tukia. A generalization of the Sullivan's stability theorem for subgroups of $\Isom({\mathbb H}^n)$, proving the existence of a quasiconformal conjugation on the entire sphere at infinity, was given by Izeki \cite{Izeki}.

After Sullivan's paper \cite{Sul}, a proof of stability for convex-cocompact subgroups of rank one semisimple Lie groups was given by Corlette \cite{Corlette}. Corlette's proof also goes through in the setting of $C^1$-stability as it was observed by Yue \cite{Yue}. Unlike the proofs of Sullivan and Bowditch, however, Corlette's proof is based on an application of Anosov flows; the same tool is used in the subsequent proofs of stability of \emph{Anosov subgroups} of higher rank semisimple Lie groups.

The notion of Anosov subgroups was introduced by Labourie \cite{Lab} as a natural analogue of convex-cocompact groups, and the theory was further developed by Guichard and Wienhard \cite{GW} to include all word-hyperbolic groups.
Subsequently, Kapovich, Leeb and Porti \cite{KLP17} provided new characterizations of Anosov subgroups investigating their properties from many different perspectives.
In particular, they gave an alternative proof of stability of Anosov subgroups in \cite{KLP14} using coarse-geometric ideas. Recently, Bochi, Potrie and Sambarino \cite{BPS} provided a purely dynamical proof of the structural stability of Anosov representations.

On the other hand, a notion of convex-cocompactness for discrete subgroups of $\PGL(n,\R)$ was studied by Danciger, Gu\'eritaud and Kassel in \cite{DGK}*{Definition 1.11}. While every Anosov subgroup is word-hyperbolic, convex-cocompact subgroups of $\PGL(n,\R)$ in their definition are not necessarily word-hyperbolic. Nevertheless, stability in the sense of linear deformations was established for such groups in \cite{DGK}*{Theorem 1.17(D)}.

\section{Other examples}\label{sec:examples} 

We present a number of examples and non-examples of S-hyperbolic actions.

\subsection{Expanding but not S-hyperbolic actions}\label{sec:e/=>h}

In general, even for hyperbolic groups, the expansion condition alone does not imply the S-hyperbolicity condition.

\begin{example}[Action with infinite kernel]
Suppose that $\Ga'$ and $\Ga$ are non-elementary hyperbolic groups and $\phi:\Ga'\to\Ga$ is an epimorphism with infinite kernel. We equip the Gromov boundary $\La=\pa\Ga$ with a visual metric. Then the action $\rho:\Ga\to\Homeo(\La)$ is expanding (compare Corollary~\ref{cor:hyp-hyp}). Thus, the associated $\Ga'$-action $\rho\circ\phi:\Ga'\to\Homeo(\La)$ is expanding as well. But this action cannot be S-hyperbolic: see Corollary~\ref{cor:pi}.
\end{example}

\begin{example}[Non-discrete action]
Suppose that $\Ga$ is a non-elementary hyperbolic group and $\rho:\Ga\to G=\Isom(\H^n)$ is a representation with dense image. Then the associated action $\rho:\Ga\to\Homeo(\La)$ on the visual boundary $\La=\pa\H^n$ is expanding due to the density of $\rho(\Ga)$ in $G$. But the action $\rho:\Ga\to\Homeo(\La)$ cannot be S-hyperbolic. Indeed, by the density of $\rho(\Ga)$ in $G$, there is a sequence of distinct elements $g_i\in\Ga$ such that $\rho(g_i)$ converges to the identity element of $G$. 
If $\rho$ were S-hyperbolic, then Corollary~\ref{cor:pi} would give a contradiction that the set $\{g_i\}$ is finite.

More generally, the same argument works for representations $\rho:\Ga\to G$ of non-elementary hyperbolic groups to a semisimple Lie group $G$ with dense images $\rho(\Ga)$. The associated actions $\rho$ of $\Ga$ on the partial flag manifolds $G/P$ are expanding but not S-hyperbolic.
\end{example}

\subsection{S-hyperbolic actions of hyperbolic groups}\label{sec:ex1}

In addition to the toy examples in Section~\ref{sec:toy} we now give more interesting examples of S-hyperbolic actions of hyperbolic groups. As we proceed, the associated postal maps $\pi:\La\to\pa\Ga$ will be increasingly more complicated.

Recall that a discrete subgroup $\Ga<\Isom(\H^n)$ is \emph{convex-cocompact} if its limit set $\La=\La_\Ga\subset S^{n-1}=\pa\H^n$ is not a singleton and $\Ga$ acts cocompactly on the closed convex hull of $\La$ in $\H^n$. We refer to \cite{Bowditch-gf} for details on convex-cocompact and, more generally, geometrically finite isometry groups of hyperbolic spaces.

Every convex-cocompact (discrete) subgroup $\Ga$ of isometries of $\H^n$ (and, more generally, a rank one symmetric space) is S-hyperbolic. More precisely, for $\La=\La_\Ga$, the action $\Ga\to\Homeo(S^{n-1})$ is S-hyperbolic. This can be either proven directly using a \emph{Ford fundamental domain} (as in \cite{Sul}*{Theorem I} by considering the conformal ball model of $\H^n$ inside $\R^n$) or regarded as a special case of S-hyperbolicity of Anosov subgroups (Theorem~\ref{thm:equiv}).

Unlike the convex-cocompact or Anosov examples, the invariant compact set $\La$ is not equivariantly homeomorphic to the Gromov boundary in the examples below.

\begin{example}[$k$-fold non-trivial covering]\label{ex:E0} 
Let $S_g$ be a closed oriented hyperbolic surface of genus $g\ge 2$; it is isometric to the quotient $\H^2/\Ga$, where $\Ga\cong \pi_1(S_g)$ is a discrete subgroup of $\PSL(2,\R)$. Take any $k\ge 2$ dividing $2g-2$. Since the Euler number of the unit circle bundle of $S_g$ is $2-2g$, same as the Euler number of the action of $\Ga$ on $S^1=\pa\H^2$, it follows that the action of $\Ga$ lifts to a smooth action
\[
\wt\rho:\Ga\to\Diff^1(S^1) 
\]
with respect to the degree $k$ covering $p:\La= S^1\to S^1$. We pull-back the Riemannian metric from the range to the domain $\La$ via the map $p$. Since $\Ga<\PSL(2,\R)$ is convex-cocompact, its action on $\pa \H^2$ is S-hyperbolic. Let $\{U_\al \mid i\in\I\}$ be a collection of expanding subsets (arcs) in $S^1=\pa \H^2$ corresponding to a generating set $\vS=\{s_i \mid i \in \I\}$ of $\Ga$. As in Example~\ref{ex:E}, we lift these arcs to connected components
\[
\{\wt{U}_{j_i}\subset p^{-1}(U_j) \mid j\in\I,\; i=1,\ldots,k\}. 
\]
These will be expanding subsets for the generators $s_{j_i}=s_j$ ($j\in\I$, $i=1,\ldots,k$) of $\Ga$. The action $\wt\rho$ will be minimal and S-hyperbolic (because the original action of $\Ga$ is).

Thus, we obtain an example of a minimal S-hyperbolic action of a hyperbolic group on a set $\La$ which is not equivariantly homeomorphic to $\pa\Ga$.
\end{example} 

Similar examples of S-hyperbolic actions on $S^1$ can be obtained by starting with a general convex-cocompact Fuchsian subgroup $\Ga_0< \PSL(2,\R)$ and lifting it to an action of a finite central extension $\Ga$ of $\Ga_0$ on $S^1$ via a finite covering $S^1\to S^1$. We refer to the paper \cite{Deroin} of Deroin for details as well as the related classification of expanding real-analytic group actions on the circle.

Below is a variation of the above construction.

\begin{example}[Trivial covering]\label{ex:E1}
Let $\Ga_0$ be a non-elementary hyperbolic group with the Gromov boundary $\La_0=\pa\Ga_0$ equipped with a visual metric. Let $\La=\La_0\times \{0,1\}$ and $\Ga_0\to\Homeo(\La)$ be the product action where $\Ga_0$ acts trivially on $\{0, 1\}$. This action is S-hyperbolic but, obviously, non-minimal.

We extend this action of $\Ga_0$ to an action of $\Ga=\Ga_0\times\Z_2$, where the generator of $\Z_2$ acts by the map
\[
(\xi, i)\mapsto (\xi, 1-i)\quad(\textup{for }\xi\in\La_0\textup{ and }i=0,1)
\]
with an empty expansion subset. The action $\Ga \to \Homeo(\La)$ is easily seen to be faithful, S-hyperbolic and minimal. It is clear, however, that $\La$ is not equivariantly homeomorphic to $\pa\Ga\cong \pa\Ga_0$.
\end{example}

In the preceding examples we had an equivariant finite covering map $\pi:\La\to\pa\Ga$. In the next example the postal map $\pi:\La\to\pa\Ga$ is a finite-to-one open map but not a local homeomorphism; one can regard the map $\pi$ as a \emph{generalized branched covering} (in the sense that it is an open finite-to-one map which is a covering map away from a codimension $2$ subset).

\begin{example}[Generalized branched covering]\label{ex:branch}

Let $\Ga<\PSL(2,\R)$ be a \emph{Schottky subgroup}, that is, a convex-cocompact non-elementary free subgroup. Its limit set $\La_\Ga\subset S^1$ is homeomorphic to the Cantor set; it is also equivariantly homeomorphic to the Gromov boundary $\pa\Ga$.

We regard $\Ga$ as a subgroup of $\PSL(2,\C)$ via the standard embedding $\PSL(2,\R)\to\PSL(2,\C)$. The domain of discontinuity of the action of $\Ga$ on $S^2$ is $\Omega_{\Ga}= S^2-\La_\Ga$; the quotient surface $S=\Omega_\Ga/\Ga$ is compact and its genus equals to the rank $r$ of $\Ga$. We let $\chi:\pi_1(S)\to F$ be a homomorphism to a finite group $F$ which is non-trivial on the image of $\pi_1(\Omega_\Ga)$ in $\pi_1(S)$. For concreteness, we take the following homomorphism $\chi: \pi_1(S)\to F=\Z_2$. We let $\{a_1,b_1,\ldots,a_r,b_r\}$ denote a generating set of $\pi_1(S)$ such that $a_1,\ldots,a_r$ lie in the kernel of the natural homomorphism $\phi:\pi_1(S)\to\Ga$, while $\phi$ sends $b_1,\ldots,b_r$ to (free) generators of $\Ga$. Then take $\chi$ such that $\chi(a_1)=1\in \Z_2$, while $\chi$ sends the rest of the generators to $0\in \Z_2$. This homomorphism to $F$, therefore, lifts to an epimorphism $\wt\chi:\pi_1(\Omega_\Ga)\to F$ with $\Ga$-invariant kernel $K< \pi_1(\Omega_\Ga)$. Hence, there exists a non-trivial 2-fold covering
\[
p:\wt\Omega\to\Omega_\Ga
\]
associated to $K$ and the action of the group $\Ga$ on $\Omega_\Ga$ lifts to an action of $\Ga$ on $\wt\Omega$. One verifies that $p$ is a proper map which induces a surjective but not injective map $p_\infty: \End(\wt\Omega)\to\End(\Omega_\Ga)$ between the spaces of ends of the surfaces $\wt\Omega$ and $\Omega_\Ga$. Since $p$ is a 2-fold covering map, the induced map $p_\infty$ is at most 2-to-1 (that is, the fibers of $p_\infty$ have cardinality $\le 2$).

We let $ds^2$ denote the restriction of the standard Riemannian metric on $S^2$ to the domain $\Omega_\Ga$ and let $\wt{ds^2}$ denote the pull-back of $ds^2$ to $\wt\Omega$. The Riemannian metric $ds^2$ is, of course, incomplete; the Cauchy completion of the associated Riemannian distance function $d_{\Omega_\Ga}$ on $\Omega_\Ga$ is naturally homeomorphic to $S^2$ (which is also the end-compactification of $\Omega_\Ga$), as a sequence in $\Omega_\Ga$ is Cauchy with respect to the metric $d_{\Omega_\Ga}$ if and only if it converges in $S^2$. (Here we are using the assumption that $\La_\Ga$ is contained in the circle $S^1$.)

We therefore let $(M,d)$ denote the Cauchy completion of the Riemannian distance function of $(\wt\Omega, \wt{ds^2})$. One verifies that $M$ is compact and is naturally homeomorphic to the end-compactification of $\wt\Omega$. In particular, the covering map $p:\wt\Omega\to\Omega_\Ga$ extends to a continuous open finite-to-one map
\[
p: M\to S^2
\]
sending $\La:= M- \wt\Omega$ to $\La_\Ga$. The map $p: M\to S^2$ is locally one-to-one on $\wt\Omega$ but fails to be a local homeomorphism at $\La$. Since every element of $\Ga$ acts as a Lipschitz map to $(\wt\Omega, \wt{ds^2})$, the action of $\Ga$ on $(\wt\Omega, \wt{ds^2})$ extends to an action of $\Ga$ on $M$ so that every element of $\Ga$ is a Lipschitz map and $\La$ is a $\Ga$-invariant compact subset of $M$. The map $p: M\to S^2$ is equivariant with respect to the actions of $\Ga$ on $M$ and on $S^2$. Similarly to our covering maps examples, the action $\Ga\to\Homeo(M)$ is S-hyperbolic at $\La$. The postal map $\pi:\La\to\La_\Ga=\pa\Ga$ equals the restriction of $p$ to $\La$ and, hence, is not a local homeomorphism.
\end{example}

\medskip 
In the next example the postal map $\pi:\La\to\pa\Ga$ is not even an open map.

\begin{example}[Denjoy blow-up]\label{ex:blowup} 
We let $\Ga$ be the fundamental group of a closed hyperbolic surface $M^2$. Let $c\subset M^2$ be a simple closed geodesic representing the conjugacy class $[\ga]$ in $\Ga$. The Gromov boundary of $\Ga$ is the circle $S^1$. We perform a \emph{blow-up} of $S^1$ at the set $\Phi\subset S^1$ of fixed points of the elements in the conjugacy class $[\ga]$, replacing every fixed point by a pair of points. See Figure~\ref{fig:Denjoy}. The resulting topological space $\La$ is homeomorphic to the Cantor set; the quotient map
\[
q:\La\to S^1
\]
is 1-to-1 over $S^1- \Phi$ and is 2-to-1 over $\Phi$. The map $q$ is quasi-open (with $O_q=\La - q^{-1}(\Phi)$) but not open. (This map is an analogue of the Cantor function $f:C\to[0,1]$ mentioned in the beginning of Section~\ref{sec:prelim}.) The action of $\Ga$ on $S^1$ lifts to a continuous action of $\Ga$ on $\La$ with every $g\in [\ga]$ fixing all the points of the preimage of the fixed-point set of $g$ in $S^1$. In particular, the action of $\Ga$ on $\La$ is minimal. One can metrize $\La$ so that the action $\Ga\to \Homeo(\La)$ is S-hyperbolic with the postal map $\pi:\La\to\pa\Ga$ being equal to the quotient map $q:\La\to S^1$.

\begin{figure}[ht]
\labellist
\pinlabel {$q$} at 460 230
\pinlabel {$\La$} at 400 204
\pinlabel {$S^1$} at 526 204
\pinlabel {$g\in [\ga]$} at 752 204
\pinlabel {$g_+$} at 708 376
\pinlabel {$q^{-1}(g_+)$} at 200 270
\endlabellist
\centering
\includegraphics[width=0.8\textwidth]{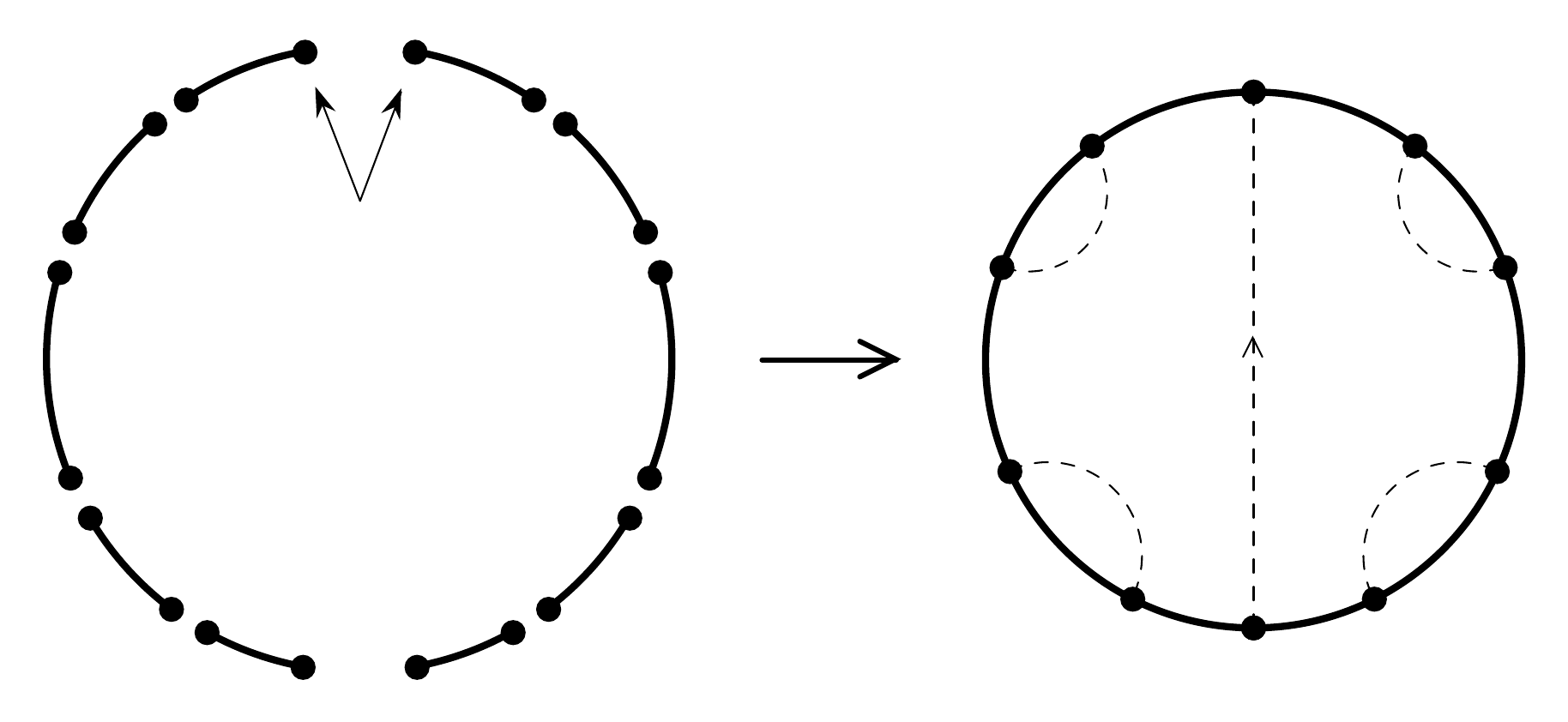}
\caption{Denjoy blow-up.}
\label{fig:Denjoy}
\end{figure}

More generally, one can define a Denjoy blow-up for actions of fundamental groups of higher-dimensional compact hyperbolic manifolds. Let $M^n=\H^n/\Ga$ be a compact hyperbolic $n$-manifold containing a compact totally geodesic hypersurface $C$. Let $A\subset \H^n$ denote the preimage of $C$ in $\H^n$. The visual boundary of each component $A_i$ of $A$ is an $(n-2)$-dimensional sphere $S_i\subset S^{n-1}=\pa \H^n$. The blow-up $\La$ of $S^{n-1}$ is then performed by replacing each sphere $S_i$ with two copies of this sphere. The result is a compact topological space $\La$ equipped with a quotient map $q: \La\to S^{n-1}$ such that $q$ is 1-to-1 over every point not in $S= \cup_i S_i$ and is 2-to-1 over every point in $S$. Each connected component of $\La$ is either a singleton or is homeomorphic to the $(n-2)$-dimensional Sierpinsky carpet. The action of $\Ga$ on $S^{n-1}$ lifts to a continuous action of $\Ga$ on $\La$ which is S-hyperbolic for a suitable choice of a metric on $\La$.
\end{example}

\subsection{S-hyperbolic actions of non-hyperbolic groups}

Here we consider non-hyperbolic groups and examples of their S-hyperbolic actions.

The following example shows that faithfulness of an S-hyperbolic action on $M$ does not imply faithfulness of perturbed actions.

\begin{example}[Non-discrete representation]\label{ex:H^4}
Suppose that $M$ is the standard compactification of the hyperbolic space $\H^4$, $P=\H^2\subset \H^4$ is a hyperbolic plane, and $\La=S^1\subset S^3=\pa\H^4$ is the ideal boundary of $P$. We let
\[
\Ga=\Ga_1\times\Ga_2, 
\]
where $\Ga_1$ is a hyperbolic surface group and $\Ga_2\cong \Z$. We consider a faithful isometric action $\rho$ of $\Ga$ on $\H^4$ where $\Ga_1$ preserves $P$ and acts on it properly discontinuously and cocompactly, while $\Ga_2$ acts as a group of elliptic isometries fixing $P$ pointwise. This action admits a conformal extension to $M$. Since the subgroup $\Ga_1< \Isom(\H^4)$ is convex-cocompact, it is expanding at its limit set, which is equal to $\La$. We take $\vS=\vS_1\times\{\id\}\cup\{\id\}\times\{r,r^{-1}\}$ as a symmetric generating set of $\Ga$, where $\vS_1$ is a finite generating set of $\Ga_1$ (given by its expanding action) and $r$ is a single generator of $\Ga_2$. The expansion subsets $U_r$, $U_{r^{-1}}$ of $r$, $r^{-1}$ (as in the definition of an expanding action) are defined to be the empty set. The action $\rho$ on $M$ is uniformly S-hyperbolic: the expansion property is clear; for uniform S-hyperbolicity, we observe that all the rays in $\Ray_x(\D,\de)$ for the action of $\Ga$ have the form
\[
(s\, c^\al_k)_{k\in\N} 
\] 
where $s\in\vS$ and $c^\al$ is a $\de$-ray in $\Ga_1\times\{\id\}$ associated to a special code $\al$ for $x\in\La$.

The image $\rho(r)$ is an infinite order elliptic rotation. Therefore, we can approximate $\rho$ by isometric actions $\rho_k$ of $\Ga$ on $\H^4$ such that $\rho_k|_{\Ga_1}=\rho|_{\Ga_1}$, while $\rho_k(r)$ is an elliptic transformation of order $i$ fixing $P$ pointwise. In particular, the representations $\rho_k$ are not faithful.
\end{example}

In the next two examples, a non-hyperbolic $\Ga$ acts faithfully and S-hyperbolically on $\La$.

\begin{example}[Actions of product groups]\label{ex:E2} 
For $k=1,2$, we consider S-hyperbolic actions $\rho_k: \Ga_k\to \Homeo(\La_k)$. Let $(\I_k,\U_k,\vS_k)$ be the respective expansion datum. Since $\La_k$'s are non-empty, the groups $\Ga_k$ are infinite. Consider the group
\[
\Ga=\Ga_1\times\Ga_2.
\]
This group is non-hyperbolic since its Cayley graph contains a quasi-flat (the product of complete geodesics in the Cayley graphs of $\Ga_1$ and $\Ga_2$). Let $\I=\I_1\cup\I_2$ and we equip $\Ga$ with a symmetric generating set
\[
 \vS=\vS_1\times\{\id\}\sqcup\{\id\}\times\vS_2. 
\]
Define the space $\La=\La_1\sqcup\La_2$. The group $\Ga$ acts on $\La$ as follows:
\[
(\ga_1, \ga_2)(x)= \ga_k(x)\;\textup{ if } x\in \La_k,
\]
where $(\ga_1,\ga_2)\in\Ga_1\times\Ga_2$. If the actions of $\Ga_i$ on $\La_i$ are both faithful, so is the action of $\Ga$ on $\La$. As a cover $\U$ of $\La$ we take the union $\U_1\cup\U_2$ of respective covers. We leave it to the reader to verify that the action of $\Ga$ on $\La$ is S-hyperbolic.

This example can be modified to a minimal action. Namely, take identical actions $\rho_1=\rho_2$ of the same group $\Ga_1=\Ga_2$ and then extend the action of $\Ga=\Ga_1\times \Ga_1$ on $\La$ to a minimal action of $\Ga\rtimes \Z_2$ as in Example~\ref{ex:E1}. Here the generator of $\Z_2$ swaps the direct factors of $\Ga$.
\end{example}

\begin{example}[$\Z^n$ acting on $\p^n(\R)$]
There is an S-hyperbolic action of $\Z^n$ on $M=\p^n(\R)$ by projective transformations.

Let $\{E_i\mid i=0,1,\ldots,n\}$ be the standard basis of $\R^{n+1}$. Let $\Z^n<\GL(n+1,\R)$ be the free abelian group of rank $n$ generated by \emph{bi-proximal} diagonal matrices $G_j$ ($1\le j\le n$) for which $E_0$ (resp. $E_j$) is the eigenvector of the biggest (resp. smallest) modulus eigenvalue. Denote by $e_i\in\p^n(\R)$ and $g_j\in\PGL(n+1,\R)$ the projectivizations of $E_i$ and $G_j$, respectively. Let $\La=\{e_i\mid i=0,1,\ldots,n\}\subset\p^n(\R)=M$ and $\vS=\{g_j,g_j^{-1}\mid 1\le j\le n\}$. We claim that the action $\Z^n\to\Homeo(M)$ is S-hyperbolic.

Let $U_{g_j}\subset M$ denote an expanding subset of $g_j$ and similarly $U_{g_j^{-1}}$ for $g_j^{-1}$. We \emph{assume} that $U_{g_j^{-1}}=\emptyset$ for $j=2,3,\ldots,n$ but that $U_{g_1^{-1}}$ as well as $U_{g_j}$ ($1\le j\le n$) are non-empty. Then $\U=\{U_{g_j},U_{g_j^{-1}}\mid 1\le j\le n\}$ covers $\La$, since $e_0\in U_{g_1^{-1}}$ and $e_j\in U_{g_j}$ for $1\le j\le n$. Thus the action is expanding.

A ray associated to $e_j\in\La$ is of the form $\left(g(g_j^{-1})^k\right)_{k\in\N_0}$ with $g\in\vS$ and a ray associated to $e_0\in\La$ is of the form $\left(g(g_1)^k\right)_{k\in\N_0}$ with $g\in\vS$. In any case, each point in $\La$ has only a finite number of rays associated to it. Hence the S-hyperbolicity follows.
\end{example}

\subsection{Embedding into Lie group actions on homogeneous manifolds}

Examples~\ref{ex:E0}, \ref{ex:E1} and \ref{ex:E2} can be embedded in smooth Lie group actions on homogeneous manifolds.

For instance, consider the action of $\GL(3,\R)$ on the space of oriented lines in $\R^3$, which we identify with the 2-sphere $\s^2$ equipped with its standard metric. We have the equivariant 2-fold covering $p:\s^2\to\p^2(\R)$ with the covering group generated by the antipodal map $-I\in \GL(3,\R)$. Let $\H^2\subset \p^2(\R)$ be the Klein model of the hyperbolic plane invariant under a subgroup $\PSO(2,1)<\PSL(3,\R)$. Then $p^{-1}(\H^2)$ consists of two disjoint copies of the hyperbolic plane bounded by two circles $\La_1$ and $\La_2$. Taking a discrete convex-cocompact subgroup $\Ga_1< \SO(2,1)< \SL(3,\R)$, $\Ga= \Ga_1\times \langle -I \rangle\cong \Ga_1\times \Z_2$ and $\La= \La_1\cup \La_2$, we obtain an S-hyperbolic action
\[
\Ga\to \Homeo(\s^2), 
\]
which restricts on $\La$ to Example~\ref{ex:E1}. (When the group $\Ga_1$ is free, such an action is studied in detail by Choi and Goldman \cite{CG}; see also \cite{ST} for a greater generalization of the Choi-Goldman construction.)

Below is a more general version of the preceding construction. Recall that the set $\Hom^\tau(\Ga,G)$ of $\tad$-Anosov representations $\Ga\to G$ forms an open subset of the representation variety $\Hom(\Ga,G)$ (see Corollary~\ref{cor:stability}). Let $\Ga=\pi_1(S_g)$ ($g\ge2$) and $G=\PSL(n,\R)$ ($n\ge3$). We will consider two types of simplices $\tad$ for the Lie group $G$:
\begin{itemize}[nosep]
\item
$\sid$; the corresponding flag manifold $\Flag(\sid)$ consists of full flags in $\R^n$.
\item
$\tad$ of the type ``pointed hyperplanes''; the corresponding flag manifold $\Flag(\tad)$ consists of pairs $V_1\subset V_{n-1} \subset \R^n$ of lines contained in hyperplanes in $\R^n$.
\end{itemize}
In both cases we have a natural fibration $q: \Flag(\tad)\to \p(\R^n)$ sending each flag to the line in the flag.

\begin{example}[Hitchin and Barbot representations]
The Hitchin representations are $\sid$-Anosov representations belonging to a connected component $\Hom^\Hit(\Ga,G)$ of $\Hom^\si(\Ga,G)$ containing a representation
\[
\Ga\hookrightarrow\PSL(2,\R)\hookrightarrow\PSL(n,\R),
\]
where the first map is a Fuchsian representation of $\Ga$ and the second an irreducible embedding of $\PSL(2,\R)$.

We may also consider the standard reducible embedding $\iota:\SL(2,\R)\hookrightarrow\SL(n,\R)$ given by $A\mapsto\begin{psmallmatrix}A&0\\0&I\end{psmallmatrix}$. Let $\varphi:\Ga\hookrightarrow\PSL(2,\R)$ be a Fuchsian representation and $\tilde{\varphi}:\Ga\hookrightarrow\SL(2,\R)$ one of the $2g$ lifts of $\varphi$. Let $p:\SL(n,\R)\to\PSL(n,\R)$ denote the covering map, which is of degree $2$ if and only if $n$ is even. Representations of the form
\[
p\circ\iota\circ\tilde{\varphi}:\Ga\hookrightarrow\SL(2,\R)\hookrightarrow\SL(n,\R)\to\PSL(n,\R)
\]
are $\tad$-Anosov, where $\tad$ has the type of pointed hyperplanes. Let $\Hom^\Ba(\Ga,G)$ be the union of connected components of $\Hom^\tau(\Ga,G)$ containing such representations. We say representations in $\Hom^\Ba(\Ga,G)$ are of \emph{Barbot type}; see \cite{Bar} for the case $n=3$.

Let $\rho:\Ga\to\PSL(n,\R)$ be a Hitchin representation. It is known \cite{Lab} that the projection $q\circ\psi:\pa\Ga\to\Flag(\sid)\to\p^{n-1}(\R)$ of the asymptotic embedding $\psi$ is a hyper-convex curve. This curve is homotopically trivial if and only if $n$ is odd. On the other hand, if $\rho$ is of Barbot type, the curve $q\circ\psi:\pa\Ga\to\Flag(\tad)\to\p^{n-1}(\R)$ is always homotopically non-trivial.

We now lift to the space of oriented full flags (resp. oriented line-hyperplane flags). Accordingly, we lift the action of $\Ga$ on the sphere $\s^{n-1}$. The preimage $\La$ of $(q\circ\psi)(\pa\Ga)$ in $\s^{n-1}$ is either a Jordan curve or a disjoint union of two Jordan curves, with the 2-fold equivariant covering map
\[
\La\to (q\circ\psi)(\pa \Ga)\cong \p^1(\R)\cong S^1. 
\]
The result is an S-hyperbolic action $\tilde\rho: \Ga\to \Diff(\s^n)$ where $\tilde\rho(\Ga)$ is contained in the image of the group $\SL(n,\R)$ in $\Diff(\s^n)$. The restrictions of the $\Ga$-actions to $\La$ are as in Example~\ref{ex:E0} (with $k=2$) and Example~\ref{ex:E1}.
\end{example}

\begin{example}[Embedded product examples]
We embed Example~\ref{ex:E2} in the action of $\SL(4,\R)$ on $\p^3(\R)$. Consider $G_1\times G_2= \SL(2,\R)\times \SL(2,\R)< G= \SL(4,\R)$. The action of $G_1\times G_2$ on $\R^4$ is reducible, preserving a direct sum decomposition
\[
\R^4= V_1\oplus V_2,
\]
where $V_1$ and $V_2$ are 2-dimensional subspaces and $G_i$ acts trivially on $V_{3-k}$ for $k=1,2$. Let $\tau$ be an involution of $\R^4$ swapping $V_1$ and $V_2$. For $i=1,2$, take $\Ga_k< G_k$ to be an infinite (possibly elementary) convex-cocompact subgroup with the limit set $\La_k\subset\p(V_k)$. Then the subgroup $\Ga= \Ga_1 \times \Ga_2< G$ acts on $\p^3(\R)$ preserving the union $\La=\La_1\sqcup \La_2\subset\p(V_1)\sqcup\p(V_2)$. We equip $\p^3(\R)$ with its standard Riemannian metric. The action
\[
\Ga\to \Homeo(\p^3(\R))
\]
is S-hyperbolic and restricts on $\La$ to Example~\ref{ex:E2}. As in Example~\ref{ex:E2}, taking $\Ga_1=\Ga_2$ and an index two extension $\Ga$ of $\Ga_1\times \Ga_2$ we can extend this action to an S-hyperbolic action minimal on $\La$ using the involution $\tau$.
\end{example}

\subsection{Algebraically stable but not convex-cocompact}

We provide an example to the claim made in Remark~\ref{rem:sul}  that for groups with torsion the implication (4 $\Rightarrow$ 1) in Sullivan's paper \cite{Sul} is false.

\begin{example}[Quasiconformally stable non-convex-cocompact subgroups of $\PSL(2,\C)$]\label{ex:VD}
We recall also that a \emph{von Dyck group} $D(p,q,r)$ is given by the presentation
\[
\langle a, b, c \mid a^p=b^q=c^r=1,\ abc=1 \rangle.
\]
Such groups are called \emph{hyperbolic} (resp. \emph{parabolic}, \emph{elliptic}) if the number
\[
\chi= \chi(p,q,r)= \frac{1}{p} + \frac{1}{q} + \frac{1}{r}
\]
is $<1$ (resp. $=1$, $>1$). Depending on the type $D(p,q,r)$ can be embedded (uniquely up to conjugation) as a discrete cocompact subgroup of isometries of hyperbolic plane (if $\chi<1$), a discrete cocompact subgroup of the group $\Aff({\C})$ of complex affine transformations of $\C$ (if $\chi=1$), or is finite and embeds in the group of isometries of the 2-sphere (if $\chi>1$).

Let $\Ga_k$ ($k=1,2$) be two discrete elementary subgroups of $\Aff(\C)<\PSL(2,\C)$ isomorphic to parabolic von Dyck groups
\[
D(p_k, q_k, r_k)\; (k=1,2).
\]
These groups consist of elliptic and parabolic elements and are virtually free abelian of rank 2; hence they cannot be contained in a convex-cocompact group. Subgroups of $\PSL(2,\C)$ isomorphic to von Dyck groups are (locally) rigid. One can choose embeddings of the groups $\Ga_1$ and $\Ga_2$ into $\PSL(2,\C)$ such that they generate a free product
\[
\Ga= \Ga_1\star \Ga_2< \PSL(2,\C),
\]
which is geometrically finite and every parabolic element of $\Ga$ is conjugate into one of the free factors. (The group $\Ga$ is obtained via the Klein combination of $\Ga_1$ and $\Ga_2$, see \cite{Kapovich-book}*{\S 4.18} for example). The discontinuity domain $\Omega$ of $\Ga$ in $\p^1(\C)$ is connected and the quotient orbifold ${\mathcal O}= \Omega/\Ga$ is a sphere with six cone points of the orders $p_i$, $q_i$ and $r_i$ ($i=1,2$).

Being geometrically finite, the group $\Ga$ is relatively stable (relative its parabolic elements): let
\[
\Hom_{\mathrm{par}}(\Ga,\PSL(2,\C))
\]
denote the \emph{relative representation variety}, which is the subvariety in the representation variety defined by the condition that images of parabolic elements of $\Ga$ are again parabolic. Let $\iota_{\Ga}:\Ga\to\PSL(2,\C)$ denote the identity embedding. Then there is a small neighborhood $U$ of $\iota_{\Ga}$ in $\Hom_{\textup{par}}(\Ga,\PSL(2,\C))$ which consists entirely of faithful geometrically finite representations which are, moreover, given by quasiconformal conjugations of $\Ga$. Since the subgroups $\Ga_1$ and $\Ga_2$ are rigid, there is a neighborhood $V$ of $\iota_{\Ga}$ such that
\[
V\cap\Hom_{\mathrm{par}}(\Ga,\PSL(2,\C)) = V\cap\Hom(\Ga,\PSL(2,\C)). 
\]
It follows that the action of $\Ga$ on its limit set is structurally stable in $\PSL(2,\C)$, in particular, algebraically stable. However, $\Ga$ is not convex-cocompact. Lastly, the group $\Ga$ is not rigid, the (complex) dimension of the character variety
\[
X(\Ga,\PSL(2,\C))\sslash\PSL(2,\C)
\]
near $[\iota_\Ga]$ equals the (complex) dimension of the Teichm\"uller space of ${\mathcal O}$, which is $3$.
\end{example}

On the other hand, one can show that if $\Ga<\PSL(2,\C)$ is a finitely generated discrete subgroup which is not a lattice and contains no parabolic von Dyck subgroups, then algebraic stability of $\Ga$ implies quasiconvexity of $\Ga$. We refer the reader to the paper by Matsuzaki \cite{Mat} for the precise description of structurally stable finitely generated Kleinian subgroups of $\PSL(2,\C)$.

\begin{bibdiv}
\begin{biblist}[\normalsize]

\bib{BGS}{book}{
   author={Ballmann, Werner},
   author={Gromov, Mikhael},
   author={Schroeder, Viktor},
   title={Manifolds of nonpositive curvature},
   series={Progress in Mathematics},
   volume={61},
   publisher={Birkh\"auser Boston, Inc., Boston, MA},
   date={1985},
   pages={vi+263},
   isbn={0-8176-3181-X},
}

\bib{Bar}{article}{
   author={Barbot, Thierry},
   title={Three-dimensional Anosov flag manifolds},
   journal={Geom. Topol.},
   volume={14},
   date={2010},
   number={1},
   pages={153--191},
   issn={1465-3060},
}

\bib{BPS}{article}{
   author={Bochi, Jairo},
   author={Potrie, Rafael},
   author={Sambarino, Andr\'{e}s},
   title={Anosov representations and dominated splittings},
   journal={J. Eur. Math. Soc. (JEMS)},
   volume={21},
   date={2019},
   number={11},
   pages={3343--3414},
   issn={1435-9855},
}

\bib{Bowditch-gf}{article}{
   author={Bowditch, Brian H.},
   title={Geometrical finiteness for hyperbolic groups},
   journal={J. Funct. Anal.},
   volume={113},
   date={1993},
   number={2},
   pages={245--317},
   issn={0022-1236},
}

\bib{Bo}{article}{
   author={Bowditch, Brian H.},
   title={Spaces of geometrically finite representations},
   journal={Ann. Acad. Sci. Fenn. Math.},
   volume={23},
   date={1998},
   number={2},
   pages={389--414},
   issn={1239-629X},
}

\bib{Bowditch}{article}{
   author={Bowditch, Brian H.},
   title={A topological characterisation of hyperbolic groups},
   journal={J. Amer. Math. Soc.},
   volume={11},
   date={1998},
   number={3},
   pages={643--667},
   issn={0894-0347},
}

\bib{BH}{book}{
   author={Bridson, Martin R.},
   author={Haefliger, Andr\'e},
   title={Metric spaces of non-positive curvature},
   series={Grundlehren der Mathematischen Wissenschaften},
   volume={319},
   publisher={Springer-Verlag, Berlin},
   date={1999},
}

\bib{CEG}{article}{
   author={Canary, R. D.},
   author={Epstein, D. B. A.},
   author={Green, P.},
   title={Notes on notes of Thurston},
   book={
      series={London Math. Soc. Lecture Note Ser.},
      volume={111},
      publisher={Cambridge Univ. Press, Cambridge},
   },
   date={1987},
   pages={3--92},
}

\bib{CG}{article}{
   author={Choi, Suhyoung},
   author={Goldman, William},
   title={Topological tameness of Margulis spacetimes},
   journal={Amer. J. Math.},
   volume={139},
   date={2017},
   number={2},
   pages={297--345},
   issn={0002-9327},
}

\bib{Coo}{article}{
   author={Coornaert, Michel},
   title={Mesures de Patterson-Sullivan sur le bord d'un espace hyperbolique au sens de Gromov},
   journal={Pacific J. Math.},
   volume={159},
   date={1993},
   number={2},
   pages={241--270},
}

\bib{CP}{book}{
   author={Coornaert, Michel},
   author={Papadopoulos, Athanase},
   title={Symbolic dynamics and hyperbolic groups},
   series={Lecture Notes in Mathematics},
   volume={1539},
   publisher={Springer-Verlag, Berlin},
   date={1993},
   pages={viii+138},
}

\bib{Corlette}{article}{
   author={Corlette, Kevin},
   title={Hausdorff dimensions of limit sets. I},
   journal={Invent. Math.},
   volume={102},
   date={1990},
   number={3},
   pages={521--541},
   issn={0020-9910},
}

\bib{DGK}{article}{
   author={Danciger, Jeffrey},
   author={Gu\'eritaud, Fran\c{c}ois},
   author={Kassel, Fanny},
   title={Convex cocompact actions in real projective geometry},
   date={2017},
   journal={\tt arXiv:1704.08711 [math.GT]},
}

\bib{Deroin}{article}{
   author={Deroin, Bertrand},
   title={Locally discrete expanding groups of analytic diffeomorphisms of
   the circle},
   journal={J. Topol.},
   volume={13},
   date={2020},
   number={3},
   pages={1216--1229},
   issn={1753-8416},
}

\bib{DK}{book}{
   author={Dru\c{t}u, Cornelia},
   author={Kapovich, Michael},
   title={Geometric group theory},
   series={American Mathematical Society Colloquium Publications},
   volume={63},
   note={With an appendix by Bogdan Nica},
   publisher={American Mathematical Society, Providence, RI},
   date={2018},
   pages={xx+819},
   isbn={978-1-4704-1104-6},
}

\bib{Eber}{book}{
   author={Eberlein, Patrick B.},
   title={Geometry of nonpositively curved manifolds},
   series={Chicago Lectures in Mathematics},
   publisher={University of Chicago Press, Chicago, IL},
   date={1996},
   pages={vii+449},
   isbn={0-226-18197-9},
   isbn={0-226-18198-7},
}

\bib{Freden}{article}{
   author={Freden, Eric M.},
   title={Negatively curved groups have the convergence property. I},
   journal={Ann. Acad. Sci. Fenn. Ser. A I Math.},
   volume={20},
   date={1995},
   number={2},
   pages={333--348},
   issn={0066-1953},
}

\bib{Gromov}{article}{
   author={Gromov, M.},
   title={Hyperbolic groups},
   conference={
      title={Essays in group theory},
   },
   book={
      series={Math. Sci. Res. Inst. Publ.},
      volume={8},
      publisher={Springer, New York},
   },
   date={1987},
   pages={75--263},
}

\bib{GW}{article}{
   author={Guichard, Olivier},
   author={Wienhard, Anna},
   title={Anosov representations: domains of discontinuity and applications},
   journal={Invent. Math.},
   volume={190},
   date={2012},
   number={2},
   pages={357--438},
   issn={0020-9910},
}

\bib{Izeki}{article}{
   author={Izeki, Hiroyasu},
   title={Quasiconformal stability of Kleinian groups and an embedding of a
   space of flat conformal structures},
   journal={Conform. Geom. Dyn.},
   volume={4},
   date={2000},
   pages={108--119},
   issn={1088-4173},
}

\bib{Kapovich-book}{book}{
   author={Kapovich, Michael},
   title={Hyperbolic manifolds and discrete groups},
   series={Progress in Mathematics},
   volume={183},
   publisher={Birkh\"{a}user Boston, Inc., Boston, MA},
   date={2001},
   pages={xxvi+467},
   isbn={0-8176-3904-7},
}

\bib{KL18a}{article}{
   author={Kapovich, Michael},
   author={Leeb, Bernhard},
   title={Discrete isometry groups of symmetric spaces},
   conference={
      title={Handbook of group actions. Vol. IV},
   },
   book={
      series={Adv. Lect. Math. (ALM)},
      volume={41},
      publisher={Int. Press, Somerville, MA},
   },
   date={2018},
   pages={191--290},
}

\bib{KL18b}{article}{
   author={Kapovich, Michael},
   author={Leeb, Bernhard},
   title={Finsler bordifications of symmetric and certain locally symmetric
   spaces},
   journal={Geom. Topol.},
   volume={22},
   date={2018},
   number={5},
   pages={2533--2646},
   issn={1465-3060},
}

\bib{KLP14}{article}{
   author={Kapovich, Michael},
   author={Leeb, Bernhard},
   author={Porti, Joan},
   title={Morse actions of discrete groups on symmetric space},
   date={2014},
   journal={\tt arXiv:1403.7671 [math.GR]},
}

\bib{KLP16}{article}{
   author={Kapovich, Michael},
   author={Leeb, Bernhard},
   author={Porti, Joan},
   title={Some recent results on Anosov representations},
   journal={Transform. Groups},
   volume={21},
   date={2016},
   number={4},
   pages={1105--1121}
}

\bib{KLP17}{article}{
   author={Kapovich, Michael},
   author={Leeb, Bernhard},
   author={Porti, Joan},
   title={Anosov subgroups: dynamical and geometric characterizations},
   journal={Eur. J. Math.},
   volume={3},
   date={2017},
   number={4},
   pages={808--898},
   issn={2199-675X},
}

\bib{KLP18}{article}{
   author={Kapovich, Michael},
   author={Leeb, Bernhard},
   author={Porti, Joan},
   title={A Morse lemma for quasigeodesics in symmetric spaces and Euclidean
   buildings},
   journal={Geom. Topol.},
   volume={22},
   date={2018},
   number={7},
   pages={3827--3923},
   issn={1465-3060},
}

\bib{Lab}{article}{
   author={Labourie, Fran\c{c}ois},
   title={Anosov flows, surface groups and curves in projective space},
   journal={Invent. Math.},
   volume={165},
   date={2006},
   number={1},
   pages={51--114},
   issn={0020-9910},
}

\bib{MM}{article}{
   author={Mann, Kathryn},
   author={Manning, Jason Fox},
   title={Stability for hyperbolic groups acting on boundary spheres},
   date={2021},
   journal={\tt arXiv:2104.01269 [math.GT]},
}

\bib{Marden}{article}{
   author={Marden, Albert},
   title={The geometry of finitely generated Kleinian groups},
   journal={Ann. of Math. (2)},
   volume={99},
   date={1974},
   pages={383--462},
   issn={0003-486X},
}

\bib{Maskit}{book}{
   author={Maskit, Bernard},
   title={Kleinian groups},
   series={Grundlehren der Mathematischen Wissenschaften},
   volume={287},
   publisher={Springer-Verlag, Berlin},
   date={1988},
   pages={xiv+326},
   isbn={3-540-17746-9},
}

\bib{Mat}{article}{
   author={Matsuzaki, Katsuhiko},
   title={Structural stability of Kleinian groups},
   journal={Michigan Math. J.},
   volume={44},
   date={1997},
   number={1},
   pages={21--36},
   issn={0026-2285},
}

\bib{Series}{article}{
   author={Series, Caroline},
   title={Symbolic dynamics for geodesic flows},
   journal={Acta Math.},
   volume={146},
   date={1981},
   number={1-2},
   pages={103--128},
   issn={0001-5962},
}

\bib{ST}{article}{
   author={Stecker, Florian},
   author={Treib, Nicolaus},
   title={Domains of discontinuity in oriented flag manifolds},
   date={2018},
   journal={\tt arXiv:1806.04459 [math.DG]},
}

\bib{Sul}{article}{
   author={Sullivan, Dennis},
   title={Quasiconformal homeomorphisms and dynamics. II. Structural
   stability implies hyperbolicity for Kleinian groups},
   journal={Acta Math.},
   volume={155},
   date={1985},
   number={3-4},
   pages={243--260},
   issn={0001-5962},
}

\bib{Tukia}{article}{
   author={Tukia, Pekka},
   title={Convergence groups and Gromov's metric hyperbolic spaces},
   journal={New Zealand J. Math.},
   volume={23},
   date={1994},
   number={2},
   pages={157--187},
   issn={1171-6096},
}

\bib{Tukia98}{article}{
   author={Tukia, Pekka},
   title={Conical limit points and uniform convergence groups},
   journal={J. Reine Angew. Math.},
   volume={501},
   date={1998},
   pages={71--98},
   issn={0075-4102},
}

\bib{Yue}{article}{
   author={Yue, Chengbo},
   title={Dimension and rigidity of quasi-Fuchsian representations},
   journal={Ann. of Math. (2)},
   volume={143},
   date={1996},
   number={2},
   pages={331--355},
   issn={0003-486X},
}

\end{biblist}
\end{bibdiv}

\vspace{3\baselineskip}\noindent
Department of Mathematics,
University of California,
1 Shields Ave., Davis, CA 95616,
USA\\
\url{kapovich@math.ucdavis.edu}

\vspace{1\baselineskip}\noindent
Department of Mathematics,
Jeju National University,
Jeju 63243,
Republic of Korea\\
\url{sungwoon@jejunu.ac.kr}

\vspace{1\baselineskip}\noindent
Research Institute of Mathematics,
Seoul National University,
Seoul 08826,
Republic of Korea\\
\url{jaejeong@snu.ac.kr}

\end{document}